        \newcommand\texorpdfstring[2]{{#1}}
    \newtheorem{thm}{Theorem}[section]
    \newtheorem{cor}[thm]{Corollary}
    \newtheorem{prop}[thm]{Proposition}
    \newtheorem{lem}[thm]{Lemma}
    \newtheorem{dfn}[thm]{Definition}
    \newtheorem{rem}[thm]{Remark}
    \newtheorem{exm}[thm]{Example}
\newtheorem*{thm*}{Theorem}
    \newcommand\T[2][]{{\operatorname{id}_{#1}\!\left({#2}\right)}}
    \providecommand{\tr}{\mathop{\rm tr}\nolimits}
    \providecommand{\str}{\mathop{\rm str}\nolimits}
    \providecommand{\estr}{\mathfrak{str}}
    \providecommand{\Tr}{\mathop{\rm Tr}\nolimits}
    \providecommand{\sTr}{\mathop{\rm sTr}\nolimits}
    \providecommand{\esTr}{\mathfrak{sTr}}
    \providecommand{\Cent}{\mathop{\rm Cent}\nolimits}
    \providecommand{\sCent}{\mathop{\rm sCent}\nolimits}
    \providecommand{\esCent}{\mathfrak{sCent}}
    \providecommand{\ef}{\mathfrak{f}}
    \providecommand{\eF}{\mathfrak{F}}
    \providecommand{\eA}{\mathfrak{A}}
    \providecommand{\eB}{\mathfrak{B}}
    \providecommand{\eC}{\mathfrak{C}}
    \providecommand{\eL}{\mathfrak{L}}
    \newcommand\sgn[1]{{\mathop{\rm sgn}\left({#1}\right)}}
    \newcommand\esgn[2]{{\mathfrak{sgn}_{#1\!}\left({#2}\right)}}
    \providecommand{\chr}{\mathop{\rm char}\nolimits}
    \providecommand{\Tor}{\mathop{\rm Tor}\nolimits}
    \def\F{\mathbb{F}}
    \def\Z{\mathbb{Z}}
    \def\N{\mathbb{N}}
    \def\freeG{S}
    \def\G{\mathfrak{G}}
    \def\efreeG{\mathfrak{S}}
    \newcommand\M[1][n]{{\operatorname{M}_{#1}}}
    \def\e{\varepsilon}
    \def\l{\ell}
    \def\co{{\,:\,}}
    \def\suchthat{\,|\,}
    \newcommand\sg[1]{{\left<{#1}\right>}}
    \newcommand\ideal[1]{{\left<{#1}\right>}}
    \def\({\left(}
    \def\){\right)}
    \newcommand\super{{$\Sigma$-super}} 
    \newcommand\set[1]{{\left\{{#1}\right\}}}
    \def\ra{{\rightarrow}}
    \def\s{{\sigma}}
    \newcommand\eq[1]{{(\ref{#1})}}
    \def\normali{\triangleleft}
    \newcommand\Eqref[1]{{\eqref{#1}}}
    \newcommand\Exref[1]{{Example~\ref{#1}}}
    \newcommand\Tref[1]{{Theorem~\ref{#1}}}
    \newcommand\Lref[1]{{Lemma~\ref{#1}}}
    \newcommand\Dref[1]{{Definition~\ref{#1}}}
    \newcommand\Pref[1]{{Proposition~\ref{#1}}}
    \newcommand\Cref[1]{{Corollary~\ref{#1}}}
    \newcommand\Rref[1]{{Remark~\ref{#1}}}
    \newcommand\len{{\operatorname{len}}}
\newcommand\INTROTHM[2]{{\begin{thm*}[#1] {#2} \end{thm*}}}
    \def\ThmTraceEquiv{{Suppose that $2$ is invertible in $C$. Let $A$ be some $C$-algebra with trace $\tr$. Let $\estr$ be the associated \super{}trace of $A\otimes_C \efreeG$, and in a similar manner, associate a supertrace $\str$ to $A\otimes_C \freeG$, where $\freeG$ is the free supercommutative algebra.
    Then the supertrace identities of $A\otimes_C \freeG$ are the same as the \super{}trace identities of $A\otimes_C\efreeG$, with $\esTr$ replaced by $\sTr$}}
\begin{document}
        \title[The Generalized Grassmann Algebra]%
            {The Grassmann algebra in arbitrary characteristic and generalized sign
}
        \author{Gal Dor, Alexei Kanel-Belov and Uzi Vishne}

\renewcommand{\subjclassname}{
      \textup{2010} Mathematics Subject Classification}
\subjclass{Primary 16R10; Secondary 17A70, 16R30, 16R50}

        \email{dorgal111@gmail.com, beloval@cs.biu.ac.il, vishne@math.biu.ac.il}
\address{
          Department of Mathematics, Bar-Ilan University, 52900 Ramat-Gan, Israel
}

        \thanks{This work was supported by BSF grant \#2010/149 and ISF grant \#1207/12.}
        \date{\today}

        \keywords{Superalgebra; Generalized Grassmann algebra; generalized sign; polynomial identities; trace identities}

        \maketitle

        \begin{abstract}
            We define a generalization $\G$ of the Grassmann algebra $G$ which is well-behaved over arbitrary commutative
            rings $C$, even when $2$ is not invertible. In particular, this enables us to define a notion of superalgebras that does not become
            degenerate in such a setting. Using this construction we are able to provide a basis of the non-graded multilinear identities of the free
            superalgebra with supertrace, valid over any ring.

            We also show that all identities of $\G$ follow from the Grassmann identity, and explicitly give its co-modules, which turn
            out to be generalizations of the sign representation. In particular, we show that the co-module is a free $C$-module of rank
            $2^{n-1}$.
        \end{abstract}

    \section{Introduction and Notation}
    \numberwithin{thm}{section}

    Algebras are associative, but not necessarily unital.
    The base ring $C$ will always be commutative and unital. We will assume nothing about the characteristic of $C$, except where explicitly stated.

    Let $A$ be an algebra over $C$, and let $C\sg{X}$ be the free (associative) algebra over a countable infinite alphabet $X$. A polynomial $f(x_1,\dots,x_n)\in C\sg{X}$ is an \emph{identity} of $A$ if for
    all substitutions $a_1,\dots,a_n\in A$, we have that $f(a_1,\dots,a_n)=0$. We let:
    \[
        \T{A}=\{f\in C\sg{X} \suchthat\text{$f$ is an identity of A}\}.
    \]
    An algebra satisfying some non-zero identity with at least one invertible coefficient is called a {\emph{PI-algebra}}.

    Obviously, $\T{A}$ is an ideal of $C\sg{X}$, which is invariant under substitutions. For any ring $R$, a \emph{T-ideal} is an ideal $I \normali R$
    such that $\tau(I)\subseteq I$ for every
    endomorphism $\tau$ of $R$. We will implicitly assume throughout that all T-ideals
    are T-ideals of $C\sg{X}$.
    With this terminology, $\T{A}$ is a T-ideal for every algebra $A$.

    Given that an algebra $A$ over an infinite field $C=\F$ satisfies
    an identity $f$, it is always possible to break $f$ down into its
    multi-homogenous components, by multiplying each variable by
    suitable scalars, and using a standard Vandermonde-type argument.
    Furthermore, in characteristic $0$, one can multilinearize any
    identity to an equivalent multilinear identity. Thus, in
    characteristic $0$ over a field, any T-ideal is generated by its
    multilinear part.

    Because of this, one considers the spaces
    \begin{equation}\label{Pndef}
    P_n=\{\sum_{\sigma\in S_n}\alpha_\sigma x_{\sigma(1)}x_{\sigma(2)}
    \cdots x_{\sigma(n)}\suchthat\alpha_\sigma\in C\}\end{equation} of
    multilinear polynomials in the variables $x_1,\dots,x_n$. This
    space has the structure of an $S_n$-module by defining:
    \[
        \tau\cdot x_{\sigma(1)}x_{\sigma(2)}\cdots x_{\sigma(n)}=x_{\tau\sigma(1)}x_{\tau\sigma(2)}\cdots x_{\tau\sigma(n)}.
    \]
    With the above definition, $C[S_n]\cong P_n$ as $S_n$-modules, with an isomorphism given by:
    $\sigma\mapsto x_{\sigma(1)}x_{\sigma(2)}\cdots x_{\sigma(n)}$.

    The multilinear part of degree $n$ of a T-ideal $\Gamma$ is given
    by $\Gamma\cap P_n$, which is an $S_n$-submodule of $P_n$. The
    quotient $P_n/{\Gamma\cap P_n}$ is called the \emph{$n$-th
    co-module} of $\Gamma$, and (in case $C=\F$ is a field) $c_n =
    \dim P_n/{\Gamma \cap P_n}$ is the \emph{$n$-th co-dimension}.

    The \emph{Specht problem} asks whether T-ideals are always \emph{finitely based}, namely generated as a T-ideal by some finite set. The Specht problem has been answered negatively for the analogous cases of groups and Lie algebras, 
    which made the following result by Kemer \cite[theorem~2.4]{kemer_monograph} quite surprising:

    \begin{thm}[Specht Property for algebras over fields] \label{thm:Specht_in_char_0}
    Let $A$ be an (associative) algebra over a field $C = \F$ of characteristic zero. Then the T-ideal $\T{A}$ is finitely based.
    \end{thm}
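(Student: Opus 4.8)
The plan is to follow Kemer's strategy. Over a field of characteristic zero a T-ideal $\Gamma$ is, as recalled above, generated by its multilinear parts $\Gamma\cap P_n$, so $\Gamma$ is finitely based if and only if it is generated as a T-ideal by $\Gamma\cap(P_1\oplus\cdots\oplus P_N)$ for some $N$; equivalently, it suffices to prove that T-ideals of $\F\sg{X}$ satisfy the ascending chain condition. The argument then splits into a structural reduction to finite-dimensional (super)algebras and an asymptotic induction handling those.

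For the reduction one uses superalgebras and the Grassmann envelope $E(A)=A_0\otimes G_0\oplus A_1\otimes G_1$, where $A=A_0\oplus A_1$ is $\Z/2$-graded and $G=G_0\oplus G_1$ is the Grassmann algebra with its natural grading. Two facts are needed: first, Kemer's theorem that every affine PI-algebra over $\F$ is PI-equivalent to a finite-dimensional algebra; and second — the deeper statement — that \emph{every} T-ideal equals $\T{E(A)}$ for some finite-dimensional superalgebra $A$ (over a suitable extension of $\F$), together with the observation that identities of $E(A)$ are controlled by the $\Z/2$-graded identities of $A$. This is what sidesteps the failure of representability for general relatively free PI-algebras: the ``infinite-dimensional'' phenomena (as in the Grassmann algebra itself) are pushed into the grading. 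One is thereby reduced to proving a finite basis for the graded identities of a finite-dimensional (super)algebra.

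The engine for the finite-dimensional case is the representation theory of $P_n$ over $\Q$. For a proper T-ideal the codimensions are exponentially bounded (Regev) and the cocharacter $\chi_n(\Gamma)=\sum_{\lambda\vdash n}m_\lambda\chi_\lambda$ has polynomially bounded multiplicities, with the partitions $\lambda$ that occur confined to a hook. From this one extracts the \emph{Kemer index} $\operatorname{Ind}(\Gamma)=(\beta(\Gamma),\gamma(\Gamma))\in\N\times\N$, ordered lexicographically, which records how many pairwise disjoint ``large alternating layers'' a polynomial outside $\Gamma$ can carry; the polynomials realizing the optimum are the \emph{Kemer polynomials} of $\Gamma$, built from Capelli-type alternators. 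The proof is then an induction on $\operatorname{Ind}(\Gamma)$: one finds a finitely based $\Gamma_0\subseteq\Gamma$ with $\operatorname{Ind}(\Gamma_0)=\operatorname{Ind}(\Gamma)$ (possible because the index is already visible in bounded degree), and, using Kemer's two lemmas on Kemer polynomials — in particular the ``Phoenix property'', that the T-ideal of consequences of a Kemer polynomial again contains a Kemer polynomial — one shows that $\Gamma$ is obtained from $\Gamma_0$ by adjoining finitely many identities together with a T-ideal of strictly smaller Kemer index, to which the inductive hypothesis applies.

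I expect the main obstacle to be the representability statement of the second paragraph: manufacturing, out of the purely asymptotic data of $\Gamma$, an actual finite-dimensional superalgebra with the prescribed identities. This requires the trace-ring construction and Shirshov's height theorem to produce a candidate model that is finite over a Noetherian center, a verification that this passage preserves the Kemer index, and the combinatorial bookkeeping that keeps Kemer polynomials under control when T-ideals are summed or specialized. These are exactly the places where characteristic zero is used — through multilinearization, Vandermonde separation of multihomogeneous components, and semisimplicity of $\F[S_n]$ — so the positive-characteristic version is genuinely harder, which is part of what motivates the constructions of the present paper.
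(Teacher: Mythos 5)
This theorem is not proved in the paper at all: it is Kemer's Specht theorem, quoted with a citation to \cite[theorem~2.4]{kemer_monograph}, and the only indication of its proof given here is the one-sentence description of the reduction first to T-ideals of affine algebras and then to T-ideals of finite-dimensional algebras. Your outline is consistent with that description and with the standard account of Kemer's argument: passage from an arbitrary T-ideal to the graded identities of a finite-dimensional superalgebra via the Grassmann envelope (the superrepresentability theorem, which appears in this paper as \Tref{thm:kemer_superrepresentability}), representability of affine PI-algebras by finite-dimensional algebras, and an induction on the Kemer index driven by Kemer polynomials and the Phoenix property. As a roadmap it is accurate and follows the same route the paper attributes to Kemer, so there is no divergence of approach to report.

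It is not, however, a proof, and you should be explicit about that. Every load-bearing step is invoked rather than established: representability of affine PI-algebras, superrepresentability, Regev's exponential codimension bound and the confinement of cocharacters to a hook, the two Kemer lemmas on disjoint alternating layers, and the Phoenix property are each substantial theorems whose proofs occupy most of Kemer's monograph. You correctly flag representability as the main obstacle, but the inductive step also hides real work that your sketch does not address: one must show that the Kemer index of a T-ideal is already detected in bounded degree, and that adjoining the consequences of finitely many Kemer polynomials to a suitable finitely based $\Gamma_0$ strictly lowers the index of what remains; both require the Wedderburn--Malcev structure of finite-dimensional algebras over $\F$ and careful control of how alternating sets interact with the radical. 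Since the paper itself only cites the result, the honest analogue of its ``proof'' is a citation; if the goal is a self-contained argument, each of the ingredients above must be supplied, and none of them is routine.
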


    This positive answer to the Specht problem in characteristic zero does not extend well to other
    characteristics, and has in fact been disproved for all non-zero characteristics. Additionally,
    there is no known method of actually finding the finite basis of the identities of a given algebra,
    and in fact, there are only a few natural cases where a complete basis of identities is known; even a basis
    for the identities of the matrix algebra of degree $3$ is unknown.

    Kemer proved his theorem via a series of reductions, first to the case of
    the T-ideal of identities of an affine algebra, and then it was shown that any T-ideal of identities of an affine algebra
    is also the T-ideal of identities of a finite-dimensional algebra.

    One concept of vital importance in the proof of \Tref{thm:Specht_in_char_0} is the \emph{Grassmann algebra}.
    The Grassmann algebra $G$ over a field $\F$ where $\chr{\F}\neq 2$ is the algebra generated by a countable set of generators
    $e_1,e_2,\dots$ under the relations:
    \begin{equation} \label{eq:comm_rel_for_grass_algebra}
        e_i e_j=-e_j e_i.
    \end{equation}

    \begin{rem} \label{rem:grassmann_identity}
        PI-theory in characteristic zero has quite a lot of information on $G$. For instance, it is known
        that when $\F$ is infinite, $\T{G}$ is generated by the single identity $[x,[y,z]]=0$ (this identity is known as the \emph{Grassmann
        identity}). Also, it is known that the co-dimension sequence of $G$ is exactly $c_n=2^{n-1}$. This
        result is obtained by first applying a combinatoric argument showing that the identity $[x,[y,z]]=0$
        has enough consequences to reduce the co-dimension to be $c_n\leq2^{n-1}$, and then using the
        representation theory of $S_n$ to show that it is bounded from below by the same amount.
    \end{rem}

    The structure of $G$ is related to the notion of \emph{superalgebras}: an algebra $A=A_0\oplus A_1$ satisfying
    $A_0 A_0\subseteq A_0$, $A_1 A_1\subseteq A_0$, $A_0 A_1\subseteq A_1$ and $A_1 A_0\subseteq A_1$ is called a
    \emph{superalgebra}. The subalgebra $A_0$ is called its \emph{even} part, and the $A_0$-module $A_1$ is called
    its \emph{odd} part. Additionally, the splitting $A=A_0\oplus A_1$ is referred to as the grading (or $\Z/2\Z$-grading) of $A$.
    Note that when we refer to a superalgebra $A$, we are actually referring to a specific grading $A=A_0\oplus A_1$,
    because in general there are many possible such gradings. An element $x$ in $A_0$ or $A_1$ is called \emph{homogenous}, and we
    let $|x|=0$ if $x\in A_0$ and $|x|=1$ if $x\in A_1$.

    The structure of $G$ now becomes trivial with respect to the following grading: we give $G$ the structure of
    a superalgebra by setting $G=G_0\oplus G_1$, where $G_0$ is the space spanned by all words of even length in
    the generators $e_1,e_2,\dots$ and $G_1$ is the space spanned by words of odd length.

    In general, if $A=A_0\oplus A_1$ is a superalgebra, then for all $x,y\in A$, let $x=x_0+x_1$, $y=y_0+y_1$, where
    $x_0,y_0\in A_0$, $x_1,y_1\in A_1$, be their decomposition into even and odd parts. Then define the
    \emph{supercommutator} of $x$ and $y$ by:
    \begin{equation*}
        \{x,y\}=[x_0,y_0]+[x_1,y_0]+[x_0,y_1]+(x_1 y_1+y_1 x_1),
    \end{equation*}
    where $[a,b]$ is the ordinary commutator. That is, when $x$ and $y$ are homogenous:
    \[
        \{x,y\}=xy-(-1)^{|x|\cdot |y|}yx.
    \]
    If the supercommutator of $x$ and $y$ is zero for all $x,y\in A$, then we
    say that $A$ is \emph{supercommutative}. Then with respect to the grading defined above, $G$ becomes supercommutative.

    One defines the \emph{free supercommutative algebra} $\freeG$ over $C$ as the superalgebra generated by countably many even generators $y_1,y_2,y_3,\dots$ and countably many odd generators $z_1,z_2,z_3,\dots$ whose only relations are $\{x_1,x_2\}=0$ for every $x_1,x_2 \in \freeG$. Note that $\freeG \cong  C[y_1,\dots] \otimes_C G$ as superalgebras, with the isomorphism given by $y_i \mapsto y_i \otimes 1$ and $z_i \mapsto 1\otimes e_i$. In particular when $C$ is an infinite field, $\T{S} = \T{G}$.

    One can build a theory of super linear algebra, with supertraces denoted by $\str$, superdeterminants (also known
    as Berezians) etc. (see \cite{notes_on_supertheory_bernstein,kantor_trish_det_in_supercase}). We merely note that the basic axiom of traces, $\tr{[a,b]}=0$, becomes, in the case of the supertrace, $\str{\{a,b\}}=0$ where $\{a,b\}$ is the
    supercommutator of $a$ and $b$. So, for example,

    \begin{dfn}
        If $A$ is any algebra with trace $\tr$, then the algebra $A\otimes G$ inherits the grading of $G$,
        and the function $\str(a\otimes w)=\tr(a)\otimes w$ becomes a supertrace. We will refer to this as the
        \emph{supertrace associated with $A\otimes G$}.
    \end{dfn}

    \begin{rem}
        This is a supertrace because of the easily verified fact that
        \[
            \{a\otimes w,b\otimes u\}=[a,b]\otimes wu,
        \]
        for all $a,b\in A$ and $u,w\in G$.
    \end{rem}

    In other words, tensoring by $G$ turns algebras into superalgebras, commutators into supercommutators, and traces into supertraces. The role of $G$ and superalgebras in general in PI-theory is best illustrated by the following
    deep theorem of Kemer, which reduces the study of arbitrary PI-algebras in characteristic $0$ to the study
    of finite-dimensional PI-superalgebras.

    \begin{thm}[Kemer's Superrepresentability Theorem] \label{thm:kemer_superrepresentability}
        For any algebra $A$ over a field of characteristic $0$, there is some finite-dimensional superalgbra $B$
        such that $\T{A}=\T{G[B]}$, where $G[B]=(G_0\otimes B_0)\oplus(G_1\otimes B_1)$ is the
        \emph{Grassmann hull} of $B$.
    \end{thm}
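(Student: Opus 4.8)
The plan is to carry out Kemer's program, since the statement is exactly his superrepresentability theorem. We may assume $A$ is a PI-algebra, as the finiteness of $B$ forces $\T{A}$ to be a proper T-ideal. As explained above, over a field of characteristic $0$ the ideal $\Gamma=\T{A}$ is recovered from its multilinear components $\Gamma\cap P_n$, so the entire argument can be phrased in terms of the $S_n$-modules $P_n$ and the representation theory of $S_n$; this is where characteristic $0$ is essential. The central invariant is the \emph{Kemer index} of $\Gamma$: working with multilinear non-identities, one antisymmetrizes prescribed pairwise-disjoint sets of variables, and the index is the pair $(t,s)$, where $t$ is the largest size of alternating sets that can occur with unbounded multiplicity in a non-identity of $A$, and $s$ is the maximal number of alternating sets of the critical size $t+1$ that can occur simultaneously. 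A multilinear non-identity realizing both extremes is a \emph{Kemer polynomial} for $\Gamma$. The first task is to establish existence of Kemer polynomials and the \emph{Phoenix property}: any non-identity obtained by substitution into a Kemer polynomial again contains a Kemer polynomial in its consequences. These are consequences of the combinatorics of alternation together with the semisimplicity of $C[S_n]$.

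The finiteness engine is the representability theorem for affine algebras: every finitely generated PI-algebra over a field of characteristic $0$ is PI-equivalent to a finite-dimensional algebra. I would prove this by showing that the relatively free affine algebra embeds into a matrix algebra over a commutative Noetherian ring. The two ingredients are Shirshov's height theorem, which presents the algebra as a finite module over a finitely generated commutative subring, and the Razmyslov--Zubrilin trace identities, which supply enough central (trace) elements to force a faithful finite-dimensional representation. Granting this, any $\Gamma$ of finite Kemer index is the T-ideal of identities of some finite-dimensional algebra $\widetilde B$.

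It remains to install the $\Z/2\Z$-grading and pass through the Grassmann hull. Decompose $\widetilde B=\widetilde B_{\mathrm{ss}}\oplus J$ into its semisimple part and radical, and, guided by the Kemer index, assign each simple block and each radical layer to an even or odd component so as to obtain a finite-dimensional superalgebra $B=B_0\oplus B_1$ whose Grassmann hull $G[B]=(G_0\otimes B_0)\oplus(G_1\otimes B_1)$ has Kemer index exactly equal to that of $\Gamma$. By construction one always has $\Gamma\subseteq\T{G[B]}$, and the reverse inclusion is proved by induction on the index, choosing $B$ of minimal index: if the inclusion were strict, a polynomial in $\T{G[B]}\setminus\Gamma$ would, via the Phoenix property, produce a Kemer polynomial of $\Gamma$ that is a non-identity of a Grassmann hull of strictly smaller index, contradicting minimality. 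This completes the construction of $B$ with $\T{A}=\T{G[B]}$.

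I expect two places to be the main obstacles. The deepest analytic ingredient is the representability theorem of the second paragraph: extracting genuine finite-dimensionality from an abstract T-ideal requires the full force of the Shirshov and Razmyslov machinery and is the technical heart of the whole theorem. The most delicate combinatorial point is the index-matching construction of $B$ together with the termination of the induction in the third paragraph, where one must verify that lowering the Kemer index is well-founded and that the Phoenix property genuinely forces the contradiction; controlling how semisimple blocks and radical layers distribute between $B_0$ and $B_1$ so that $G[B]$ has precisely the target index is where the super structure, and hence the Grassmann hull, is indispensable.
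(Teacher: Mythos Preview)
First, a framing point: in the paper this theorem is quoted from Kemer as background and is not proved there, so there is no ``paper's own proof'' to compare against. What you have written is an outline of Kemer's program, and it should be judged on its own merits.

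There is a genuine structural gap in your plan. You invoke the affine representability theorem (``every finitely generated PI-algebra over a field of characteristic $0$ is PI-equivalent to a finite-dimensional algebra'') and then conclude that ``any $\Gamma$ of finite Kemer index is the T-ideal of identities of some finite-dimensional algebra $\widetilde B$'', after which you propose to grade $\widetilde B$. This step fails: $\Gamma=\T{G}$ is a proper PI T-ideal which is \emph{not} $\T{\widetilde B}$ for any finite-dimensional $\widetilde B$, since every finite-dimensional algebra satisfies a standard identity while $G$ satisfies none. Equivalently, $G$ is not PI-equivalent to any affine algebra, so the affine representability theorem simply does not apply to an arbitrary $\Gamma$. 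Your subsequent claim ``by construction one always has $\Gamma\subseteq\T{G[B]}$'' is also unsupported once $\Gamma=\T{\widetilde B}$ is unavailable.

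The repair is to reverse the order of the two reductions. In Kemer's argument one first passes to the graded setting: using the correspondence between ordinary T-ideals and $\Z/2\Z$-graded $T_2$-ideals furnished by the Grassmann hull (the involution $f\mapsto f^*$ on multilinear polynomials), one replaces $\Gamma$ by a $T_2$-ideal $\Gamma_2$. One then proves the \emph{super} analogue of affine representability, namely that $\Gamma_2$ is the $T_2$-ideal of a finite-dimensional superalgebra $B$; the Kemer index, Kemer polynomials, and the Phoenix property you describe are the right tools, but they must be developed for superpolynomials with even and odd variables, and the index is a pair attached to the graded situation. Finally one translates back through the Grassmann hull to obtain $\T{A}=\T{G[B]}$. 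In short: grade first, then reduce to finite dimension; your outline attempts the opposite order, and that is where it breaks.
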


    The main problem with $G$ is that it cannot be easily generalized to arbitrary
    characteristics. In particular, in characteristic $2$ the relation \Eqref{eq:comm_rel_for_grass_algebra} implies that
    the algebra is commutative. For this reason, \cite{ces} came up with the following algebra, which was the basis for Belov's counterexample to the Specht problem in characteristic $2$ (see \cite[p.~204]{comp_aspects} for details):

    \begin{dfn} \label{def:G_+}
        Define the extended Grassmann algebra $G^+$ over a field $\F$ of characteristic $2$ as the algebra generated by
        elements $e_1,e_2,\dots$ and elements $\e_1,\e_2,\dots$ such that the $\e_i$ are central, and such that
        the following relation is satisfied:
        \begin{equation*}
            [e_i,e_j]=\e_i \e_j e_i e_j,
        \end{equation*}
        in addition to the relation:
        \begin{equation*}
            \e_i^2=0.
        \end{equation*}
        So, in fact, $G^+$ is an algebra over the local algebra $\F[\e_1,\e_2,\dots]$.
    \end{dfn}

    This algebra was used to produce counterexamples in characteristic $2$, such as constructing a T-ideal that is not finitely based (see for example \cite[p.~210, example~7.22]{comp_aspects}),
    as well as to investigate the T-space structure of the relatively free algebra generated by the Grassmann identity \cite{T_space_rels_of_Lie_nilp,T_space_multiplicative_structure_of_Grassmann,T_space_monomiality_equalization}.

    \begin{rem}
        The reason that this algebra is referred to as the extended Grassmann algebra is first of all that
        it is defined by relations similar to those that define the Grassmann algebra, and that its ideal of
        identities $\T{G^+}$ is generated by the same identity as the Grassmann algebra, $[x,[y,z]]=0$ (see
        \Rref{rem:grassmann_identity}).
    \end{rem}

    The main disadvantage of $G^+$ is that it is only non-degenerate in characteristic $2$, and superficially looks very
    different from the ordinary Grassmann algebra $G$. Therefore, our aim in this work is to present and study a version
    of the Grassmann algebra that is well-behaved over arbitrary commutative rings, which we denote as $\G$. We show that $\G$ possesses properties similar to the ordinary Grassmann algebra $G$,
    and generalize various theorems regarding $G$ over fields of characteristics $p\neq 2$ to theorems regarding $\G$ over rings of any characteristic.

The similarity to $G$ is demonstrated by the following two results:

\INTROTHM{\Tref{thm:all_idents_of_G_are_consequences_of_grassmann}}{
        Let $\G$ be the generalized Grassmann algebra defined over $C$. Then $\T{\G}$ is generated as a T-ideal by the Grassmann
        identity, $[x,[y,z]]=0$.}

\INTROTHM{\Tref{thm:G_and_extended_G_equivalence}}
{Suppose that $2$ is invertible in $C$. Let $A$ be some $C$-algebra. Then
        $\T{A\otimes_C S}=\T{A\otimes_C\G}$. In particular, $\T{\M[n](S)}=\T{\M[n](\G)}$.}

    And as a corollary, we have:

    \begin{cor}
        Suppose that $2$ is invertible in $C$. Then the ideal of identities of the free supercommutative algebra, $\T{\freeG}$,
        is generated as a T-ideal by the Grassmann identity.
    \end{cor}

    Next, we present a generalization of the notion of signs of permutations that is associated with $\G$ in much the same way ordinary signs are
    associated with the ordinary Grassmann algebra $G$. We refer to this generalization as the generalized sign representation, and show
    that the generalized sign representation is actually the whole co-module of $\G$, over any ring: The $S_n$-module of generalized signs $C[\e]_n$ over a ring $C$ is the $n$-th co-module of $\G$ (\Tref{thm:comodule_of_G}). Furthermore, we compute and show that the $n$-th co-module of $\G$ over a ring $C$ is a free $C$-module by the induced action of $C$, of rank $2^{n-1}$ (\Tref{thm:codim_of_G}). This generalizes the well known result that the co-dimension sequence of $G$ (in characteristic not $2$) is $c_n(G) = 2^{n-1}$.

    We continue to define a notion of generalized superalgebras, generalized Grassmann hulls and generalized supertraces (to which we refer
    as \super{}algebras and \super{}traces for brevity). The free \super{}algebra  $\efreeG$ is defined in \Exref{exm:efreeG}.  For the reader's convenience, let us collect here the notation used for the four objects studied and compared in this paper:
    \smallskip
    \begin{center}
        \begin{tabular}{c | c c}
        {} & superalgbera & \super{}algebra \\
        \hline
        Grassmann & $G$ & $\G$ \\
        free commutative & $\freeG$ & $\efreeG$
        \end{tabular}
    \end{center}
    \smallskip

    It is shown that when $2$ is invertible, these notions coincide with the notions of ordinary supertheory:

\INTROTHM{\Tref{thm:supertrace_and_extended_supertrace_equivalence}}{
\ThmTraceEquiv.
}

    The next question is what properties do supertraces
    (and more generally, \super{}traces) satisfy. Thus we turn our attention to the question of ungraded identities satisfied
    by supertraces. We find:

\INTROTHM{\Tref{thm:basis_of_identities_of_free_str}}
{The multilinear part of the ideal of identities of the free \super{}algebra with \super{}trace (over any ring) is generated
        by:
        \begin{eqnarray*}
            \esTr(\esTr(x)y)& = & \esTr(x)\esTr(y),\\
            \esTr(x\esTr(y))& = & \esTr(x)\esTr(y),\\
            {}[x,\esTr{[y,z]}] & = & 0,\\
            {}[\esTr(x),[\esTr(y),z]] & = & 0.
        \end{eqnarray*}
}

        \def\separatepaper{}

    \section{The Generalized Grassmann Algebra}

    The standard Grassmann algebra $G$ is well behaved in characteristic not $2$, while the generalized Grassmann algebra $G^+$ is defined in characteristic $2$. Our first objective is to combine the two objects into an algebra defined over an arbitrary (commutative) ring, in a way which is amenable to reductions and inverse limits.

    Starting from the relations $[e_i,e_j]=\e_i \e_j e_i e_j$ of
    \Dref{def:G_+}, we immediately obtain $-\e_i \e_j e_i
    e_j=-[e_i,e_j]=[e_j,e_i]=\e_i \e_j e_j e_i=\e_i
    \e_j(1-\e_i\e_j)e_i e_j$, which will be satisfied by requiring
    $-\e_i \e_j=\e_i \e_j(1-\e_i\e_j)$, or equivalently,
    $$\e_i^2\e_j^2=2\e_i\e_j.$$
    This observation motivates the following definition.
    \begin{dfn} \label{def:C_epsilon}
    We denote by $C[\e]$ the commutative ring $C[\e] = C[\theta,\e_1,\e_2,\dots]$, subject to the relations
      \[
        \e_i^2 = \theta\e_i.
      \]
        and
      \[
            \theta^2 = 2.
      \]
    \end{dfn}

    \begin{dfn} \label{def:generalized_extended_grassmann}
    	The \emph{generalized} Grassmann algebra $\G$ over $C$ is the unital algebra generated by
    	elements $e_1,e_2,\dots$ over the central subring $C[\e] = C[\theta,\e_1,\e_2,\dots]$ defined above, subject to the relations
    	\begin{equation} \label{comm_rel_for_ext_grass_algebra}
    		[e_i,e_j]=\e_i\e_j e_i e_j
    	\end{equation}
    	for every $i,j$ (in particular $\theta\e_i e_i^2 = \e_i^2 e_i^2 = 0$).
    \end{dfn}

    The following version of \eq{comm_rel_for_ext_grass_algebra} will be frequently used:
    \begin{equation} \label{rem:sign_of_transposition}
    	e_j e_i=(1-\e_i\e_j)e_i e_j.
    \end{equation}

    \begin{rem}
    	The elements $e_j^2$ are central, as
    	\[
    		e_j^2e_i =(1-\e_i \e_j)^2e_i e_j^2=(1-2\e_i\e_j+\e_i^2\e_j^2)e_i e_j^2=e_i e_j^2.
    	\]
    \end{rem}

    Modulo $\theta$ we recover the extended Grassmann algebra. More precisely, the quotient $\G/\theta\G$ is the extended  Grassmann algebra $G^+$ over $C/2C$.

    The terminology attached to $\G$ is justified by the following theorem.

    \begin{thm} \label{thm:all_idents_of_G_are_consequences_of_grassmann}
        Let $\G$ be the generalized Grassmann algebra defined over $C$. Then $\T{\G}$ is generated as a T-ideal by the Grassmann
        identity, $[x,[y,z]]=0$.
    \end{thm}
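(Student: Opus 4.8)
The plan is to prove that $\T{\G}$ equals the T-ideal $\Gamma$ generated by $[x,[y,z]]$, by establishing the two inclusions separately. For $\Gamma\subseteq\T{\G}$ I must check that $[a,[b,c]]=0$ for all $a,b,c\in\G$; since $C[\e]$ is central in $\G$ and the Grassmann identity is multilinear, it suffices to take $a,b,c$ to be words in the generators $e_i$. Iterating \eqref{rem:sign_of_transposition} shows that for any two words $w,w'$ there is a central scalar $\mu_{w,w'}\in C[\e]$, a product of factors $1-\e_i\e_j$, with $w'w=\mu_{w,w'}\,ww'$, hence $[w',w]=(1-\mu_{w,w'})\,ww'$; since $ww'$ is again a single word, applying this twice gives $[a,[b,c]]=(1-\mu)(1-\nu)\,abc$ for suitable $\mu,\nu\in C[\e]$. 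The problem thus collapses to the purely commutative identity $(1-\mu)(1-\nu)=0$ in $C[\e]$. This rests on the fact --- already visible in the computation preceding \Dref{def:C_epsilon} --- that each factor $1-\e_i\e_j$ is a square root of $1$ in $C[\e]$ (because $\e_i^2\e_j^2=\theta^2\e_i\e_j=2\e_i\e_j$), so that $\mu$ and $\nu$ are involutions, together with the absorption rule $\e_i(1-\e_i\e_j)=\e_i(1-\theta\e_j)$, which lets one rewrite the extra $\e$'s carried by $1-\nu$ in terms of those already present in $1-\mu$ and cancel. (One confirms the base cases $[e_k,[e_i,e_j]]=0$ and $[e_k,[e_i,e_je_l]]=0$ by hand; a structural alternative is to note that $[x,[y,z]]$ holds in $G^+$, hence modulo $\theta$ in $\G/\theta\G=G^+_{C/2C}$, and that, $\G$ over $\Z$ being $\Z$-free, one may pass to $\G$ over $\Q$, where $2$ is a unit, the $\theta^{-1}\e_i$ become orthogonal idempotents, and $\G$ splits into mixed Grassmann--polynomial blocks on each of which $[x,[y,z]]$ holds.)

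For the reverse inclusion the plan is to show that the only element of $C\sg{X}/\Gamma$ that is an identity of $\G$ is $0$. Working only with $\Gamma$ --- hence over an arbitrary $C$ --- one runs the normal-form reduction for the variety defined by $[x,[y,z]]=0$: this identity makes every commutator central modulo $\Gamma$, so $yx=xy-[x,y]$ lets one sort variables, and $C\sg{X}/\Gamma$ is spanned over $C$ by the regular monomials $x_{i_1}\cdots x_{i_k}[x_{j_1},x_{j_2}]\cdots[x_{j_{2s-1}},x_{j_{2s}}]$ with the $i$-block weakly increasing and the commutator part in a fixed normal form; in multilinear degree $n$, where the $i$- and $j$-blocks partition $\{1,\dots,n\}$ and the $j$'s increase, there are exactly $\sum_{s}\binom{n}{2s}=2^{n-1}$ of them --- the combinatorial count underlying \Rref{rem:grassmann_identity}. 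Next one evaluates these in $\G$ via $x_i\mapsto e_i$ and reduces each image, using \eqref{rem:sign_of_transposition}, to a $C[\e]$-combination of reduced words. Regular monomials of different multidegree land in different homogeneous components of $\G$ (which is graded by degree in the $e_i$), so one may fix a multidegree; there the image of the regular monomial with commutator-indices $S$ has, as a leading term in the $C$-basis of $\G$ consisting of reduced words times monomials of $C[\e]$, a reduced word times $\e_S$, all remaining terms carrying $\e$-monomials of strictly larger support. Hence distinct regular monomials of a given multidegree have $C$-linearly independent images in $\G$; a $C$-combination of regular monomials that is an identity of $\G$ is therefore $0$, and since the regular monomials span $C\sg{X}/\Gamma$ while $\Gamma\subseteq\T{\G}$, we conclude $\Gamma=\T{\G}$ (so, in passing, the regular monomials are a $C$-basis of $C\sg{X}/\Gamma$).

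The step demanding real care is this evaluation: it needs $\G$ to be a free $C$-module with the expected basis --- equivalently, since $\G$ is free over $C[\e]$ on reduced words by the normal-ordering argument for \eqref{comm_rel_for_ext_grass_algebra}, that $C[\e]$ is free over $C$ on the monomials $\theta^{\delta}\e_S$ ($\delta\in\{0,1\}$) --- so that triangularity and $C$-linear independence have content over an arbitrary ring; and repeated variables must be handled carefully, so that the regular monomials really span $C\sg{X}/\Gamma$ and stay independent after substitution. The bookkeeping is exactly that of the classical Grassmann case, with the set $S$ of indices that have been permuted past one another now recorded by the $C[\e]$-monomial $\e_S$ in place of the sign $(-1)^{|S|}$ of the permutation --- and this $\e_S$ is precisely the generalized sign the paper formalizes next. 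By comparison the forward inclusion is routine once it is recast as the commutative identity $(1-\mu)(1-\nu)=0$ in $C[\e]$ or routed through $G$ and $G^+$.
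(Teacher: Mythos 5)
Your two-inclusion architecture, the normal form modulo the Grassmann identity, and the extraction of coefficients by evaluating in $\G$ all match the paper, but both halves are organized differently, and one step of your primary route for the forward inclusion has a genuine gap. The paper does \emph{not} reduce $[a,[b,c]]=0$ to a commutative identity in $C[\e]$: it proves the auxiliary identity $[e_i,e_j][u,e_k]+[e_j,e_k][e_i,u]=0$ (\Lref{lem:grass_ident_for_generators}, \Lref{lem:grass_consequ_for_one_word}) and then runs a double induction on word lengths (\Lref{lem:G_satisfies_grass}). Your reduction via \Pref{prop:exponent_property} to the single identity $(1-\exp(\e_b\e_c))(1-\exp(\e_a(\e_b+\e_c)))=0$ in $C[\e]$ is correct and attractive, but that identity, for $\e_a,\e_b,\e_c$ arbitrary sums of generators, is precisely where the paper's induction does its work; indeed the paper only ever obtains this commutative identity as a \emph{consequence} of the theorem (in the proof of \Lref{lem:free_str_satisfies_grassmann}). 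Your ``square root of $1$ plus absorption'' sketch checks it only for words of length at most two; the cancellation $\theta+\theta-\theta^3=0$ that closes the base case must be propagated through genuinely larger products, and you do not show how. The parenthetical alternative (pass to $\Q$ via $\Z$-freeness, where \Pref{prop:idempotents} splits $\G$ into supercommutative blocks) is a legitimate patch, but it imports the unproven --- though true and implicitly used elsewhere in the paper --- facts that $\G$ over $\Z$ is $\Z$-free and that $\G$ over $C$ is obtained from it by base change.

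For the reverse inclusion your route genuinely differs from, and in one respect improves on, \Lref{lem:all_idents_of_G_are_consequences_of_grassmann}. The paper first reduces to multihomogeneous identities by tensoring with a polynomial ring, then isolates coefficients one at a time by substituting $1$'s together with $e_1,e_2$ (then four generators, then six, and so on). You instead show all at once that the images of the regular monomials under $x_i\mapsto e_i$ are $C$-independent by a leading-term argument in the basis $\set{\e_T,\theta\e_T}$ of $C[\e]$: since in a multilinear regular monomial both blocks are sorted, every inversion created by reordering pairs an index of $S$ with one outside $S$, so the image is $\e_S$ plus terms $c\,\e_T$ or $c\,\theta\e_T$ with $T\supsetneq S$, and a minimal-support argument finishes. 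This is sound and yields the count of \Tref{thm:codim_of_G} as a byproduct. The trade-off is what each argument must know about the $C$-module structure of $\G$: yours needs the full basis of reduced words times $C[\e]$-monomials (a diamond-lemma verification you flag but do not supply), while the paper's substitutions need only that specific elements such as $\e_1\e_2e_1e_2$ are non-torsion --- a weaker fact, though the paper leaves it equally implicit. You should also note that your multilinear count does not by itself dispose of non-multilinear identities; the paper's multihomogeneous reduction (or your ``repeated variables'' caveat, carried out) is needed to conclude that $\T{\G}$ itself, not just its multilinear part, is generated by the Grassmann identity.
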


    We first show that $[x,[y,z]]=0$ holds in $\G$, and then that all other identities of $\G$ are consequences of it.

    \begin{lem} \label{lem:grass_ident_for_generators}
        Let $e_1,e_2,\dots\in\G$ be the generators as in \Dref{def:generalized_extended_grassmann}. Then,
        \begin{enumerate}
            \item $[e_i,[e_j,e_k]]=0$ for all $i$,$j$ and $k$.
            \item $[e_i,e_j][e_m,e_k]+[e_j,e_k][e_i,e_m]=0$ for all $i$,$j$,$k$ and $m$.
        \end{enumerate}
    \end{lem}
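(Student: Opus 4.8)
The plan is to derive both identities by brute computation from the defining relation \eqref{comm_rel_for_ext_grass_algebra}, or rather from its rewriting \eqref{rem:sign_of_transposition}, after isolating one elementary fact about the scalar ring. In $C[\e]$ one has $\e_a^2\e_b^2 = (\theta\e_a)(\theta\e_b) = \theta^2\e_a\e_b = 2\e_a\e_b$, so for any scalar $s$ divisible by $\e_a\e_b$ one gets $s(1-\e_a\e_b) = -s$. Combined with \eqref{rem:sign_of_transposition}, this says that whenever a word in the generators carries a scalar coefficient divisible by $\e_a\e_b$, an adjacent pair $e_a,e_b$ inside it may be transposed at the cost of a sign, exactly as in the classical Grassmann algebra; I will refer to this as \emph{$\e$-twisted anticommutativity}. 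It holds for all indices, including $a=b$ (where $\e_a\e_b$ reads $\e_a^2 = \theta\e_a$ and one also invokes $\theta\e_a e_a^2 = \e_a^2 e_a^2 = 0$ from \Dref{def:generalized_extended_grassmann}), so no separate treatment of coincident indices is needed.

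For part (1), using that the $\e$'s are central, the Leibniz rule for the commutator, and \eqref{comm_rel_for_ext_grass_algebra},
\[
[e_i,[e_j,e_k]] = \e_j\e_k[e_i,e_je_k] = \e_j\e_k\bigl([e_i,e_j]e_k + e_j[e_i,e_k]\bigr) = \e_i\e_j^2\e_k\,e_ie_je_k + \e_i\e_j\e_k^2\,e_je_ie_k .
\]
Replacing $\e_j^2 = \theta\e_j$ and $\e_k^2 = \theta\e_k$, the right-hand side collapses to $\theta\e_i\e_j\e_k(e_ie_j + e_je_i)e_k$; since the coefficient $\theta\e_i\e_j\e_k$ is divisible by $\e_i\e_j$, $\e$-twisted anticommutativity makes the symmetric combination $e_ie_j + e_je_i$ contribute zero, whence $[e_i,[e_j,e_k]] = 0$. (Note the Leibniz regrouping is what produces a coefficient containing all three epsilons; the naive expansion $\e_j\e_k(e_ie_je_k - e_je_ke_i)$ has coefficient $\e_j\e_k$ only, and is not directly amenable to the sign trick.)

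For part (2), expand each commutator by \eqref{comm_rel_for_ext_grass_algebra} and pull the central scalars to the front:
\[
[e_i,e_j][e_m,e_k] + [e_j,e_k][e_i,e_m] = \e_i\e_j\e_k\e_m\bigl(e_ie_je_me_k + e_je_ke_ie_m\bigr).
\]
Now reorder each monomial into a common word, say $e_ie_je_ke_m$, using \eqref{rem:sign_of_transposition}: the coefficient $\e_i\e_j\e_k\e_m$ is divisible by $\e_a\e_b$ for any two of the indices $i,j,k,m$, so $\e$-twisted anticommutativity applies at every step and stays available because each transposition only flips the overall sign. One transposition brings $e_ie_je_me_k$ to $-\,e_ie_je_ke_m$, two transpositions bring $e_je_ke_ie_m$ to $+\,e_ie_je_ke_m$, and the two contributions cancel; hence the left-hand side is $0$.

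None of this is delicate once the scalar identity $\e_a^2\e_b^2 = 2\e_a\e_b$ is in place, and in fact both computations mirror the classical Grassmann cancellations. The only step that requires attention — and the sole reason the argument is not literally the classical one — is bookkeeping: at each transposition one must check that the ambient scalar coefficient is divisible by the product $\e_a\e_b$ of the two epsilons being swapped, and verify (using the relations of $C[\e]$ together with $\theta\e_a e_a^2 = 0$) that this always holds, in particular when some of the indices coincide.
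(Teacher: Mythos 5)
Your proof is correct and follows essentially the same route as the paper: expand the commutators via the defining relation, pull the central scalars out front, and observe that the resulting scalar coefficient vanishes because $\e_a^2\e_b^2=\theta^2\e_a\e_b=2\e_a\e_b$ (the paper computes this as $\theta+\theta-\theta^3=0$ in part (1) and as $1+(1-\theta^2)^3=0$ in part (2), which is exactly your ``$\e$-twisted anticommutativity'' in scalar form). Your explicit treatment of coincident indices via $\theta\e_a e_a^2=0$ is a small extra care the paper leaves implicit, but the substance of the argument is identical.
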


    \begin{proof}
       We have:
        \begin{eqnarray*}
            {}[e_i,[e_j,e_k]] & =& [e_i,\e_j\e_k e_j e_k] \\
            & = & \e_j\e_k[e_i,e_j e_k] \\
            & = & \e_j\e_k([e_i,e_j]e_k+e_j[e_i,e_k])\\
            & = & \e_j\e_k(\e_i\e_j e_i e_j e_k+\e_i\e_k e_j e_i e_k)\\
            & = &(\e_i\e_j^2\e_k+\e_i\e_j\e_k^2-\e_i^2\e_j^2\e_k^2)e_i e_j e_k\\
            & = & (\theta+\theta-\theta^3)\e_i\e_j\e_k e_i e_j e_k\\
            & =& (2\theta-2\theta)\e_i\e_j\e_k e_i e_j e_k=0.
        \end{eqnarray*}
        Similarly,
        \begin{eqnarray*}
            {}[e_i,e_j][e_m,e_k] & + & [e_j,e_k][e_i,e_m]=\e_i\e_j\e_m\e_k(e_i e_j e_m e_k+e_j e_k e_i e_m)\\
            & = & \e_i\e_j\e_m\e_k(1+(1-\e_i\e_k)(1-\e_j\e_i)(1-\e_k\e_m))e_i e_j e_m e_k\\
            & = & \e_i\e_j\e_m\e_k(1+(1-\theta^2)(1-\theta^2)(1-\theta^2))e_i e_j e_m e_k\\
            & = & \e_i\e_j\e_m\e_k(1-1)e_i e_j e_m e_k=0.
        \end{eqnarray*}
    \end{proof}

    More generally:
    \begin{lem} \label{lem:grass_consequ_for_one_word}
        We have $[e_i,e_j][u,e_k]+[e_j,e_k][e_i,u]=0$ for every element $u \in \G$.
    \end{lem}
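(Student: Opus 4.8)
The plan is to bootstrap from \Lref{lem:grass_ident_for_generators}. The key observation is that part~(1) of that lemma, $[e_i,[e_j,e_k]]=0$, says exactly that every commutator $[e_a,e_b]$ of two generators commutes with every generator; since moreover $[e_a,e_b]=\e_a\e_b e_a e_b$ and the ring $C[\e]$ is central in $\G$ by construction, and the $e_i$ together with $C[\e]$ generate $\G$, each such $[e_a,e_b]$ is in fact a \emph{central} element of $\G$. This centrality is the engine behind the whole argument.

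Next I would reduce to the case in which $u$ is a monomial in the generators. Both sides of the claimed identity are $C[\e]$-linear in the slot occupied by $u$, because $[u,e_k]$ and $[e_i,u]$ are (the elements of $C[\e]$ being central), and $\G$ is spanned as a $C[\e]$-module by the words $e_{p_1}\cdots e_{p_s}$ with $s\ge 0$. Hence it suffices to verify the identity for $u=e_{p_1}\cdots e_{p_s}$, and when $s=0$ (that is, $u\in C[\e]$) both commutators vanish, so we may assume $s\ge 1$.

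For such a monomial $u$, the Leibniz rule for the commutator gives
\[
  [u,e_k]=\sum_{l=1}^{s}e_{p_1}\cdots e_{p_{l-1}}[e_{p_l},e_k]e_{p_{l+1}}\cdots e_{p_s},
  \qquad
  [e_i,u]=\sum_{l=1}^{s}e_{p_1}\cdots e_{p_{l-1}}[e_i,e_{p_l}]e_{p_{l+1}}\cdots e_{p_s}.
\]
Since each $[e_{p_l},e_k]$ and $[e_i,e_{p_l}]$ is central, I can move it to the front of its summand, so that, writing $u_{\hat l}=e_{p_1}\cdots e_{p_{l-1}}e_{p_{l+1}}\cdots e_{p_s}$ for the word $u$ with its $l$-th letter deleted, one gets $[u,e_k]=\sum_l[e_{p_l},e_k]\,u_{\hat l}$ and $[e_i,u]=\sum_l[e_i,e_{p_l}]\,u_{\hat l}$ with the \emph{same} factors $u_{\hat l}$. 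Substituting and collecting,
\[
  [e_i,e_j][u,e_k]+[e_j,e_k][e_i,u]
  =\sum_{l=1}^{s}\bigl([e_i,e_j][e_{p_l},e_k]+[e_j,e_k][e_i,e_{p_l}]\bigr)u_{\hat l},
\]
and every summand vanishes by part~(2) of \Lref{lem:grass_ident_for_generators} applied with $m=p_l$. This yields the lemma.

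I do not expect a genuine obstacle here; the one point to be careful about is the reduction to monomials, where one must note that the identity is linear over the \emph{central} ring $C[\e]$ in the variable $u$ (so that sliding the scalars $\e_a$ in and out is harmless), and that the ``pull the commutator to the front'' step is legitimate precisely because of the centrality extracted from part~(1). Everything else is the ordinary Leibniz expansion.
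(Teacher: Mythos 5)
Your proof is correct and follows essentially the same route as the paper: reduce to monomials by $C[\e]$-linearity, expand $[u,e_k]$ and $[e_i,u]$ by the Leibniz rule, use the centrality of generator-commutators (extracted from part~(1) of \Lref{lem:grass_ident_for_generators}) to pair up the terms, and kill each summand with part~(2). The only cosmetic difference is that you pull the central commutators to the front of each summand, whereas the paper leaves them in place inside the word.
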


    \begin{proof}
        It suffices to check the claim for monomials. Let $u=e_{\ell_1}\cdots e_{\ell_n}$. Then, we have:
        \begin{multline*}
            [e_i,e_j][u,e_k]+[e_j,e_k][e_i,u]=\\
            \displaystyle\sum_m e_{\ell_1}\cdots e_{\ell_{m-1}}([e_i,e_j][e_{\ell_m},e_k]+[e_j,e_k][e_i,e_{\ell_m}])e_{\ell_{m+1}}\cdots e_{\ell_{n}}=0,
        \end{multline*}
     by \Lref{lem:grass_ident_for_generators}.
    \end{proof}

    \begin{lem} \label{lem:G_satisfies_grass}
        We have that $\G$ satisfies the Grassmann identity.
    \end{lem}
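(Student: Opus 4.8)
The plan is to verify the Grassmann identity $[x,[y,z]]=0$ in $\G$ by repeated use of the Leibniz rule $[ab,c]=a[b,c]+[a,c]b$, pushing everything down to the elementary commutators $[e_i,e_j]$, which are controlled by \Lref{lem:grass_ident_for_generators}.

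Since $[x,[y,z]]$ is multilinear and $C[\e]=C[\theta,\e_1,\dots]$ is central in $\G$, it suffices to check the identity when $x,y,z$ are monomials in the generators $e_i$. Applying the Leibniz rule to the outer bracket and inducting on the length of $x$, the problem reduces to showing $[e_i,[v,w]]=0$ for a single generator $e_i$ and arbitrary monomials $v,w$ in the generators.

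The second ingredient is that every elementary commutator $[e_a,e_b]$ is already central in $\G$. Indeed, by \eqref{rem:sign_of_transposition}, commuting a generator $e_k$ past $[e_a,e_b]=\e_a\e_b\,e_ae_b$ multiplies it by the scalar $\e_a\e_b(1-\e_k\e_a)(1-\e_k\e_b)\in C[\e]$, and expanding this out while using $\e_i^2=\theta\e_i$ and $\theta^2=2$ shows it equals $\e_a\e_b$; since $C[\e]$ is central, $[e_a,e_b]$ therefore commutes with all of $\G$. Now expand $[v,w]$ by the Leibniz rule in both arguments, pulling each (central) elementary commutator to the front: for $v=e_{l_1}\cdots e_{l_p}$ and $w=e_{r_1}\cdots e_{r_q}$ this gives $[v,w]=\sum_{a,b}[e_{l_a},e_{r_b}]\,v_{\widehat a}w_{\widehat b}$, where $v_{\widehat a}$ and $w_{\widehat b}$ denote $v$ and $w$ with the indicated factor deleted. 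Then $[e_i,[v,w]]=\sum_{a,b}\big([e_i,[e_{l_a},e_{r_b}]]\,v_{\widehat a}w_{\widehat b}+[e_{l_a},e_{r_b}]\,[e_i,v_{\widehat a}w_{\widehat b}]\big)$; the first summand vanishes by \Lref{lem:grass_ident_for_generators}(1), and expanding $[e_i,v_{\widehat a}w_{\widehat b}]$ by Leibniz and again pulling central commutators to the front turns the second summand into a sum of terms of the form $[e_{l_a},e_{r_b}]\,[e_i,e_{l_{a'}}]\,(\text{monomial})$ and $[e_{l_a},e_{r_b}]\,[e_i,e_{r_{b'}}]\,(\text{monomial})$.

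These remaining terms cancel in pairs obtained by interchanging the two positions against which $e_i$ was differentiated: each pair has the form $\big([e_{l_a},e_{r_b}][e_i,e_{l_{a'}}]+[e_{l_{a'}},e_{r_b}][e_i,e_{l_a}]\big)(\text{monomial})$ (and symmetrically for the $r$-positions), which vanishes by \Lref{lem:grass_ident_for_generators}(2) after using centrality to place the two commutators side by side. Hence $[e_i,[v,w]]=0$, completing the proof. The main obstacle is exactly this last bookkeeping: one must check that the two rounds of Leibniz expansion produce precisely the terms described and that they really do pair up as claimed, keeping track of signs and of which factors of $v$ and $w$ each commutator came from. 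It is here that the centrality of $[e_a,e_b]$ — or, as an alternative, \Lref{lem:grass_consequ_for_one_word}, which packages the analogous cancellation with a general element sitting in the middle slot — is what makes \Lref{lem:grass_ident_for_generators}(2), stated only for two adjacent elementary commutators, applicable in the present situation.
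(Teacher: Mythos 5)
There is a genuine gap in the cancellation step, and it is precisely at the point you flag as ``bookkeeping.'' When you fully expand $[v,w]$ and then $[e_i,-]$ by the Leibniz rule and pull the central elementary commutators to the front, the residual monomial attached to the term indexed by $(a,b,a')$ is \emph{not} $v_{\widehat a\widehat{a'}}w_{\widehat b}$ but an interleaved word in which $w_{\widehat b}$ sits at position $a$ of $v$; for the partner term $(a',b,a)$ it sits at position $a'$. These two monomials differ by commuting the non-central word $w_{\widehat b}$ past the letters of $v$ between positions $a$ and $a'$, which by \eqref{rem:sign_of_transposition} (or \Pref{prop:exponent_property}) costs a central factor $\exp(\e_{w_{\widehat b}}\e_u)\neq 1$. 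Centrality of the commutators lets you move the \emph{commutators} around freely, but it does not let you reorder the leftover generators for free. Concretely, take $v=e_1e_2e_3$, $w=e_4e_5$ and outer generator $e_6$: the pair you would match is $[e_1,e_4][e_6,e_3]\,e_5e_2$ against $[e_3,e_4][e_6,e_1]\,e_2e_5$, and since $[e_1,e_4][e_6,e_3]=-[e_3,e_4][e_6,e_1]$ by \Lref{lem:grass_ident_for_generators}(2), their sum is $[e_3,e_4][e_6,e_1](e_2e_5-e_5e_2)=[e_3,e_4][e_6,e_1][e_2,e_5]$, a product of three commutators in six distinct generators, which is nonzero in $\G$. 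So the pairwise cancellation fails; the total sum does vanish, but only after a further round of cancellation among these triple-commutator residues, which your argument does not supply. (The same issue already infects the displayed formula $[v,w]=\sum_{a,b}[e_{l_a},e_{r_b}]v_{\widehat a}w_{\widehat b}$: the individual terms are wrong by such factors, even though the total happens to agree.)

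The paper's proof is structured exactly so as to avoid ever commuting two non-central monomials past each other. It peels off a single generator at a time ($w_1=e_ju$, then $w_2=e_kv$) and runs a double induction whose hypothesis is that all shorter commutators are already central; every rearrangement performed moves a monomial past an element known to be central, so no $\exp(\e\e)$ factors ever appear, and the base case is exactly \Lref{lem:grass_consequ_for_one_word}. To repair your argument you would either have to reorganize it along those lines, or track all the $\exp$ factors explicitly and prove a separate cancellation lemma for the higher-order residues; as written, the proof does not go through.
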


    \begin{proof}
        We wish to show that all commutators are central. Thus, it suffices to show that they commute with the $e_i$-s.
        So, we must show that $[e_i,[w_1,w_2]]=0$ where $w_1$ and $w_2$ are some words in the generators. If the lengths
        of both $w_1$ and $w_2$ are $1$, then we are done by the previous lemma. Otherwise, assume without loss of generality
        that $w_1=e_j u$, and assume via induction that we already have: $[e_i,[x,y]]=0$ for all $i$ and for all words $x$, $y$
        such that $x$ is not longer than $u$, and $y$ is not longer than $w_2$. Then
        \begin{eqnarray*}
            {}[e_i,[e_j u,w_2]] & = & [e_i,e_j[u,w_2]]+[e_i,[e_j,w_2]u] \\
            & = &[e_i,e_j][u,w_2]+e_j[e_i,[u,w_2]]+[e_i,[e_j,w_2]]u+[e_j,w_2][e_i,u]\\
            & = & [e_i,e_j][u,w_2]+[e_j,w_2][e_i,u].
        \end{eqnarray*}

        We need to prove that this is zero. We will do so by induction. If $w_2=e_k v$, and if we assume that the expression is zero for all
        shorter words, then
        \begin{gather*}
            [e_i,e_j][u,w_2]+[e_j,w_2][e_i,u]=[e_i,e_j][u,e_k v]+[e_j,e_k v][e_i,u]=\\
            [e_i,e_j][u,e_k]v+[e_i,e_j]e_k[u,v]+e_k[e_j,v][e_i,u]+[e_j,e_k]v[e_i,u]=\\
            e_k([e_i,e_j][u,v]+[e_j,v][e_i,u])+v([e_i,e_j][u,e_k]+[e_j,e_k][e_i,u])
        \end{gather*}
        since $e_k$, $v$ commute with the commutators (by the outer induction hypothesis). We are thus left with proving that
        $[e_i,e_j][u,e_k]+[e_j,e_k][e_i,u]=0$, which also serves as the basis of the (inner) induction. But this is exactly
        what we have already proven in \Lref{lem:grass_consequ_for_one_word}.
    \end{proof}

    We are now left with proving the other direction of \Tref{thm:all_idents_of_G_are_consequences_of_grassmann}.

    \begin{rem}[{\cite[Lemmas~3.43 and 3.44]{comp_aspects}}] \label{lem:G_satisfies_grass_consequ}
        The identities
        \begin{eqnarray}
            {}[x,u][v,z]+[x,v][u,z] & = & 0, \label{eq:Id1}\\
            {}[x,y][y,z] & =& 0 \label{Id2} \nonumber
        \end{eqnarray}
        are consequences of the Grassmann identity.
    \end{rem}

    \begin{lem} \label{lem:all_idents_of_G_are_consequences_of_grassmann}
        All identities of $\G$ are consequences of the Grassmann identity.
    \end{lem}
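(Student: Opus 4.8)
We need to show that any identity of $\G$ is a consequence of the Grassmann identity $[x,[y,z]]=0$. By \Lref{lem:G_satisfies_grass} we already know $[x,[y,z]]=0\in\T{\G}$, so the content is the reverse inclusion. Let $\Gamma$ be the T-ideal generated by the Grassmann identity; we must show $\T{\G}\subseteq\Gamma$. Since $C$ need not be a field, I cannot immediately reduce to the multilinear part, so I would work directly in $C\sg{X}/\Gamma$, the relatively free algebra of the Grassmann identity. The strategy is to exhibit a spanning set for $C\sg{X}/\Gamma$ that is small enough that the natural surjection $C\sg{X}/\Gamma\twoheadrightarrow C\sg{X}/\T{\G}$ (the relatively free algebra of $\G$, which sits inside a product of copies of $\G$) must be injective — equivalently, to show that the proposed spanning elements remain $C$-independent after evaluation in $\G$.

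**Key steps.** First, using the consequences \Eqref{eq:Id1} and $[x,y][y,z]=0$ recorded in \Rref{lem:G_satisfies_grass_consequ}, together with centrality of all commutators modulo $\Gamma$, I would put an arbitrary element of $C\sg{X}/\Gamma$ into a normal form: a $C$-linear combination of terms of the shape
\[
    x_{i_1}\cdots x_{i_r}\cdot [x_{j_1},x_{k_1}]\cdots[x_{j_s},x_{k_s}],
\]
where the commutator ``tail'' is a product of central commutators, and (using $[x,u][v,z]=-[x,v][u,z]$ and $[x,y][y,z]=0$) the indices appearing in the commutator part can be normalized — each letter used in a commutator is used exactly once there, the commutators can be sorted, and within the ``linear head'' the variables appear in a fixed order. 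This is the standard combinatorial reduction that bounds the Grassmann co-dimension by $2^{n-1}$ (see \Rref{rem:grassmann_identity}); I would cite the computation from \cite{comp_aspects} rather than redo it. Second, I would map such a normal-form monomial into $\G$ by substituting the generators $e_\ell$ (or products thereof, with the $\e$-scalars) for the variables, and compute: a product of commutators $[e_{j_1},e_{k_1}]\cdots[e_{j_s},e_{k_s}]$ equals $\e_{j_1}\e_{k_1}\cdots\e_{j_s}\e_{k_s}\,e_{j_1}e_{k_1}\cdots e_{j_s}e_{k_s}$ by \Eqref{comm_rel_for_ext_grass_algebra}, so distinct normalized monomials produce distinct basis monomials of $\G$ (as a free module over $C[\e]$, whose relevant monomials in the $e_\ell$ and $\e_\ell$ are $C$-independent). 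Hence the normal-form monomials are $C$-independent in the image, forcing $C\sg{X}/\Gamma\xrightarrow{\ \sim\ } C\sg{X}/\T{\G}$ and therefore $\T{\G}=\Gamma$.

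**The main obstacle.** The delicate point is that over a general ring $C$ — in particular when $2$ is not invertible — one cannot linearize, so the ``normal form'' argument must be carried out for arbitrary (not merely multilinear) polynomials, and one must be careful that the reduction to normal form uses only consequences of the Grassmann identity and no division. Concretely, I expect the hard part to be controlling repeated variables and the interaction of the ``head'' powers $x_i^{m}$ with the commutator tail, and verifying that after substitution into $\G$ the monomials $\prod\e_{\bullet}\cdot\prod e_{\bullet}$ are genuinely $C$-linearly independent — this rests on knowing a $C$-module basis for $\G$, i.e. that $\G$ is free over $C$ on the expected monomials, which is where the analysis of $C[\e]$ from \Dref{def:C_epsilon} (the relations $\e_i^2=\theta\e_i$, $\theta^2=2$) does the work. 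Once that freeness is in hand, the independence — and hence the theorem — follows.
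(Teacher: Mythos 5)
Your normal form is the right one, and your overall shape --- reduce modulo consequences of the Grassmann identity to a spanning set, then show that the spanning set evaluates independently in $\G$ --- is the same as the paper's. For multilinear polynomials your argument is essentially complete. But the independence step for polynomials with repeated variables has a genuine gap, and it is not the kind that a structure theorem for $\G$ can close: the freeness you defer to (``once that freeness is in hand'') is false. Since $[e_i,e_i]=0$ forces $\theta\e_ie_i^2=0$, words with repeated letters are \emph{not} free over $C[\e]$, and the single substitution $x_i\mapsto e_i$ does not separate your normal-form monomials. Concretely, over $C=\Z$ the normal-form element $2x_1[x_1,x_2]$ evaluates at $x_1\mapsto e_1$, $x_2\mapsto e_2$ to $2\e_1\e_2e_1^2e_2=(\theta\e_1e_1^2)(\theta\e_2 e_2)=0$, yet it is not an identity of $\G$: substituting $x_1\mapsto e_1+e_3$, $x_2\mapsto e_2$ leaves the component $2(\e_1\e_2+\e_2\e_3-\theta\e_1\e_2\e_3)e_1e_3e_2\neq 0$ on the square-free word $e_1e_3e_2$. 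So distinct normal-form monomials do not remain $C$-independent under your evaluation map, and the surjection $C\sg{X}/\Gamma\to C\sg{X}/\T{\G}$ cannot be proved injective this way.

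The paper closes exactly this gap with two devices absent from your plan. First, although multilinearization is unavailable, reduction to \emph{multi-homogeneous} components is available over any ring: base-change to a polynomial ring $C[\lambda_1,\dots,\lambda_n]$, substitute $\lambda_i\otimes x_i$, and observe that the components $\lambda_1^{d_1}\cdots\lambda_n^{d_n}\otimes f_{d_1,\dots,d_n}(x_1,\dots,x_n)$ must vanish separately --- no Vandermonde argument and no division is needed. Second, within a fixed multidegree the coefficients are isolated one at a time by a \emph{family} of substitutions rather than a single one: substitute $1$ for every variable (killing all commutators and isolating the commutator-free term), then $e_1,e_2$ into one pair of variables and $1$ elsewhere, then four generators, and so on, inducting on the number of variables occurring inside commutators. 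It is the substitution of $1$ into the remaining variables, together with the prior elimination of lower terms, that sidesteps the torsion created by repeated generators. You would need to adopt this scheme, or replace your single evaluation by substitutions of sums of distinct generators generic enough to detect coefficients on square-free words; as written, the key step of your proposal fails.
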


    \begin{proof}
        We would first like to reduce to the multi-homogenous case. So, note that $G/\ideal{\e_i\suchthat i\in X}$, for all finite
        $X\subseteq\N$, is isomorphic to $C[\lambda_i\suchthat i\in X]\otimes_C\G$, where $C[\lambda_i\suchthat i\in X]$ is a commutative
        polynomial algebra in $|X|$ variables. Thus, if $f(x_1,\dots,x_n)$ is an identity, then $f(\lambda_1\otimes x_1,\lambda_2\otimes x_2,\dots,\lambda_n\otimes x_n)$ is
        also an idnetity. If we let $f_{d_1,\dots,d_n}(\lambda_1\otimes x_1,\dots,\lambda_n\otimes x_n)$ be the component of $f(\lambda_1\otimes x_1,\dots,
        \lambda_n\otimes x_n)$ of degree $d_i$ in $\lambda_i$, we see that $f_{d_1,\dots,d_n}(\lambda_1\otimes x_1,\dots,\lambda_n\otimes x_n)=
        \lambda_1^{d_1}\cdots\lambda_n^{d_n}\otimes f_{d_1,\dots,d_n}(x_1,\dots,x_n)$ are the multihomogenous components of $f$, and must be
        equal to zero separately. Thus, we can assume that $f$ is multi-homogenous.

        So, let $f$ be a multi-homogenous identity of $\G$. We need to prove that it is a consequence of the Grassmann identity. Since
        commutators are central, $f$ can be rewritten as a sum of terms of the form
        \[
            a x_{k_1}\cdots x_{k_m}[x_{k_{m+1}},x_{k_{m+2}}][x_{k_{m+3}},x_{k_{m+4}}]\cdots[x_{k_{n-1}},x_{k_n}],
        \]
        where $k_1\leq\dots\leq k_m$. Using \Eqref{eq:Id1}, we may assume that $k_{m+1} < \dots <x_{k_n}$.

        Substitution of $1$ for all of $x_1,\dots,x_n$ sends $f$ to  the
        coefficient of the term $x_1\cdots x_n$, and since $f$ is an
        identity, this coefficient is zero. For every pair of variables
        $x_i,x_j$, substitute $1$ for the other variables and $e_1,e_2$
        for $x_i,x_j$; the only nonzero term is the one in which exactly
        these two variables are in the commutator, which again proves that
        the coefficient of this term is zero. Repeating this argument for
        all subsets of four variables, then six, and so on, we see that
        $f$ is zero modulo the Grassmann identity.
    \end{proof}

    \ifx\separatepaper\undefined
        \begin{rem}\label{rem:mode2}
            The same proof works for $\G/\ideal{e_i^2}$, so in fact we have $\T{\G}=\T{\G/\ideal{e_i^2}}$.
        \end{rem}
    \fi

    \subsection{The Ring \texorpdfstring{$C[\e]$}{C[e]} and the Connection to the Grassmann Algebra} \label{subsec:idemps}

    Our next goal is to show that when $2$ is invertible, $C[\e]$ has enough idempotents to break $\G$ into a sum of supercommutative
    pieces. The basic observation is that the expressions $\frac{1}{2}\theta\e_i$ (if defined) are idempotents.

    \begin{dfn} \label{def:restricted_G_+}
    For any subset $X\subseteq\N$, let
    $\G_X=C\sg{e_j,\e_j,\theta\suchthat j\in X}\subset\G$ be the
    subalgebra generated by all generators $\e_j$ and $e_j$ whose
    indices are in $X$.
    \end{dfn}

    \begin{dfn} \label{def:idemps}
    	Assume that $2$ is invertible in $C$, and let $X\subseteq\N$ be a
    	finite subset. For any association $s \co X\rightarrow\set{\pm 1}$
    	of signs to the indices in $X$, define
    	\begin{equation*}
    		\Lambda_s=\displaystyle\prod_{s(a) = -1}\frac{1}{2}\theta\e_a\prod_{s(b) = +1}(1-\frac{1}{2}\theta\e_b).
    	\end{equation*}
    \end{dfn}

    \begin{prop} \label{prop:idempotents}
        Assume that $2$ is invertible in $C$.
        Let $X\subseteq\N$ be a finite subset.
        \begin{enumerate}
            \item \label{item_idemp_complete} The elements $\Lambda_s\in C[\e]$, for $s \co X \ra \set{\pm 1}$, form a complete system of
                idempotents of $C[\e]$.
            \item \label{item_idemp_behavior} For every $s \co X \ra \set{\pm 1}$, the algebra $\Lambda_s\G_X$ is a free supercommutative
                algebra, with even generators $\theta$ and $\Lambda_s e_b$ for $s(b)=+1$, and odd generators $\Lambda_s e_a$ for $s(a)=-1$.
        \end{enumerate}
    \end{prop}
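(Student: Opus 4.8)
The plan is to verify both parts by direct computation in the commutative ring $C[\e]$ and in the algebra $\G_X$, using the defining relations $\e_i^2 = \theta\e_i$ and $\theta^2 = 2$. First I would record the basic facts about the elements $u_a = \frac12\theta\e_a$: each $u_a$ is idempotent, since $u_a^2 = \frac14\theta^2\e_a^2 = \frac14\cdot 2\cdot\theta\e_a = \frac12\theta\e_a = u_a$, using that $2$ is invertible. Moreover the $u_a$ commute with each other (everything is in the commutative ring $C[\e]$), so the collection $\{u_a, 1-u_a : a \in X\}$ generates a finite Boolean algebra of idempotents, and the $\Lambda_s$ are exactly its atoms: $\Lambda_s = \prod_{s(a)=-1}u_a\prod_{s(b)=+1}(1-u_b)$. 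The orthogonality $\Lambda_s\Lambda_{s'} = 0$ for $s \ne s'$ and completeness $\sum_s \Lambda_s = \prod_{a\in X}(u_a + (1-u_a)) = 1$ are then the standard identities for a complete system of orthogonal idempotents built from commuting idempotents; this proves part~\eqref{item_idemp_complete} once I also note each $\Lambda_s$ is itself idempotent.

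For part~\eqref{item_idemp_behavior} I would work inside the corner ring $\Lambda_s\G_X$, which has unit $\Lambda_s$. The key local computations are: if $s(a) = -1$ then $\Lambda_s\e_a = \Lambda_s u_a\cdot\frac{2}{\theta}$—more simply, $\Lambda_s\theta\e_a = \Lambda_s\cdot 2u_a = 2\Lambda_s$, so $\Lambda_s\e_b^2 = \Lambda_s\theta\e_b$ reduces to a scalar multiple of $\Lambda_s$, and in particular $\Lambda_s\e_a\e_b = 0$ whenever $s(a) = -1$ (since $\Lambda_s u_a = \Lambda_s$ forces $\Lambda_s\e_a = \Lambda_s\e_a u_b^{?}$...). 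The cleaner route: observe $\Lambda_s(1-\frac12\theta\e_b) = \Lambda_s$ when $s(b) = +1$, hence $\Lambda_s\theta\e_b = 0$ and therefore $\Lambda_s\e_b^2 = \Lambda_s\theta\e_b = 0$; while $\Lambda_s\theta\e_a = 2\Lambda_s$ when $s(a) = -1$, hence $\Lambda_s\e_a^2 = \Lambda_s\theta\e_a = 2\Lambda_s$, and since $2$ is invertible $\Lambda_s\e_a$ is invertible in the corner. Plugging these into the relation \eqref{rem:sign_of_transposition}, $e_je_i = (1-\e_i\e_j)e_ie_j$, and into $e_i^2$-relation from \Dref{def:generalized_extended_grassmann}: after multiplying by $\Lambda_s$, the terms $\Lambda_s\e_i\e_j$ vanish unless both $s(i) = s(j) = -1$ (because $\Lambda_s\e_b = 0$ for $s(b)=+1$ — this follows from $\Lambda_s\theta\e_b = 0$ and invertibility of $\theta$ on... no, $\theta$ is not invertible; instead use $\Lambda_s\e_b = \Lambda_s\e_b(1-u_b)\cdot(\text{unit})$; actually $\Lambda_s\e_b = \Lambda_s(1-u_b)\e_b$ and $(1-u_b)\e_b = \e_b - \frac12\theta\e_b^2 = \e_b - \frac12\theta^2\e_b = \e_b - \e_b = 0$, so indeed $\Lambda_s\e_b = 0$ when $s(b)=+1$). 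So in $\Lambda_s\G_X$: generators $\Lambda_s e_b$ with $s(b)=+1$ are central and have $(\Lambda_s e_b)^2$ unconstrained (lives over the polynomial part), while $\Lambda_s e_a$ with $s(a)=-1$ satisfy $(\Lambda_s e_a)^2 = \frac12\Lambda_s\e_a^2 e_a^2 \cdot$... wait $e_a^2$ is central and $\Lambda_s e_a^2$: from $\theta\e_a e_a^2 = 0$ and $\Lambda_s\theta\e_a = 2\Lambda_s$ we get $2\Lambda_s e_a^2 = 0$, so $\Lambda_s e_a^2 = 0$; and the commutation relation becomes $\Lambda_s e_je_i = (1 - \Lambda_s\e_i\e_j)\Lambda_s e_ie_j = -\Lambda_s e_ie_j$ when $s(i)=s(j)=-1$ (using $\Lambda_s\e_i\e_j = 2\Lambda_s$ there, since $\Lambda_s\e_i\e_j = \Lambda_s u_iu_j\cdot\frac{4}{\theta^2} = \Lambda_s\cdot 2$), and $\Lambda_s e_je_i = \Lambda_s e_ie_j$ when at least one index is a $+$. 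These are precisely the relations of the free supercommutative algebra with the stated even/odd generators (plus the central even generator $\theta$), which identifies $\Lambda_s\G_X$ as claimed — one checks there is a surjection from the free supercommutative algebra onto $\Lambda_s\G_X$ and that it is injective by comparing the natural spanning monomials on both sides.

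The main obstacle I anticipate is the bookkeeping in part~\eqref{item_idemp_behavior}: one must carefully track which products $\Lambda_s\e_i\e_j\cdots$ survive and to which scalar they collapse, and then confirm that the resulting presentation of $\Lambda_s\G_X$ has exactly the free supercommutative relations with no extra relations — i.e., that the obvious surjection from the free supercommutative algebra is also injective. For the injectivity one would exhibit an explicit $C$-basis of $\G_X$ (ordered monomials $\theta^k\e^{\alpha}e_{j_1}\cdots e_{j_r}$ with $j_1 < \cdots < j_r$, $\alpha$ bounded appropriately by the relations $\e_i^2 = \theta\e_i$), show $\Lambda_s\G_X$ is spanned by the images of those monomials whose $e$-part avoids repeated indices in $s^{-1}(-1)$, and match this against the basis of the free supercommutative algebra on the named generators over $C[\theta]/(\theta^2-2)$. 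Everything else is routine idempotent algebra and substitution into \eqref{comm_rel_for_ext_grass_algebra} and \eqref{rem:sign_of_transposition}.
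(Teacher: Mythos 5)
Your proof is correct and follows essentially the same route as the paper: verify that the $\frac12\theta\e_i$ are commuting idempotents to get the complete orthogonal system $\set{\Lambda_s}$, then compute $\Lambda_s\e_i\e_j$ (equal to $2\Lambda_s$ when $s(i)=s(j)=-1$ and to $0$ otherwise) and substitute into $e_je_i=(1-\e_i\e_j)e_ie_j$ to obtain the supercommutation relations in the corner $\Lambda_s\G_X$. If anything, you are more explicit than the paper on the points it compresses into ``freeness then easily follows,'' namely $\Lambda_s\e_b=0$ for $s(b)=+1$, $\Lambda_s e_a^2=0$ for $s(a)=-1$, and the basis comparison establishing injectivity of the surjection from the free supercommutative algebra.
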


    \begin{proof}
        The defining relations imply that the elements
        $\frac{1}{2}\theta\e_i$ are idempotents, from which it follows
        that every $\Lambda_s$ is an idempotent. Furthermore
        \[
                \displaystyle\sum_{s \co X \ra \set{\pm 1}}\Lambda_s=\prod_{i \in X}\((\frac{1}{2}\e_i\theta)+(1-\frac{1}{2}\e_i\theta)
                    \)= 1.
        \]

      For \ref{item_idemp_behavior}, let $a,a',b,b' \in X$ be such that $s(a)=s(a')=-1$, $s(b)=s(b')=+1$. We have:
      \begin{eqnarray*}
          [e_{a}\Lambda_s,e_{a'}\Lambda_s] & = & \e_{a}\e_{a'} e_{a} e_{a'}\Lambda_s\\
          & = & \e_{a}\e_{a'} e_{a} e_{a'}\frac{1}{2}\theta\e_{a}\frac{1}{2}\theta\e_{a'}\Lambda_s\\
          & = & \frac{1}{4}\theta^2\e_{a}^2\e_{a'}^2 e_{a} e_{a'}\Lambda_s\\
          & = & 2e_{a} e_{a'}\Lambda_s,
      \end{eqnarray*}
        So $\Lambda_se_{a}$ and $e_{a}\Lambda_s$ anticommute. The proof
        that $\Lambda_s e_b$ are central is analogous. Freeness then
        easily follows.
    \end{proof}

    Multiplying by a suitable idempotent, we may thus declare finitely
    many of the $e_1,e_2,\dots$ even, and finitely many others, odd.
    With this new understanding, we can now prove a much stronger
    correspondence between $\G$ and $\freeG$:

    \begin{thm} \label{thm:G_and_extended_G_equivalence}
        Suppose that $2$ is invertible in $C$. Let $A$ be some $C$-algebra. Then
        $\T{A\otimes_C \freeG}=\T{A\otimes_C\G}$. In particular, $\T{\M[n](\freeG)}=\T{\M[n](\G)}$.
    \end{thm}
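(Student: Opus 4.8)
The plan is to use the idempotent decomposition from \Pref{prop:idempotents} to reduce both $\T{A\otimes_C \G}$ and $\T{A\otimes_C \freeG}$ to the same thing. First I would note that since identities are preserved under passing to quotients and to subalgebras, and since an identity of $A\otimes_C\G$ is detected on finitely many variables at a time, it suffices to work with the subalgebras $\G_X$ for $X\subseteq\N$ finite. By \Pref{prop:idempotents}\ref{item_idemp_complete}, for such a finite $X$ of size $n$ the ring $C[\e]$ (restricted to indices in $X$) decomposes as a finite direct product $\prod_{s\co X\to\set{\pm1}}\Lambda_s C[\e]$, and correspondingly $\G_X = \bigoplus_s \Lambda_s\G_X$ as a direct sum of ideals. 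Tensoring with $A$ over $C$ (which is flat here since we just need the decomposition to persist, and direct sums commute with tensor) gives $A\otimes_C\G_X = \bigoplus_s A\otimes_C(\Lambda_s\G_X)$, so an element $f\in C\sg{X}$ is an identity of $A\otimes_C\G_X$ if and only if it is an identity of $A\otimes_C(\Lambda_s\G_X)$ for every sign assignment $s$.

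Next I would invoke \Pref{prop:idempotents}\ref{item_idemp_behavior}: each $\Lambda_s\G_X$ is a free supercommutative algebra (on finitely many even and odd generators, plus the central even element $\theta$ which satisfies $\theta^2=2$ and is invertible since $2$ is). I would want to identify $\Lambda_s\G_X$, or at least its relevant identities, with those of a suitable localization or quotient of $\freeG$. Concretely, $\Lambda_s\G_X$ is (a finitely generated piece of) $\freeG$ with the scalar $\theta$ adjoined; since $A\otimes_C\freeG \cong A\otimes_C C[y_1,\dots]\otimes_C G$ and the relevant structure is graded-commutative on the generators, the ungraded polynomial identities of $A\otimes_C(\Lambda_s\G_X)$ coincide with those of $A\otimes_C\freeG$. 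The key point is that a multilinear (or multihomogeneous, via the reduction already used in \Lref{lem:all_idents_of_G_are_consequences_of_grassmann}) polynomial $f$ is an identity of $A\otimes_C\freeG$ iff it vanishes on all substitutions by homogeneous elements $a\otimes w$, and by choosing $X$ large enough and $s$ appropriately one realizes any finite pattern of even/odd homogeneous substitutions inside some $\Lambda_s\G_X$; conversely every substitution into $\Lambda_s\G_X$ arises from $\freeG$. Running this over all $s$ and all finite $X$ yields $\T{A\otimes_C\G}\supseteq\T{A\otimes_C\freeG}$, and the reverse inclusion follows because $\freeG$ embeds (after the idempotent cut and after inverting $\theta$, harmless since $2$ is invertible) into a corner of $\G$, so every identity of $A\otimes_C\G$ specializes to one of $A\otimes_C\freeG$.

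The main obstacle I anticipate is bookkeeping the role of $\theta$: in $\freeG$ there is no $\theta$, whereas in $\Lambda_s\G_X$ the generators $e_a$ square to zero only up to the factor $\theta$ (indeed $\theta\e_ae_a^2=0$), and $\theta$ itself is an extra central even generator with $\theta^2=2$. One must check that adjoining this invertible scalar (equivalently, base-changing $C$ to $C[\theta]/(\theta^2-2)$, a finite étale or at least free extension once $2$ is invertible) does not change the ungraded identities — i.e. that $\T{A\otimes_C\freeG}$ is unaffected by such a harmless central scalar extension, and that conversely the square-zero relations hidden behind $\theta$ do not produce extra identities of $\G$ beyond those of $\freeG$. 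This is exactly where the hypothesis that $2$ is invertible is used, paralleling the remark preceding the theorem. The final sentence about $\M[n]$ is then immediate by taking $A=\M[n](C)$ and using $\M[n](R)\cong\M[n](C)\otimes_C R$.
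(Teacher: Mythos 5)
Your overall strategy is the same as the paper's: decompose via the idempotents of \Pref{prop:idempotents}, observe that each corner $\Lambda_s\G_X$ is a free supercommutative algebra, and pass identities back and forth between $A\otimes_C \freeG$ and these corners. The inclusion $\T{A\otimes_C \freeG}\subseteq\T{A\otimes_C\G}$ is handled correctly: reducing a given substitution to a finite $\G_X$, splitting $A\otimes_C\G_X=\bigoplus_s A\otimes_C(\Lambda_s\G_X)$, and realizing every substitution into $\Lambda_s\G_X$ from $\freeG$ all work (and realizing them via a \emph{surjection} out of $\freeG$, as your phrasing suggests, even sidesteps flatness questions for this direction by right-exactness of $A\otimes_C{-}$; the paper instead uses an embedding $\psi\co\Lambda_s\G_X\to\freeG$ together with the basis argument below).

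The genuine gap is in the reverse inclusion. You assert that since $\freeG$ embeds into a corner of $\G$, ``every identity of $A\otimes_C\G$ specializes to one of $A\otimes_C\freeG$.'' For that you need the induced map $A\otimes_C\freeG\to A\otimes_C\G$ to be \emph{injective}, and since $A$ is an arbitrary $C$-algebra over an arbitrary commutative ring $C$ (with $2$ invertible), injectivity of a $C$-module map is not in general preserved by $A\otimes_C{-}$. This is exactly the point the paper takes care of: it writes down the explicit embedding $\phi(e_a)=\frac{1}{2}\theta\e_a e_a$ for odd generators and $\phi(e_b)=(1-\frac{1}{2}\theta\e_b)e_b$ for even ones, checks that $\phi$ carries a $C$-basis of $\freeG$ to a subset of a $C$-basis of $\G$, so that $\G/\phi(\freeG)$ is a free $C$-module, $\Tor_1^C(A,\G/\phi(\freeG))=0$, and hence $\text{id}_A\otimes\phi$ is injective. (This is also why the paper remarks that over a field one could reduce to $A=\F$ by flatness; over a general $C$ one cannot.) Your worry about $\theta$, by contrast, is essentially a non-issue: $\theta$ is just one more central even element of the corner, and no localization or base change of $C$ is needed. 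The $\M[n]$ statement is indeed immediate as you say.
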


    \begin{proof}
        We first show that any identity of $A\otimes\G$ is an identity of $A\otimes \freeG$. Indeed, define
        a homomorphism of $C$-algebras, $\phi \co \freeG\rightarrow\G$, by
        $\phi(e_a)=\frac{1}{2}\theta\e_a e_a\in\G$ for odd generators $e_a$, and $\phi(e_b)=(1-\frac{1}{2}\theta\e_b)e_b\in\G$ for
        even generators $e_b$ (note that the $e_i$ on the left hand side of this equation are elements from $\freeG$, and on the right
        hand side from $\G$). This homomorphism is clearly injective. Since
        $\freeG$, $\G$ and the image of $\phi$ are all free $C$-modules, and the image of a base of $\freeG$ under $\phi$ can be
        completed to a base of $\G$ (by considering the base of words in $\freeG$, and the base
        of words multiplied by all idempotents associated to generators in the word, possibly times $\theta$), we
        see that the map $\text{1}_A\otimes\phi:A\otimes \freeG\rightarrow A\otimes\G$ is an injective homomorphism (indeed
        $\G/\phi(\freeG)$ is a free $C$-module, so $\Tor_1^C(A,\G/\phi(\freeG))=0$). Thus,
        $\T{A\otimes_C \freeG}\supseteq\T{A\otimes_C\G}$.

            In the other direction, let $f\in\T{A\otimes_C \freeG}$, and let $x_i\mapsto\hat x_i\in\G$ be a substitution of elements from $\G$ in the variables appearing in $f$.
            Let $X$ be the (finite) collection of all the indices $j$ of all $e_j$ or $\e_j$ appearing in
            some of the $\hat x_i$. Recall the definition of the subalgebra $\G_X=C\sg{e_j,\e_j,\theta\suchthat j\in X}\subset\G$.
            By \Pref{prop:idempotents}, the idempotents $\Lambda_s$, with $s \co X \ra \set{\pm 1}$, form a complete set of idempotents for
            $\G$ (and thus $\G_X$). Then it is sufficient to consider substitutions $x_i\mapsto
        \Lambda_s\hat x_i\in\Lambda_s\G_X$ for some fixed $s \co X \ra \set{\pm 1}$. But now, \Pref{prop:idempotents} shows that
        $\Lambda_s\G_X$ is a free supercommutative algebra,
        so we can fix a canonical embedding $\psi\co \Lambda_s\G_X\rightarrow \freeG$ of $\Lambda_s\G_X$ in $\freeG$. Again, we see that it maps the base of $\Lambda_s\G_X$ into a set that can be completed to a
        base of $\freeG$ (take the base generated by $\Lambda_s$ times words in $\G_X$, and the base of words in $\freeG$).
        Hence, the map
        \[
            \text{id}_A\otimes\psi:A\otimes\Lambda_s\G_X\rightarrow A\otimes \freeG
        \]
        is an injective homomorphism. Thus, $f$ is zero on substitutions from $\Lambda_s\G_X$ and is therefore zero
        on the substitution $x_i\mapsto\Lambda_s\hat x_i\in\Lambda_s\G_X$. This completes the proof.
    \end{proof}

    \begin{rem}
        Over a field $C=\F$, this would follow from the case $A=\F$, or $\T{\freeG}=\T{\G}$, since all $\F$-modules are flat.
    \end{rem}

    \begin{rem}
    	Over a finite field, $\T{G}$ strictly contains $\T{\freeG}$. For example over $C = \F_3$, the polynomial $x^9y^3-x^3y^9$ is an identity of $G$, which does
    	not follow from the Grassmann identity.
    	
    	Indeed, working modulo $3$, if $x=x_0+x_1$ is the decomposition of $x$ to homogenous parts, then: $x^3=x_0^3+x_1^3=x_0^3$. But, the even
    	part of $G$ is spanned by $1$ and words of positive even length, so writing $x_0 = \lambda+w$, where $\lambda\in\Z_3$, we have $x^3=x_0^3=\lambda^3=\lambda$.
    	Thus, the identity becomes $\lambda\mu(\lambda^2-\mu^2)$, which is an identity of $\Z_3$. A similar construction works over any finite field.
    \end{rem}

    As an immediate corollary, we now have a proof of the following theorem, proved by Regev and Krakowsky in characteristic $0$
    \cite{Grassman_id}, and by Giambruno and Koshlukov in characteristic $p\neq 2$ \cite{Grassman_id_in_positive_char}.

    \begin{cor}
        Suppose that $2$ is invertible in $C$. Then the ideal of identities of the free supercommutative algebra, $\T{\freeG}$,
        is generated as a T-ideal by the Grassmann identity.
    \end{cor}

    \begin{proof}
        According to \Tref{thm:G_and_extended_G_equivalence}, in this case $\T{\freeG}=\T{\G}$. But we have already seen that $\T{\G}$ is generated by
        the Grassmann identity (see \Tref{thm:all_idents_of_G_are_consequences_of_grassmann}).
    \end{proof}

    \subsection{Generalized Signs}

    Now that we have a clear understanding of the role taken by the $\e_i$-s, we can introduce some helpful notation.

    \begin{dfn}
        Define the map
        \[
            \exp:\text{span}_{\Z_2}\{\e_i \e_j\suchthat i,j\in\N\}\rightarrow C[\e]
        \]
        by
        \begin{enumerate}
            \item $\exp(0)=1$,
            \item $\exp(\e_i \e_j)=1-\e_i \e_j$,
            \item $\exp(a+b)=\exp(a)\exp(b)$.
        \end{enumerate}
        In addition, if $w\in\G$ is a word in the generators, $w=e_{i_1}\cdots e_{i_n}$, then define: $\e_w=\e_{i_1}+\dots+\e_{i_n}$.
        Clearly, for any two such words $w$ and $w'$, we have $\e_w\e_{w'} \in \text{span}_{\Z_2}\{\e_i \e_j\}$.
    \end{dfn}

    \begin{rem}\label{rem:exp2}
    The exponent, a-priori defined on $\text{span}_{\Z}\{\e_i \e_j\suchthat i,j\in\N\}$, is well defined over $\Z_2$
    because $\exp(2\e_i \e_j)=(1-\e_i \e_j)^2=1-2\e_i \e_j+\e_i^2 \e_j^2=1-2\e_i \e_j+2\e_i \e_j=1$. For the same reason, $\exp(a)^2 = \exp(2a) = 1$ for every $a$.
    \end{rem}

    The following computation generalizes \Rref{rem:sign_of_transposition}.
    \begin{prop} \label{prop:exponent_property}
    	For any two monomials $u,w\in\G$ in the generators $e_i$,
    	\[
    		uw=\exp(\e_u\e_w)wu.
    	\]
    \end{prop}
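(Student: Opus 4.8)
The plan is to bootstrap from the basic transposition relation \eqref{rem:sign_of_transposition}, which in the notation just introduced reads $e_j e_i = \exp(\e_i\e_j)\,e_i e_j$. Two features will be used throughout: first, $\exp(\e_i\e_j) = 1-\e_i\e_j$ is its own inverse by \Rref{rem:exp2}, so this relation may be applied in either direction; second, the $\e_i$, and hence all values of $\exp$, are central in $\G$, so any scalar factors produced by transpositions may be collected at the front of a product.

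First I would dispose of the case $w = e_j$, a single generator. Writing $u = e_{i_1}\cdots e_{i_m}$, one moves $e_j$ leftward past the factors of $u$ one at a time: the move of $e_j$ past $e_{i_p}$ replaces $e_{i_p}e_j$ by $\exp(\e_{i_p}\e_j)e_j e_{i_p}$, contributing the central scalar $\exp(\e_{i_p}\e_j)$. Collecting all of these,
\[
  u\,e_j = \Bigl(\prod_{p=1}^{m}\exp(\e_{i_p}\e_j)\Bigr)\,e_j\,u = \exp\Bigl(\sum_{p=1}^{m}\e_{i_p}\e_j\Bigr)e_j\,u = \exp(\e_u\e_j)\,e_j\,u,
\]
where the middle equality is the defining property that $\exp$ sends sums to products, and the last uses $\e_u\e_j=\sum_{p}\e_{i_p}\e_j$. (Formally this is a one-line induction on $m$; the case $m=0$ reads $\e_u = 0$, $\exp(0)=1$.)

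Then I would induct on $\len(w)$, the case $\len(w)\le 1$ being the previous paragraph (and $w=1$ trivial). For $\len(w)=n\ge 2$, write $w = w'e_j$ with $\len(w')=n-1$; applying the induction hypothesis to $w'$, then the single-generator case to push $e_j$ past $u$, and then the centrality of $\exp(\e_u\e_j)$,
\[
  uw = (uw')e_j = \exp(\e_u\e_{w'})\,w'\,u\,e_j = \exp(\e_u\e_{w'})\exp(\e_u\e_j)\,w'e_j\,u = \exp(\e_u\e_{w'}+\e_u\e_j)\,wu.
\]
Since $\e_w = \e_{w'}+\e_j$ and multiplication in $C[\e]$ distributes, $\e_u\e_{w'}+\e_u\e_j = \e_u\e_w$; both summands lie in $\text{span}_{\Z_2}\{\e_i\e_j\}$, so the rule $\exp(a+b)=\exp(a)\exp(b)$ applies and we obtain $uw=\exp(\e_u\e_w)\,wu$.

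I do not anticipate a real obstacle here — the content is a careful bookkeeping of transposition signs. The points deserving attention are that the factors $\exp(\e_{i_p}\e_j)$ are genuinely central (so collecting them at the front is legitimate), that $\exp$ is multiplicative only on the $\Z_2$-span of the products $\e_i\e_j$ so one must verify all arguments stay in that span, and that \eqref{rem:sign_of_transposition} is being used in both directions, which is exactly what $\exp(\e_i\e_j)^2=1$ provides.
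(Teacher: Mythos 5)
Your proof is correct and follows essentially the same route as the paper's: reduce to the case where one factor is a single generator by repeatedly applying the transposition relation \eqref{rem:sign_of_transposition} and collecting the central factors $\exp(\e_i\e_j)$ via multiplicativity of $\exp$, then induct on the length of the other factor. The only difference is cosmetic — you peel letters off $w$ while the paper first fixes $u=e_i$ and then peels letters off $u$ — so the two arguments are mirror images of one another.
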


    \begin{proof}
        \Rref{rem:sign_of_transposition} proves the case $u=e_i$, $w=e_j$. Let us verify the claim for $u=e_i$, $w=e_{j_1}\cdots e_{l_m}$. Indeed, we see that
        \begin{equation*}
            \begin{split}
                uw=e_i e_{j_1}e_{j_2}\cdots e_{j_m}&=\exp(\e_i\e_{j_1})e_{j_1}e_i e_{j_2}\cdots e_{j_m}\\
                &=\exp(\e_i\e_{j_1})\exp(\e_i\e_{j_2})e_{j_1}e_{j_2}e_i\cdots e_{j_m}=\dots\\
                &=\exp(\e_i\e_{j_1})\exp(\e_i\e_{j_2})\cdots\exp(\e_i\e_{j_m})e_{j_1}e_{j_2}\cdots e_{j_m}e_i\\
                &=\exp(\e_i(\e_{j_1}+\dots+\e_{j_m}))wu\\
                &=\exp(\e_u\e_w)wu.
            \end{split}
        \end{equation*}

        Now, let $u=e_{i_1}\cdots e_{i_n}$, $w=e_{j_1}\cdots e_{j_m}$. Then:
        \begin{eqnarray*}
            uw & = & e_{i_1}\cdots e_{i_{n-1}}e_{i_n}w \\
            & = &   \exp(\e_{i_n}\e_w)e_{i_1}\cdots e_{i_{n-1}}we_{i_n}=\exp(\e_{i_n}\e_w)\exp(\e_{i_{n-1}}\e_w)e_{i_1}\cdots we_{i_{n-1}}e_{i_n} \\
            & = & \dots=\exp(\e_{i_n}\e_w)\cdots\exp(\e_{i_1}\e_w)we_{i_1}\cdots e_{i_n}=\exp(\e_u\e_w)wu.
        \end{eqnarray*}
    \end{proof}

    Let us introduce a further generalization of the exponent map,
    which we call a generalized sign. We use the natural action of the
    infinite symmetric group $S_\N$ on $C[\e]$ by $\phi_\s(\theta) =
    \theta$ and
    \[
    	\phi_\sigma(\e_i)=\e_{\sigma(i)}.
    \]

    \begin{dfn}
    		Let $w=(w_1,\dots,w_n)$ be an $n$-tuple of words in the generators
    		$e_i$. For $\sigma\in S_n$, a permutation on the set
    		$\{1,\dots,n\}$, we define the \emph{generalized sign} to be:
        \begin{equation*}
             \esgn{w}{\sigma}=\exp\left(\displaystyle\sum_{\stackrel{i<j}{\sigma(i)>\sigma(j)}}\e_{w_{\sigma(i)}}\e_{w_{\sigma(j)}}\right).
        \end{equation*}
    \end{dfn}

    \begin{prop} \label{prop:sign_properties}
    		Let $w=(w_1,\dots,w_n)$ be a $n$-tuple of words in the generators
    		$e_i$.
        \begin{enumerate}
            \item \label{sign_main_property} For every $\s \in S_n$,
                \begin{equation*}
                    w_{\sigma(1)}w_{\sigma(2)}\cdots w_{\sigma(n)}=\esgn{w}{\sigma}w_1 w_2\cdots w_n.
                \end{equation*}
            \item \label{general_sign_left_multiplication}
    						For every $\sigma,\tau\in S_n$,
                \begin{equation*}
                     \esgn{w}{\sigma\tau}=\esgn{w}{\sigma}\esgn{\sigma(w)}{\tau}
                \end{equation*}
     						where $\sigma(w)=(w_{\sigma(1)},\dots,w_{\sigma(n)})$.
            \item \label{sign_left_multiplication} In particular, when $w=(e_1,\dots,e_n)$,
                \begin{equation*}
                     \esgn{w}{\sigma\tau}=\esgn{w}{\sigma}\phi_\sigma(\esgn{w}{\tau}).
                \end{equation*}
        \end{enumerate}
    \end{prop}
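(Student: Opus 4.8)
The plan is to prove parts~\ref{sign_main_property}, \ref{general_sign_left_multiplication} and \ref{sign_left_multiplication} in that order, the last being an immediate specialization of the second. First I would record two convenient reformulations. Substituting $k=\sigma(i)$, $l=\sigma(j)$ in the defining sum, and using that $\e_{w_k}\e_{w_l}$ is symmetric in $k,l$, one has
\[
  \esgn{w}{\sigma} = \exp\!\left(\sum_{\substack{k>l\\ \sigma^{-1}(k)<\sigma^{-1}(l)}} \e_{w_k}\e_{w_l}\right),
\]
so $\esgn{w}{\sigma}$ records precisely the pairs of letters that appear ``out of order'' in $w_{\sigma(1)}\cdots w_{\sigma(n)}$ relative to $w_1\cdots w_n$, each weighted by its own $\e$-color. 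Moreover, by \Rref{rem:exp2} the map $\exp$ factors through $\Z_2$ and $\exp(a)^2=1$; hence all the sums below may be computed modulo $2$, and every $\esgn{w}{\sigma}$ equals its own inverse.

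For part~\ref{sign_main_property} I would induct on the number of inversions $\ell(\sigma)$. If $\sigma=\operatorname{id}$, both sides are $w_1\cdots w_n$ and $\esgn{w}{\operatorname{id}}=\exp(0)=1$. Otherwise choose a position $k$ with $\sigma(k)>\sigma(k+1)$, let $s_k$ be the transposition $(k,k+1)$, and set $\tau=\sigma s_k$, so $\ell(\tau)=\ell(\sigma)-1$ and $\sigma(k)=\tau(k+1)$, $\sigma(k+1)=\tau(k)$. Then $w_{\sigma(1)}\cdots w_{\sigma(n)}$ is obtained from $w_{\tau(1)}\cdots w_{\tau(n)}$ by transposing the two adjacent letters $w_{\tau(k+1)},w_{\tau(k)}$; applying \Pref{prop:exponent_property} to that single swap and pulling the resulting central factor of $C[\e]$ to the front gives, via the induction hypothesis,
\[
  w_{\sigma(1)}\cdots w_{\sigma(n)} = \exp\!\left(\e_{w_{\tau(k+1)}}\e_{w_{\tau(k)}}\right) w_{\tau(1)}\cdots w_{\tau(n)} = \exp\!\left(\e_{w_{\tau(k+1)}}\e_{w_{\tau(k)}}\right)\esgn{w}{\tau}\, w_1\cdots w_n .
\]
It then remains to verify $\esgn{w}{\sigma}=\exp\!\left(\e_{w_{\tau(k+1)}}\e_{w_{\tau(k)}}\right)\esgn{w}{\tau}$, which I would do by comparing the two defining sums term by term: position pairs avoiding $\{k,k+1\}$ contribute identical $\e$-colors to both (the colors depend only on the value pair, and the inversion status agrees); for each $i\notin\{k,k+1\}$ the contribution of the position pair $\{i,k\}$ to $\esgn{w}{\sigma}$ equals that of $\{i,k+1\}$ to $\esgn{w}{\tau}$ and vice versa, so these cancel modulo $2$; and finally $\{k,k+1\}$ is an inversion of $\sigma$ but not of $\tau$ (because $\tau(k)<\tau(k+1)$), accounting for the extra factor $\exp\!\left(\e_{w_{\tau(k+1)}}\e_{w_{\tau(k)}}\right)$.

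For part~\ref{general_sign_left_multiplication} I would argue directly from the definition, modulo $2$. Put $\rho=\sigma\tau$; the tuple $\sigma(w)$ has $\e$-shadows $(\e_{w_{\sigma(1)}},\dots,\e_{w_{\sigma(n)}})$, so for a pair of positions $i<j$ the three signs in question all involve only the single color $c_{\{i,j\}}=\e_{w_{\rho(i)}}\e_{w_{\rho(j)}}$. By the definitions, $\{i,j\}$ contributes $c_{\{i,j\}}$ to $\esgn{w}{\sigma\tau}$ exactly when $\rho(i)>\rho(j)$ (Boolean $B$), to $\esgn{\sigma(w)}{\tau}$ exactly when $\tau(i)>\tau(j)$ (Boolean $A$), and to $\esgn{w}{\sigma}$ --- after re-indexing its defining sum by the position pair $\{\tau(i),\tau(j)\}$ --- exactly when $\tau(i)-\tau(j)$ and $\rho(i)-\rho(j)$ have opposite signs, i.e. exactly when $A\oplus B$. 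Since $(A\oplus B)+A\equiv B\pmod 2$, each color $c_{\{i,j\}}$ appears with the same parity in $\esgn{w}{\sigma\tau}$ and in $\esgn{w}{\sigma}\esgn{\sigma(w)}{\tau}$; applying $\exp$ gives part~\ref{general_sign_left_multiplication}. Part~\ref{sign_left_multiplication} is then the case $w=(e_1,\dots,e_n)$: there $\sigma(w)=(e_{\sigma(1)},\dots,e_{\sigma(n)})$ has shadows $\e_{\sigma(k)}=\phi_\sigma(\e_k)$, and since $\phi_\sigma$ is a ring automorphism of $C[\e]$ commuting with $\exp$, we get $\esgn{\sigma(w)}{\tau}=\phi_\sigma(\esgn{w}{\tau})$ directly from the definition.

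The step I expect to be the main obstacle is the term-by-term comparison in part~\ref{sign_main_property}. The subtlety is that the inversion sets of $\sigma$ and of $\tau=\sigma s_k$, viewed as sets of position pairs, are \emph{not} simply related by adjoining $\{k,k+1\}$; what rescues the argument is that the weight attached to a position pair depends only on the corresponding pair of \emph{values} $\e_{w_{\sigma(i)}}\e_{w_{\sigma(j)}}$, so the contributions near positions $k$ and $k+1$ pair off and cancel. Part~\ref{general_sign_left_multiplication} is morally the statement that $\sigma\mapsto\esgn{w}{\sigma}$ is a $1$-cocycle for the relevant $S_n$-action; with the indexing arranged as above it collapses to the trivial identity $(A\oplus B)+A\equiv B$.
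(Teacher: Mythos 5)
Your proof is correct and follows essentially the same route as the paper's: part~\ref{sign_main_property} by induction on adjacent transpositions using \Pref{prop:exponent_property} (the paper inducts on the length of an arbitrary Coxeter word rather than on $\ell(\sigma)$, which is immaterial), and part~\ref{general_sign_left_multiplication} by the same mod-$2$ inversion-set bookkeeping that the paper phrases as re-indexing the defining sum by $\tau$. Your explicit pairing of position pairs $\{i,k\}\leftrightarrow\{i,k+1\}$ and the $(A\oplus B)+A\equiv B$ formulation just make precise what the paper states more tersely.
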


    \begin{proof}
    		Write $\sigma=s_1\cdots s_m$ where $s_j=(k_j,k_j+1)$ are Coxeter
    		generators of $S_n$. We prove \ref{sign_main_property} by
    		induction on $m$. For $m=0$, the claim is trivial. Assume the
    		claim holds for $\pi=s_1\cdots s_{m-1}$. Then according to
    		\Pref{prop:exponent_property} and since $s_m$ transposes
    		$w_{\pi(k_m)}$ and $w_{\pi(k_m+1)}$, we have:
    		\begin{eqnarray*}
             w_{\sigma(1)}w_{\sigma(2)}\cdots w_{\sigma(n)}
            & = & w_{\pi s_m(1)}w_{\pi s_m(2)}\cdots w_{\pi s_m(n)}\\
            & = & w_{\pi(1)}w_{\pi(2)}\cdots w_{\pi(k_m-1)}w_{\pi(k_m+1)}w_{\pi(k_m)}w_{\pi(k_m+2)}\cdots w_{\pi(n)}\\
            & = & \exp\left(\e_{w_{\pi(k_m)}}\e_{w_{\pi(k_m+1)}}\right)w_{\pi(1)}w_{\pi(2)}\cdots
              w_{\pi(k_m-1)}w_{\pi(k_m)}w_{\pi(k_m+1)}\cdots w_{\pi(n)}\\
            & = & \exp\left(\e_{w_{\sigma(k_m)}}\e_{w_{\sigma(k_m+1)}}\right)w_{\pi(1)}w_{\pi(2)}\cdots w_{\pi(n)},\\
        		& = & \exp\left(\e_{w_{\sigma(k_m)}}\e_{w_{\sigma(k_m+1)}}\right)\esgn{w}{\pi}w_1 w_2 \cdots w_{n},
        \end{eqnarray*}
    		where the last equality follows from the induction hypothesis.
    		Acting by $s_m=(k_m,k_m+1)$ does not affect the order of any of the pairs
    		$i<j$, except for flipping the order
        of the pair $k_m,k_m+1$. Thus,
        \begin{eqnarray*}
             \exp\left(\e_{w_{\sigma(k_m)}}\e_{w_{\sigma(k_m+1)}}\right)\esgn{w}{\pi}
            & = & \exp\left(\e_{w_{\sigma(k_m)}}\e_{w_{\sigma(k_m+1)}}\right)\exp\left(\displaystyle\sum_{\stackrel{i<j}
                {\pi(i)>\pi(j)}} \e_{w_{\pi(i)}}\e_{w_{\pi(j)}}\right)\\
            & = & \exp\left(\e_{w_{\sigma(k_m)}}\e_{w_{\sigma(k_m+1)}}+\displaystyle\sum_{\stackrel{s_m(i)<s_m(j)}
                {\pi s_m(i)>\pi s_m(j)}} \e_{w_{\pi s_m(i)}}\e_{w_{\pi s_m(j)}}\right)\\
            & = & \exp\left(\e_{w_{\sigma(k_m)}}\e_{w_{\sigma(k_m+1)}}+\displaystyle\sum_{\stackrel{s_m(i)<s_m(j)}
                {\sigma(i)>\sigma(j)}} \e_{w_{\sigma(i)}}\e_{w_{\sigma(j)}}\right)\\
            & = & \exp\left(\displaystyle\sum_{\stackrel{i<j}{\sigma(i)>\sigma(j)}}\e_{w_{\sigma(i)}}\e_{w_{\sigma(j)}}\right)=\esgn{w}{\sigma},
        \end{eqnarray*}
        as claimed.

    		To prove~\ref{general_sign_left_multiplication}, we compute
        \begin{align*}
            \esgn{w}{\sigma} \esgn{\sigma(w)}{\tau} & =
            & \exp\left(\displaystyle\sum_{\stackrel{i<j}{\sigma(i)>\sigma(j)}}\e_{w_{\sigma(i)}}\e_{w_{\sigma(j)}}+
                 \sum_{\stackrel{i<j}{\tau(i)>\tau(j)}}\e_{\sigma(w)_{\tau(i)}}\e_{\sigma(w)_{\tau(j)}}\right) \\
           & = &\exp\left(\displaystyle\sum_{\stackrel{i<j}{\sigma(i)>\sigma(j)}}\e_{w_{\sigma(i)}}\e_{w_{\sigma(j)}}+
                 \sum_{\stackrel{i<j}{\tau(i)>\tau(j)}}\e_{w_{\sigma\tau(i)}}\e_{w_{\sigma\tau(j)}}\right)\\
            & = &\exp\left(\displaystyle\sum_{\stackrel{\tau(i)<\tau(j)}{\sigma\tau(i)>\sigma\tau(j)}}\e_{w_{\sigma\tau(i)}}\e_{w_{\sigma\tau(j)}}+
                 \sum_{\stackrel{i<j}{\tau(i)>\tau(j)}}\e_{w_{\sigma\tau(i)}}\e_{w_{\sigma\tau(j)}}\right).
        \end{align*}
        But since each pair $i<j$ whose order is inverted by $\sigma\tau$ is inverted by $\sigma$ or by $\tau$, we have that
        \begin{multline*}
                 \exp\left(\displaystyle\sum_{\stackrel{\tau(i)<\tau(j)}{\sigma\tau(i)>\sigma\tau(j)}}\e_{w_{\sigma\tau(i)}}\e_{w_{\sigma\tau(j)}}+
                 \sum_{\stackrel{i<j}{\tau(i)>\tau(j)}}\e_{w_{\sigma\tau(i)}}\e_{w_{\sigma\tau(j)}}\right)=\\
            \exp\left(\displaystyle\sum_{\stackrel{i<j}{\sigma\tau(i)>\sigma\tau(j)}}\e_{w_{\sigma\tau(i)}}\e_{w_{\sigma\tau(j)}}\right)=
            \esgn{w}{\sigma\tau}.
        \end{multline*}
    \end{proof}

    In order to see that the generalized sign $\esgn{w}{\cdot}$ is correct generalization of the notion of signs, note that in $G$,
    we have $e_{\sigma(1)}\cdots e_{\sigma(n)} =\sgn{\sigma}e_1\cdots e_n$. Furthermore, the idempotent corresponding to the constant
    function $s(i) = -1$ ($i=1,\dots,n$) satisfies
    \[
    	\esgn{(e_1,\dots,e_n)}{\sigma}\Lambda_s=\sgn{\sigma}\Lambda_s,
    \]
    since the $e_i$ anticommute in the presence of $\Lambda_{s}$.

    \subsection{The Co-module Sequence of \texorpdfstring{$\G$}{G}}

    We now turn our attention to the co-modules and
    co-dimensions of $\G$. We begin by defining an
    $S_n$-representation analogous to the usual sign representation:

    \begin{dfn}
    Fix $w=(e_1,\dots,e_n)$. We consider the natural action of $S_n$
    on $C[\e]$ twisted by signs: For each $\sigma\in S_n$ and
    $\lambda\in C[\e]$,
        \begin{equation}\label{X2}
            \sigma(\lambda)=\esgn{w}{\sigma}\phi_\sigma(\lambda).
        \end{equation}
    Also let $C[\e]_n$ denote the $S_n$-submodule of $C[\e]$ generated
    as a module by $1\in C[\e]$.
    \end{dfn}

    \begin{rem}
        According to \Pref{prop:sign_properties}.\ref{sign_left_multiplication}, this indeed gives $C[\e]$ an $S_n$-module
        structure, as
        \begin{eqnarray*}
            (\s\tau)(\lambda) & = & \esgn{w}{\sigma\tau}\phi_{\sigma\tau}(\lambda)\\
            & = & \esgn{w}{\sigma}\phi_\s(\esgn{w}{\tau})\phi_\s(\phi_\tau(\lambda))\\
            & = & \esgn{w}{\sigma}\phi_\s(\esgn{w}{\tau}\phi_\tau(\lambda))\\
            & = & \sigma(\esgn{w}{\tau}\phi_{\tau}(\lambda)) = \sigma(\tau(\lambda)).
        \end{eqnarray*}
    \end{rem}

    \begin{exm} \label{exm:extended_signs_of_size_3}
        Consider the $S_3$-module $C[\e]_3$. By definition $C[\e]_3$ is spanned as a $C$-module by the elements $\sigma(1)=\esgn{w}{\sigma}$:
        \begin{eqnarray*}
            \esgn{w}{1} & = & 1,\\
            \esgn{w}{(1\,2)} & = & \exp(\e_1\e_2)=1-\e_1\e_2,\\
            \esgn{w}{(2\,3)} & = & \exp(\e_2\e_3)=1-\e_2\e_3,\\
            \esgn{w}{(1\,3)} & = & \exp(\e_1\e_2+\e_2\e_3+\e_1\e_3)=(1-\e_1\e_2)(1-\e_2\e_3)(1-\e_1\e_3)\\
                & = & 1-\e_1\e_2-\e_2\e_3-\e_1\e_3+\theta\e_1\e_2\e_3,\\
            \esgn{w}{(1\,2\,3)} & = & \exp(\e_1(\e_2+\e_3))=(1-\e_1\e_2)(1-\e_1\e_3)\\
                & = & 1-\e_1\e_2-\e_1\e_3+\theta\e_1\e_2\e_3,\\
                \esgn{w}{(1\,3\,2)} & = & \exp(\e_3(\e_1+\e_2))=(1-\e_1\e_3)(1-\e_2\e_3)\\
                & = & 1-\e_1\e_3-\e_2\e_3+\theta\e_1\e_2\e_3.
        \end{eqnarray*}
        Therefore, $C[\e]_3$ is a free $C$-module of rank $4$, spanned by $1$, $\e_1\e_2$, $\e_2\e_3$ and
        $\e_1\e_3-\theta\e_1\e_2\e_3$.
    \end{exm}

    We can now state the main result of this section.

    \begin{thm} \label{thm:comodule_of_G}
        The $n$-th co-module of $\G$ is isomorphic, as an $S_n$-module, to $C[\e]_n$.
    \end{thm}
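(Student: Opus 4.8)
The plan is to exhibit an explicit surjective homomorphism of $S_n$-modules $\Phi\co P_n\to C[\e]_n$ (recall that the $n$-th co-module of $\G$ is $P_n/(\T{\G}\cap P_n)$), to show that $\ker\Phi=\T{\G}\cap P_n$, and to conclude by the first isomorphism theorem. Define $\Phi$ on the standard $C$-basis of $P_n$ by $\Phi(x_{\sigma(1)}x_{\sigma(2)}\cdots x_{\sigma(n)})=\esgn{w}{\sigma}$, for $w=(e_1,\dots,e_n)$, and extend $C$-linearly. By \Pref{prop:sign_properties}.\ref{sign_left_multiplication} together with the definition \eqref{X2} of the twisted $S_n$-action on $C[\e]$, the map $\Phi$ is $S_n$-equivariant; it is onto $C[\e]_n$, which by definition is the $S_n$-submodule of $C[\e]$ generated by $1$ and is therefore spanned over $C$ by the elements $\esgn{w}{\sigma}=\sigma(1)$, $\sigma\in S_n$. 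Equivalently, by \Pref{prop:sign_properties}.\ref{sign_main_property} one has $\Phi(f)\cdot e_1 e_2\cdots e_n=f(e_1,\dots,e_n)$ for every $f\in P_n$; this description is unambiguous because right multiplication by $e_1\cdots e_n$ is injective on the $C$-span of the squarefree monomials $\theta^{\delta}\prod_{i\in I}\e_i$ (with $I\subseteq\set{1,\dots,n}$ and $\delta\in\set{0,1}$), a span that contains $C[\e]_n$, since $\G$ is a free $C$-module whose basis of reduced words includes all of the $\theta^{\delta}\left(\prod_{i\in I}\e_i\right)e_1 e_2\cdots e_n$. With this in hand the inclusion $\T{\G}\cap P_n\subseteq\ker\Phi$ is immediate: an identity $f$ of $\G$ satisfies $f(e_1,\dots,e_n)=0$, hence $\Phi(f)=0$.

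For the reverse inclusion it suffices, by \Tref{thm:all_idents_of_G_are_consequences_of_grassmann}, to show that $\ker\Phi$ is contained in the T-ideal generated by the Grassmann identity. By the argument proving \Lref{lem:all_idents_of_G_are_consequences_of_grassmann}, modulo the Grassmann identity and its consequences every $f\in P_n$ is congruent to a $C$-linear combination $\sum_{S}c_{S}\,g_{S}$, where $S$ ranges over the even-size subsets of $\set{1,\dots,n}$ and $g_{S}=x_{k_1}\cdots x_{k_m}[x_{j_1},x_{j_2}][x_{j_3},x_{j_4}]\cdots[x_{j_{2t-1}},x_{j_{2t}}]$ with $\set{j_1<\cdots<j_{2t}}=S$ and $\set{k_1<\cdots<k_m}$ its complement. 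Since the Grassmann identity and all its consequences lie in $\T{\G}\subseteq\ker\Phi$ (\Lref{lem:G_satisfies_grass}), applying $\Phi$ to an $f\in\ker\Phi$ gives $\sum_S c_S\,\Phi(g_S)=0$. Thus the whole theorem reduces to the claim that the family $\set{\Phi(g_S)}$ is $C$-linearly independent in $C[\e]$: granting this, every $c_S$ vanishes, so $f$ lies in the T-ideal generated by the Grassmann identity, i.e.\ $f\in\T{\G}$; together with the surjectivity of $\Phi$ this gives $\ker\Phi=\T{\G}\cap P_n$ and hence $P_n/(\T{\G}\cap P_n)\cong C[\e]_n$ as $S_n$-modules.

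I expect this linear independence to be the main point, and I would prove it by a triangularity argument. Evaluating the commutators in $g_S$ via $[e_a,e_b]=\e_a\e_b e_a e_b$, pulling the central $\e$'s to the front, and reordering the remaining $e$'s with \Pref{prop:sign_properties}.\ref{sign_main_property}, one computes $\Phi(g_S)=\left(\prod_{i\in S}\e_i\right)\esgn{w}{\rho_S}$, where $\rho_S$ is the permutation whose sequence of values lists the complement of $S$ in increasing order followed by $S$ in increasing order. The only inversions of $\rho_S$ pair an index $k\notin S$ with an index $j\in S$, so $\esgn{w}{\rho_S}=\prod_{(k,j)}(1-\e_k\e_j)$ is a product over pairs with $k\notin S$; hence every nontrivial term in its expansion involves some $\e_k$ with $k\notin S$, and after multiplying by $\prod_{i\in S}\e_i$ and reducing modulo the relations $\e_i^2=\theta\e_i$ and $\theta^2=2$ one gets
\[
    \Phi(g_S)=\prod_{i\in S}\e_i+r_S,
\]
where $r_S$ is a $C$-linear combination of squarefree monomials $\theta^{\delta}\prod_{i\in I}\e_i$ whose supports satisfy $I\supsetneq S$. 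Since the squarefree monomials $\theta^{\delta}\prod_{i\in I}\e_i$ form a $C$-basis of $C[\e]$, the elements $\Phi(g_S)$ are triangular with respect to inclusion of $\e$-supports, with unit (indeed $1$) leading coefficients; choosing $S$ minimal among those appearing in a hypothetical dependence relation and reading off the coefficient of the basis vector $\prod_{i\in S}\e_i$ forces that coefficient to vanish, a contradiction. The only inputs from outside this section are the freeness of $\G$ and of $C[\e]$ as $C$-modules on their reduced (respectively squarefree) words, which are part of the basic structure theory already used above, for instance in the proof of \Tref{thm:G_and_extended_G_equivalence}.
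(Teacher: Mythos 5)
Your map $\Phi$ is exactly the paper's $\psi=\nu\circ\mu$, and your equivariance and surjectivity arguments coincide with the paper's; the genuine divergence is in proving $\ker\Phi\subseteq\T{\G}\cap P_n$. The paper gets this from \Lref{lem:e_i_are_generic}: the endomorphisms $\eta_w$ of \Lref{lem:generization_of_G} make the tuple $(e_1,\dots,e_n)$ generic, so a multilinear polynomial vanishing on it is automatically an identity. You instead reduce $f$ modulo the Grassmann T-ideal to the normal form $\sum_S c_S g_S$ (the reduction half of \Lref{lem:all_idents_of_G_are_consequences_of_grassmann}) and prove directly that the images $\Phi(g_S)=\bigl(\prod_{i\in S}\e_i\bigr)\esgn{w}{\rho_S}$ are $C$-independent by triangularity of $\e$-supports. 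That computation checks out: every inversion of $\rho_S$ pairs a complement index with an $S$-index, so each non-leading term of the expansion carries some $\e_k$ with $k\notin S$, and the relations $\e_i^2=\theta\e_i$, $\theta^2=2$ cannot shrink supports (for $n=3$ your $\Phi(g_S)$ reproduce the basis $1$, $\e_1\e_2$, $\e_2\e_3$, $\e_1\e_3-\theta\e_1\e_2\e_3$ of \Exref{exm:extended_signs_of_size_3}). Your route buys more than the paper's: it delivers \Tref{thm:codim_of_G} simultaneously (the $\Phi(g_S)$ become an explicit $C$-basis of $C[\e]_n$ of size $2^{n-1}$), and it replaces the substitution argument of \Lref{lem:all_idents_of_G_are_consequences_of_grassmann} by a structural computation inside $C[\e]$; the paper's route is shorter given its lemmas and establishes the stronger genericity statement, which is reused elsewhere. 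The price you pay is the explicit appeal to freeness of $C[\e]$ on squarefree monomials and to injectivity of right multiplication by $e_1\cdots e_n$ on the relevant span; but the paper needs the latter just as much, tacitly, for its map $\nu(\lambda e_1\cdots e_n)=\lambda$ to be well defined, so this is not an additional gap.
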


    To prove the theorem, we will first establish that a multilinear polynomial that vanishes on $e_1,\dots,e_n$ vanishes on any
    other substitution. Since $S_n$ acts on the space $P_n$ defined in \eq{Pndef} by reordering variables, and since reordering
    variables multiplies by the generalized sign, \Tref{thm:comodule_of_G} follows (as will be explained below).

    We observe that $\G$ has plenty of endomorphisms.

    \begin{lem} \label{lem:generization_of_G}
        For any $n$-tuple of words $w=(w_1,\dots,w_n)$ in the generators $e_i$, there is a morphism $\eta_w:\G\rightarrow\G$ such that
        for all $1\leq i\leq n$:
        \begin{equation*}
            \eta_w(e_i)=w_i.
        \end{equation*}
    \end{lem}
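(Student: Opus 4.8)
The plan is to build $\eta_w$ explicitly, by prescribing its effect on a generating set of $\G$ and checking that the defining relations are respected. Unwinding \Dref{def:C_epsilon} and \Dref{def:generalized_extended_grassmann}, $\G$ is presented as a $C$-algebra by generators $\theta,\e_1,\e_2,\dots,e_1,e_2,\dots$ subject to the relations that $\theta$ is central with $\theta^2=2$, that each $\e_i$ is central with $\e_i^2=\theta\e_i$, and that $[e_i,e_j]=\e_i\e_j e_i e_j$; so a $C$-algebra morphism out of $\G$ amounts to a choice of images of these generators satisfying the same relations. I would send $\theta\mapsto\theta$; for $i>n$, $e_i\mapsto e_i$ and $\e_i\mapsto\e_i$; and for $i\le n$, $e_i\mapsto w_i$ together with $\e_i\mapsto\hat\e_{w_i}$, where $\hat\e_{w_i}\in C[\e]$ is an element, constructed below, that encodes the sign data a word must carry in order to behave like a single generator.

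To construct $\hat\e_w$, consider on the set of $\lambda\in C[\e]$ satisfying $\lambda^2=\theta\lambda$ the operation $\lambda\ast\mu:=\lambda+\mu-\theta\lambda\mu$. A direct computation using only $\theta^2=2$ shows $(\lambda\ast\mu)^2=\theta(\lambda\ast\mu)$ and that $\ast$ is commutative and associative, so for a word $w=e_{\ell_1}\cdots e_{\ell_k}$ the element $\hat\e_w:=\e_{\ell_1}\ast\cdots\ast\e_{\ell_k}$ (with $\hat\e_1:=0$) is well defined, satisfies $\hat\e_w^2=\theta\hat\e_w$, and equals $\e_i$ on the one-letter word $e_i$. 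The crux is then the identity
\[
  \hat\e_u\hat\e_v=1-\exp(\e_u\e_v)
\]
for all words $u,v$ in the generators, which I would prove by induction on $\len(u)+\len(v)$: the base case $u=e_i$, $v=e_j$ is immediate from the definition of $\exp$ (reading $\e_i\e_i=\theta\e_i$ when $i=j$); and in the inductive step, using that the claimed identity is symmetric in $u$ and $v$, one peels off the first letter of a word of length $\ge2$, say $u=e_iu'$, expands $\hat\e_u=\e_i+\hat\e_{u'}-\theta\e_i\hat\e_{u'}$, applies the induction hypothesis to $\e_i\hat\e_v$ and $\hat\e_{u'}\hat\e_v$, and uses $\exp(a+b)=\exp(a)\exp(b)$ to see that the difference of the two sides equals $\e_i\hat\e_{u'}(\hat\e_v^2-\theta\hat\e_v)=0$. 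The one algebraic fact powering all these cancellations is that $x(x-\theta)=0$ whenever $x^2=\theta x$.

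Given the identity, checking the relations for the prescribed images is quick. The relations involving only $\theta$ and the $\e_i$ hold because $\hat\e_{w_i}^2=\theta\hat\e_{w_i}$ and $C[\e]$ is commutative with $\theta$ central. For the commutator relation with $a,b\le n$: \Pref{prop:exponent_property} gives $w_bw_a=\exp(\e_{w_a}\e_{w_b})w_aw_b$, hence $[w_a,w_b]=(1-\exp(\e_{w_a}\e_{w_b}))w_aw_b$, and by the identity $\hat\e_{w_a}\hat\e_{w_b}=1-\exp(\e_{w_a}\e_{w_b})$, so $[w_a,w_b]=\hat\e_{w_a}\hat\e_{w_b}w_aw_b$ — exactly the image of $[e_a,e_b]=\e_a\e_b e_ae_b$. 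The mixed cases ($a\le n<b$, $b\le n<a$) are identical, using $\hat\e_{e_b}=\e_b$ so the identity still applies, and for $a,b>n$ the relation is mapped to itself. Hence all relations are satisfied, the morphism $\eta_w\co\G\to\G$ exists, and $\eta_w(e_i)=w_i$ for $1\le i\le n$ by construction.

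The hard part is the identity $\hat\e_u\hat\e_v=1-\exp(\e_u\e_v)$, and specifically arranging the induction so that repeated or shared indices between $u$ and $v$ require no special treatment — this is precisely what the conventions $\e_i\e_i=\theta\e_i$ and $x^2=\theta x$ buy us; everything after that is bookkeeping. (When $2$ is invertible one could instead verify the identity piecewise over the idempotent decomposition of \Pref{prop:idempotents}, where $\hat\e_w$ degenerates to $\theta$ or $0$ according to the parity of the number of letters of $w$ declared odd; but the $\ast$-construction has the virtue of working over an arbitrary base ring $C$.)
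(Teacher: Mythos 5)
Your proof is correct, and its computational core coincides with the paper's: the element $\e_j+\e_k-\theta\e_j\e_k$ is exactly the image of $\e_\l$ that the paper assigns when sending $e_\l\mapsto e_je_k$, and the verification $(\e_j+\e_k-\theta\e_j\e_k)^2=\theta(\e_j+\e_k-\theta\e_j\e_k)$ appears there verbatim. The difference is in the architecture. The paper only ever checks the defining relations for elementary substitutions $e_\l\mapsto w$ with $\len(w)\le 2$ and all other generators fixed, and then obtains the general $\eta_w$ by \emph{composing} such endomorphisms (first blowing each $e_i$ up to a word of the right length in distinct fresh generators, then relabelling); this sidesteps any need for a closed-form description of the image of $\e_i$ for long words. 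You instead define the morphism on all generators at once, which forces you to introduce the iterated operation $\lambda\ast\mu=\lambda+\mu-\theta\lambda\mu$ and to prove the identity $\hat\e_u\hat\e_v=1-\exp(\e_u\e_v)$ by induction on $\len(u)+\len(v)$ --- the one genuinely new ingredient relative to the paper. That induction is sound: the discrepancy in the inductive step is $\pm\,\e_i\hat\e_{u'}\bigl(\hat\e_v^2-\theta\hat\e_v\bigr)=0$, and the diagonal base case works because $\exp(\e_i\e_i)=1-\theta\e_i$ is consistent with \Rref{rem:exp2}. What your route buys is a uniform treatment of repeated and shared indices and an explicit formula for $\eta_w$ on $C[\e]$; what the paper's route buys is never having to manipulate these sign elements for words of length greater than two. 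Both establish the lemma.
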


    \begin{proof}
    	First we show that for every $\l$ and for every word $w$ of length
    	$1$ or $2$, there is a homomorphism of $C$-algebras $\G \ra \G$
    	such that $e_i \mapsto e_i$ and $e_\l \mapsto w$.
    	
    	Indeed, when $w = e_j$, define the map on $C[\e]$ by $\theta
    	\mapsto \theta$, $\e_i \mapsto \e_i$ for every $i \neq \l$, and
    	$\e_l \mapsto \e_j$. This is easily seen to be well defined.
    	
    	Likewise when $w = e_j e_k$, define the map $\eta_{j,k,\l}$ by $\theta\mapsto\theta$, $\e_i\mapsto\e_i$, $e_i\mapsto e_i$ for
    	$i\neq\l$, and $\e_\l\mapsto\e_j+\e_k-\theta\e_j\e_k$, $e_\l\mapsto e_je_k$. In order to show that this homomorphism is well
    	defined, it suffices to check the relations $\e_\l^2=\theta \e_l$ and $[e_\l,e_i] = \e_\l\e_i e_\l e_i$ for all $i$. For the
    	first relation we have
    	\[
    		\eta_{j,k,\l}(\e_\l)^2=
         (\e_j+\e_k-\theta\e_j\e_k)^2=\theta\e_j+\theta\e_k-2\e_j\e_k=\theta(\e_j+\e_k-\theta\e_j\e_k)=\eta_{j,k,\l}(\theta)\eta_{j,k,\l}(\e_\l).
      \]
    	
    	As for the second relation, for $i = \ell$ we have
    	\begin{eqnarray*}
    		\eta_{j,k,\l}(\e_\l)\eta_{j,k,\l}(\e_\l) \eta_{j,k,\l}(e_\l) \eta_{j,k,\l}(e_\l) & = & (\e_j+\e_k-\theta\e_j\e_k)^2 e_j e_k e_j e_k\\
    		& = & \theta(\e_j+\e_k-\theta\e_j\e_k)(1-\e_j\e_k)e_j e_j e_k e_k\\
    		& = & \theta(\e_j+\e_k-\theta\e_j\e_k)e_j^2 e_k^2=0=\eta_{j,k,\l}([e_l,e_l])
    	\end{eqnarray*}
    	since $\theta\e_j e_j^2=\theta\e_k e_k^2=0$. For $i \neq \l$,
    	\begin{eqnarray*}
    		\eta_{j,k,\l}(\e_\l)\eta_{j,k,\l}(\e_i)\eta_{j,k,\l}( e_\l)\eta_{j,k,\l}( e_i) & = & (\e_j+\e_k-\theta\e_j\e_k)\e_i e_j e_k e_i\\
    		& = & (\e_j\e_i+\e_k\e_i-\theta\e_j\e_k\e_i)e_j e_k e_i\\
    		& = & (1-(1-\e_j\e_i)(1-\e_k\e_i))e_j e_k e_i\\
    		& = & (1-\exp(\e_v\e_i))ve_i,
    	\end{eqnarray*}
    	where $v=e_j e_k$. But by \Pref{prop:exponent_property}, we know that $(1-\exp(\e_v\e_i))ve_i=ve_i-e_i v=[v,e_i]=
    	[\eta_{j,k,\l}(e_\l),\eta_{j,k,\l}(e_i)]=\eta_{j,k,\l}([e_\l,e_i])$, so $\eta_{j,k,\l}(\e_\l\e_i e_\l e_i)=\eta_{j,k,\l}([e_\l,e_i])$,
    	as we wanted to show.

    	Now compose the morphisms defined above so that each $e_i$ is
    	mapped to a word of length $\len(w_i)$ on distinct generators, and
    	then map the generators to the respective letters in the $w_i$.
    \end{proof}

    \begin{lem} \label{lem:e_i_are_generic}
    	Let $f(x_1,\dots,x_n)\in P_n$ be any multilinear polynomial in
    	non-commutative variables (with coefficients in $C$). Then $f\in\T{\G}$ iff
    	$f(e_1,\dots,e_n)=0$.
    \end{lem}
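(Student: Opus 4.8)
The plan is to prove the nontrivial implication, the converse being immediate from the definition of a polynomial identity. So I assume $f(e_1,\dots,e_n)=0$ and want to conclude that $f$ vanishes under every substitution from $\G$. The crucial structural fact I would exploit is that $\G$ is generated as an algebra over the \emph{central} subring $C[\e]$ by the elements $e_1,e_2,\dots$; consequently every element of $\G$ is a $C[\e]$-linear combination of words (monomials) in the generators.

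First I would reduce to substitutions by monomials. Write $f=\sum_{\sigma\in S_n}\alpha_\sigma x_{\sigma(1)}\cdots x_{\sigma(n)}$ with $\alpha_\sigma\in C$, and substitute $x_i\mapsto a_i$ where $a_i=\sum_j\lambda_{ij}w_{ij}$ with $\lambda_{ij}\in C[\e]$ and $w_{ij}$ words in the $e_i$. Expanding and using that the $\lambda_{ij}$ are central yields
\[
    f(a_1,\dots,a_n)=\sum_{j_1,\dots,j_n}\lambda_{1j_1}\cdots\lambda_{nj_n}\,f(w_{1j_1},\dots,w_{nj_n}).
\]
Hence it suffices to prove that $f$ vanishes on every $n$-tuple $(w_1,\dots,w_n)$ of words in the $e_i$ (including the empty word, i.e.\ the substitution by $1\in\G$).

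Next I would invoke \Lref{lem:generization_of_G}: given a tuple $w=(w_1,\dots,w_n)$ of words, there is a $C$-algebra endomorphism $\eta_w\co\G\to\G$ with $\eta_w(e_i)=w_i$ for every $i$. Since $\eta_w$ is $C$-linear and multiplicative and $f$ has coefficients in $C$,
\[
    f(w_1,\dots,w_n)=f\bigl(\eta_w(e_1),\dots,\eta_w(e_n)\bigr)=\eta_w\bigl(f(e_1,\dots,e_n)\bigr)=\eta_w(0)=0,
\]
which completes the proof.

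There is no serious obstacle: essentially all the weight is carried by \Lref{lem:generization_of_G}, which is already in hand. The only two points needing a word of care are (i) the reduction to monomial substitutions, which rests entirely on the centrality of $C[\e]$ in $\G$; and (ii) checking that substitution by the empty word, $e_i\mapsto 1$, is also realized by an endomorphism of $\G$ — this is the trivial variant of \Lref{lem:generization_of_G} sending $\e_i\mapsto 0$, $e_i\mapsto 1$, and fixing the remaining generators, whose defining relations then hold vacuously.
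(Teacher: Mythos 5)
Your proof is correct and follows essentially the same route as the paper's: reduce to monomial substitutions by multilinearity and the centrality of $C[\e]$, then apply the endomorphisms $\eta_w$ of \Lref{lem:generization_of_G} to pull back to the substitution $x_i\mapsto e_i$. Your extra remark about realizing the empty word via $e_i\mapsto 1$, $\e_i\mapsto 0$ is a small but legitimate point of care that the paper's terse proof glosses over.
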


    \begin{proof}
    	If $f$ is an identity then obviously $f(e_1,\dots,e_n) = 0$. On
    	the other hand assume $f(e_1,\dots,e_n)= 0$. For every
    	$w_1,\dots,w_n$ we obtain $f(w_1,\dots,w_n) =
    	\eta_w(f(e_1,\dots,e_n)) = 0$, so we are done by multilinearity.
    \end{proof}

    \begin{proof}[of \Tref{thm:comodule_of_G}]
    	Let $M_n = P_n/({\T{\G} \cap P_n})$ denote the $n$-th co-module of
    	$\G$, where $P_n$ is defined in \eq{Pndef}. Define a linear
    	mapping $\mu\co M_n \rightarrow C[\e]_n e_1\cdots e_n$ by the
    	substitution $x_i\mapsto e_i$. By \Pref{prop:sign_properties},
    	$\displaystyle\sum_{\sigma\in S_n} a_\sigma x_{\sigma(1)}\cdots
    	x_{\sigma(n)}$ is mapped to $\displaystyle\sum_{\sigma\in S_n}
    	a_\sigma e_{\sigma(1)}\cdots
    	e_{\sigma(n)}=\displaystyle\sum_{\sigma\in S_n} a_\sigma
    	\esgn{w}{\sigma} e_1\cdots e_n$, where $w=(e_1,\dots,e_n)$. Let
    	$\nu \co C[\e]_n e_1\cdots e_n\rightarrow C[\e]_n$ denote the
    	isomorphism of $C$-modules defined by $\nu(\lambda e_1\cdots
    	e_n)=\lambda$. Let $\psi=\nu\circ\mu \co M_n\rightarrow C[\e]_n$.
    	We will prove that $\psi$ is an isomorphism of $S_n$-modules.

        Indeed, $\psi(\displaystyle\sum_{\sigma\in S_n} a_\sigma x_{\sigma(1)}\cdots x_{\sigma(n)})=\displaystyle
        \sum_{\sigma\in S_n} a_\sigma \esgn{w}{\sigma}$. But, for every $\pi \in S_n$,
        \begin{eqnarray*}
            \psi\pi(\displaystyle\sum_{\sigma\in S_n} a_\sigma x_{\sigma(1)}\cdots x_{\sigma(n)}) & = & \psi(\displaystyle\sum_{\sigma\in S_n}
                a_\sigma x_{\pi\sigma(1)}\cdots x_{\pi\sigma(n)})\\
            & = & \displaystyle\sum_{\sigma\in S_n} a_\sigma \esgn{w}{\pi\sigma}\\
            & = & \esgn{w}{\pi}\phi_\pi(\displaystyle\sum_{\sigma\in S_n} a_\sigma\esgn{w}{\sigma})\\
            & = & \esgn{w}{\pi}\phi_\pi(\psi(\displaystyle\sum_{\sigma\in S_n}a_\sigma x_{\sigma(1)}\cdots x_{\sigma(n)})) \\
            & = & \pi(\psi(\displaystyle\sum_{\sigma\in S_n}a_\sigma x_{\sigma(1)}\cdots x_{\sigma(n)})),
        \end{eqnarray*}
        showing that $\psi$ is a homomorphism of $S_n$-modules.

    Since $1 = \psi(x_1 \cdots x_n)$ generates $C[\e]_n$, $\psi$ is
    surjective. Injectivity follows once we show that if $f \in P_n$
    becomes zero under the substitution $x_i \mapsto e_i$ then $f$ is
    an identity, which is the content of \Lref{lem:e_i_are_generic}.
    \end{proof}

    In addition to having the co-modules of $\G$, we can already calculate its co-dimensions:

    \begin{thm} \label{thm:codim_of_G}
    The $S_n$-module $C[\e]_n$ is a free $C$-module of rank $2^{n-1}$.
    \end{thm}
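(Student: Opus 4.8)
The plan is to exhibit an explicit $C$-basis of $C[\e]_n$, combining the classical combinatorial bound for the co-dimension sequence of the Grassmann identity (which already holds over $\Z$, hence over any $C$) with an explicit independence computation inside the free $C$-module $R_n:=C[\theta,\e_1,\dots,\e_n]\subseteq C[\e]$. First I would record that $R_n$ is free over $C$ of rank $2^{n+1}$ with basis $\{\e_S,\ \theta\e_S : S\subseteq\{1,\dots,n\}\}$, where $\e_S:=\prod_{i\in S}\e_i$; this follows by iterated monic quadratic extension exactly as in the proof of \Tref{thm:G_and_extended_G_equivalence}, and $C[\e]_n$ is a $C$-submodule of $R_n$.

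\emph{The upper bound.} Since $\G$ satisfies the Grassmann identity (\Lref{lem:G_satisfies_grass}), its $n$-th co-module $M_n$ is a quotient of $P_n$ by the consequences of $[x,[y,z]]=0$. The classical reduction (valid over any commutative ring) applies: commutators are central, so bubble-sorting shows that every multilinear monomial is congruent to a $C$-combination of monomials $(\text{increasingly ordered prefix})\cdot(\text{product of commutators on the complementary set }J)$, where $J$ necessarily has even size; and by \Eqref{eq:Id1}, for a fixed even set $J$ all such products of commutators coincide, up to sign, with the one whose pairs are taken in increasing order. Hence $M_n$ is spanned by the $2^{n-1}$ elements
\[
 w_J=x_{i_1}\cdots x_{i_{n-2m}}[x_{j_1},x_{j_2}]\cdots[x_{j_{2m-1}},x_{j_{2m}}],
\]
indexed by the even subsets $J=\{j_1<\cdots<j_{2m}\}\subseteq\{1,\dots,n\}$, with $\{1,\dots,n\}\setminus J=\{i_1<\cdots<i_{n-2m}\}$. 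Tracing these through the isomorphism $M_n\cong C[\e]_n$ of \Tref{thm:comodule_of_G}, and using $[e_a,e_b]=\e_a\e_b e_a e_b$ together with \Pref{prop:sign_properties}, the image of $w_J$ equals $b_J:=\e_J\,\esgn{w}{\pi_J}$, where $w=(e_1,\dots,e_n)$ and $\pi_J\in S_n$ is the permutation listing first $\{1,\dots,n\}\setminus J$ and then $J$, each in increasing order. Therefore $C[\e]_n=\operatorname{span}_C\{b_J:|J|\text{ even}\}$.

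\emph{Linear independence.} It remains to show the $b_J$ are $C$-linearly independent; I would prove the stronger statement that $\{b_J:|J|\text{ even}\}\cup\{\e_S:|S|\text{ odd}\}\cup\{\theta\e_S:S\subseteq\{1,\dots,n\}\}$ is a $C$-basis of $R_n$. Because $\pi_J$ lists the complement of $J$ before $J$, its inversions are exactly the pairs $\{a,b\}$ with $a\notin J$, $b\in J$, $a>b$; expanding $\esgn{w}{\pi_J}=\prod_{\{a,b\}}(1-\e_a\e_b)$ over these inversions gives $b_J=\sum_{U}(-1)^{|U|}\,\e_J\!\prod_{\{a,b\}\in U}\!\e_a\e_b$, the sum over subsets $U$ of $\operatorname{Inv}(\pi_J)$. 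Each edge of $U$ has exactly one endpoint in $J$, and reducing with $\e_i^2=\theta\e_i$, $\theta^2=2$ yields $\e_J\prod_{\{a,b\}\in U}\e_a\e_b=\theta^{\,2|U|-|A|}\,\e_{J\cup A}$, where $A\subseteq\{1,\dots,n\}\setminus J$ collects the endpoints of $U$ outside $J$. Since $|A|\le|U|$, the exponent $2|U|-|A|$ is $0$ only for $U=\emptyset$, is $1$ only for $|U|=1$, and is $\ge2$ whenever $|U|\ge2$; as $\theta^2=2$, every term with $|U|\ge2$ lies in $2R_n$. Thus, in the basis $\{\e_S,\theta\e_S\}$,
\[
 b_J=\e_J-\theta\!\!\sum_{\{a,b\}\in\operatorname{Inv}(\pi_J)}\!\!\e_{J\cup\{a\}}+2\lambda_J,
\]
with $\lambda_J$ a fixed integral combination of the $\e_S$ and $\theta\e_S$; crucially, every $\e_S$ or $\theta\e_S$ occurring in $b_J$ has $S\supseteq J$, the coefficient of $\e_J$ is $1$, and every other $\e_S$ (with no $\theta$) that occurs has $S\supsetneq J$. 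Ordering the even subsets by increasing size, then the odd subsets, then the $\theta\e_S$, the change of basis from $\{\e_S,\theta\e_S\}$ to the proposed family is triangular with all diagonal entries $1$, hence has determinant $1$ and is invertible over $C$. So the proposed family is a $C$-basis of $R_n$; in particular the $b_J$ are $C$-linearly independent, and combined with $C[\e]_n=\operatorname{span}_C\{b_J\}$ this shows $C[\e]_n$ is free over $C$ of rank $2^{n-1}$.

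The one genuinely delicate point is the bookkeeping in the last displayed formula: showing that beyond the term $\e_J$ and the terms $\theta\e_{J\cup\{a\}}$, every contribution to $b_J$ lies in $2R_n$ and is supported on strictly larger subsets. This is exactly the ingredient that makes the argument uniform over all rings, including those in which $2$ is a zero-divisor (such as $\F_2$), where the shortcut via $\T{\freeG}=\T{\G}$ and the characteristic-$\neq 2$ Krakowski–Regev computation is unavailable; everything else assembles results already established in the paper.
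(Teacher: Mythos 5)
Your proof is correct, and its first half coincides with the paper's: both reduce, via centrality of commutators and \Eqref{eq:Id1}, to the $2^{n-1}$ standard elements indexed by even subsets $J$, and both push this spanning set through the isomorphism of \Tref{thm:comodule_of_G}. Where you genuinely diverge is in the independence half. The paper recycles the substitution argument from the proof of \Lref{lem:all_idents_of_G_are_consequences_of_grassmann}: a linear relation among the standard elements would be a multilinear identity of $\G$, and substituting $1$'s together with pairs $e_1,e_2$ (then quadruples, and so on) kills the coefficients layer by layer; this tacitly uses that elements such as $\e_1\e_2e_1e_2$ are not $C$-torsion in $\G$, a structural fact about $\G$ as a $C$-module that the paper leaves implicit. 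You instead compute the images $b_J=\e_J\,\esgn{w}{\pi_J}$ explicitly inside the commutative ring $R_n=C[\theta,\e_1,\dots,\e_n]$, whose freeness over $C$ (rank $2^{n+1}$, basis $\e_S,\theta\e_S$) is immediate from iterated monic quadratic extensions, and you verify unitriangularity of the change of basis. Your bookkeeping checks out: the exponent $2|U|-|A|$ is $0$ only for $U=\emptyset$, equals $1$ only for single edges, and otherwise places the term in $2R_n$ supported on subsets strictly containing $J$; reassuringly, your formula reproduces \Exref{exm:extended_signs_of_size_3} exactly, giving the basis $1$, $\e_1\e_2$, $\e_2\e_3$, $\e_1\e_3-\theta\e_1\e_2\e_3$. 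What your route buys is a fully explicit $C$-basis of $C[\e]_n$ and an independence argument that lives entirely in the transparent commutative ring $C[\e]$, uniformly over all $C$ including those where $2$ is a zero-divisor; what it costs is the $\theta$-power bookkeeping. Both proofs rely on \Tref{thm:comodule_of_G} to the same extent for the spanning step, so neither is more self-contained there.
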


    \begin{proof}
            In the proof of \Lref{lem:all_idents_of_G_are_consequences_of_grassmann}, we have seen that modulo consequences of the Grassmann
        identity, every non-commutative polynomial $f$ of degree $n$, and in particular every multilinear polynomial $f$ of degree
        $n$ is a sum of elements of the form $x_{i_1}\cdots x_{i_m}[x_{i_{m+1}},x_{i_{m+2}}]\cdots[x_{i_{n-1}},x_{i_n}]$
        where $i_1\leq\dots\leq i_m$ and we can assume that $i_{m+1}<\dots<i_n$. Therefore, they generate the $n$-th co-module
        of $\G$ as a $C$-module. Thus, if we let
        \begin{multline*}
            N=
                \text{span}_C\{x_{i_1}\cdots x_{i_m}[x_{i_{m+1}},x_{i_{m+2}}]\cdots[x_{i_{n-1}},x_{i_n}] \suchthat i_1<\dots<i_m, i_{m+1}<\dots<i_n\},
        \end{multline*}
        then $N/(N \cap \T{\G})$  is the $n$-th co-module of $\G$, which is (by
        \Tref{thm:comodule_of_G}) isomorphic to $C[\e]_n$. Hence, $C[\e]_n$ is the quotient of $N$ by all identities of $\G$. But, we have seen
        in the proof of \Lref{lem:all_idents_of_G_are_consequences_of_grassmann} that all identities of $\G$ in $N$ are zero, and hence
        $N$ is isomorphic to $C[\e]_n$.

        However, there are exactly $2^{n-1}$ polynomials in the set spanning $N$, and we have already seen that they are linearly independent: indeed, in the
        proof of \Lref{lem:all_idents_of_G_are_consequences_of_grassmann}, we have shown that if $\displaystyle\sum a_i x_{i_1}\cdots
        x_{i_m}[x_{i_{m+1}},x_{i_{m+2}}]\cdots[x_{i_{n-1}},x_{i_n}]\in N$ is an identity (in particular, a linear relation among the generators of $N$),
        then the coefficients $a_i$ are zero. Hence, they are linearly independent.
    \end{proof}

    \begin{cor}
    For any field $C=\F$ of any characteristic, the co-dimension
    sequence of $\G$ is $c_n(\G)=2^{n-1}$.
    \end{cor}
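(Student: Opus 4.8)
The plan is to read the result directly off the two theorems just proved. Over a field $C=\F$, the $n$-th co-dimension $c_n(\G)$ is by definition $\dim_\F P_n/(\T{\G}\cap P_n)$, i.e.\ the $\F$-dimension of the $n$-th co-module of $\G$. First I would invoke \Tref{thm:comodule_of_G} to identify this co-module, as an $S_n$-module, with $C[\e]_n$. Then I would invoke \Tref{thm:codim_of_G}, which says that $C[\e]_n$ is a free $C$-module of rank $2^{n-1}$; when $C=\F$ is a field, being free of rank $2^{n-1}$ is precisely being an $\F$-vector space of dimension $2^{n-1}$. Chaining the two identifications gives $c_n(\G)=\dim_\F C[\e]_n=2^{n-1}$.

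There is essentially no obstacle: both ingredients are already in hand, and the only thing to observe is the routine fact that the rank of a free module over a field equals its dimension (so the $S_n$-module structure plays no role in the numerical statement). I would close by remarking that this recovers, uniformly over every characteristic — including characteristic $2$, where the ordinary Grassmann algebra $G$ degenerates to a commutative algebra — the classical value of the Grassmann co-dimension sequence recalled in \Rref{rem:grassmann_identity}, namely $c_n=2^{n-1}$.
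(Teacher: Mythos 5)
Your proposal is correct and matches the paper's (implicit) argument exactly: the corollary is stated in the paper as an immediate consequence of \Tref{thm:comodule_of_G} and \Tref{thm:codim_of_G}, with no further proof given, and your chaining of the two results together with the observation that a free module of rank $2^{n-1}$ over a field has dimension $2^{n-1}$ is precisely what is intended.
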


    An immediate result is that we know the co-dimension of $G$, the usual Grassmann algebra, for any field of
    characteristic different than $2$, generalizing the well known classical result in characteristic $0$ (see also, for a purely
    combinatoric proof, \cite{combinatorial_codim_of_G}).

    \begin{cor}
        For any field $\F$ with $\chr\F\neq 2$, we have $c_n(G)=2^{n-1}$.
    \end{cor}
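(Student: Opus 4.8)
The plan is to deduce the corollary from the result just proved, namely that $c_n(\G)=2^{n-1}$ over an arbitrary field, once I know that $G$ and $\G$ carry the same multilinear identities whenever $2$ is invertible. The bridge is the free supercommutative algebra $\freeG$, which by the discussion in the introduction identifies with $C[y_1,y_2,\dots]\otimes_C G$.

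\textbf{Step 1: $G$ and $\freeG$ have the same multilinear identities over any commutative ring $C$.} I would prove $\T{G}\cap P_n=\T{\freeG}\cap P_n$ directly. Since $G$ sits inside $\freeG$ as $1\otimes G$, every identity of $\freeG$ restricts to an identity of $G$, which gives $\T{\freeG}\cap P_n\subseteq\T{G}\cap P_n$. Conversely, let $f\in P_n$ be a multilinear identity of $G$ and let $\hat x_1,\dots,\hat x_n\in\freeG$ be an arbitrary substitution. Write each $\hat x_i$ as a finite sum of simple tensors $q_{ij}\otimes u_{ij}$ with $q_{ij}\in C[y_1,y_2,\dots]$ and $u_{ij}\in G$. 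Expanding by multilinearity, and pulling the mutually commuting scalar factors out of each monomial, one obtains
\[
  f(\hat x_1,\dots,\hat x_n)=\sum_{(m_1,\dots,m_n)}\Big(\textstyle\prod_i q_{i,m_i}\Big)\otimes f(u_{1,m_1},\dots,u_{n,m_n}),
\]
and every $f(u_{1,m_1},\dots,u_{n,m_n})$ vanishes because $f$ is an identity of $G$. Hence $f\in\T{\freeG}$, so $\T{G}\cap P_n=\T{\freeG}\cap P_n$, and in particular $c_n(G)=c_n(\freeG)$ whenever $C$ is a field.

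\textbf{Step 2: conclude.} Now take $C=\F$ with $\chr\F\neq 2$, so that $2$ is invertible. Applying \Tref{thm:G_and_extended_G_equivalence} with $A=\F$ (so that $A\otimes_C\freeG=\freeG$ and $A\otimes_C\G=\G$) yields $\T{\freeG}=\T{\G}$. Combining this with Step 1 gives $\T{G}\cap P_n=\T{\freeG}\cap P_n=\T{\G}\cap P_n$, and therefore $c_n(G)=c_n(\G)$, which equals $2^{n-1}$ by the preceding corollary.

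I do not expect a serious obstacle; the only point needing care is that the argument must cover finite as well as infinite fields. For infinite $\F$ the equality $\T{\freeG}=\T{G}$ is already recorded in the introduction, but over a finite field $\T{G}$ strictly contains $\T{\freeG}$ — only, as Step 1 shows, by non-multilinear identities (consistent with the explicit example $x^9y^3-x^3y^9$ over $\F_3$) — so the passage to co-dimensions is unaffected.
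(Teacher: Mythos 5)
Your proposal is correct and follows essentially the same route as the paper: the paper also passes through $\T{\freeG}=\T{\G}$ (via \Tref{thm:G_and_extended_G_equivalence}) and the identification $\freeG\cong \F[y_1,\dots]\otimes_\F G$, merely asserting that this scalar extension preserves co-dimension where your Step 1 spells out the tensor-expansion argument. Your closing remark about finite fields is a nice sanity check but not needed beyond what Step 1 already establishes.
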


    \begin{proof}
        We have shown that when $2$ is invertible, $\T{\freeG}=\T{\G}$ (see
        \Tref{thm:G_and_extended_G_equivalence}) -- and since $\freeG$ is an extension by scalars of $G$, they have the same co-dimension.
    \end{proof}

    \section{Generalized Superalgebras and Generalized Supertraces}
    \ifx\separatepaper\undefined
        \numberwithin{thm}{subsection}
    \fi

    \subsection{Generalized Superalgebras}

    Now that we have the basic machinery of the generalized Grassmann
    algebra, we would like to use it to replicate the success of the
    standard Grassmann algebra in characteristic $0$. The first
    problem is that while the Grassmann algebra $G$ has a natural
    superalgebra structure, given by the words of even and odd length,
    the even-odd grading on $\G$ is uninteresting, as exemplified by
    \Lref{lem:generization_of_G}.

    Recall the definition of $C[\e]$ in \Dref{def:C_epsilon}. Taking
    advantage of the many idempotents of $C[\e]$, we choose the
    following grading.

    \begin{dfn}
    	A $C[\e]$-algebra is called a \emph{\super{}algebra} over $C$ if it is graded  by the group $\Z_2^{\oplus
    	\N}=\bigoplus_{i\in\N}\Z_2$. 
    \end{dfn}

    Our first example is the algebra $\G$ itself:
    \begin{dfn}
    	The extended Grassmann algebra is $\Z_2^{\oplus\N}$-graded by
    	letting $C[\e]$ be contained in the zero component, and setting
    	the grade of each $e_i$ to be $(0,\dots,0,1,0,\dots)$ where the
    	$1$ is in the $i$-th component. The degree of a word $w \in \G$ is
    	$g=(\deg_1 w,\deg_2 w,\dots)$ modulo $2$, where $\deg_i w$ is the
    	number of occurrences of $e_i$ in $w$.
    \end{dfn}

    The zero component is thus $\G_0 = C[\e][e_1^2,e_2^2,\dots]$, which is contained in the center of $\G$. For every
    $g=(g_1,g_2,\dots)\in\Z_2^{\oplus \N}$, let $e_g=\prod e_i^{g_i}$ and $\e_g=g_1\e_1+g_2\e_2\cdots\in\text{span}_{\Z_2}\{\e_i\}$,
    which are finite products and sums. The corresponding component $\G_g = \G_0 e_g$ is a rank $1$ module over $\G_0$, so the grading
    is ``thin''.

    \begin{dfn} \label{def:ext_super_commutator}
    Let $\eA=\bigoplus_{g\in\Z_2^{\oplus\N}}\eA_g$ be any
    \super{}algebra over $C$. We define the \emph{\super{}commutator}
    $\{a,b\}\in\eA$ for homogenous elements $a\in \eA_g$, $ b\in
    \eA_h$ by setting $\{a,b\}=ab-\exp{(\e_g\e_h)}ba$, extended
    bilinearly to all $a,b\in\eA$.

    We say that $\eA$ is \emph{\super{}commutative} if $\{a,b\}=0$
    for all $a,b\in \eA$.
    \end{dfn}

    \begin{exm}
    The extended Grassmann algebra $\G$ is \super{}commutative.
    Indeed, by \Pref{prop:exponent_property}, for any pair of words $u
    \in \G_g$ and $v \in \G_h$ we have
    $uv=\exp(\e_{g}\e_{h})vu=\exp(\e_g\e_h)vu$, or in other words,
    $\{u,v\}=0$.
    \end{exm}

    We will use regular font for the standard supertheoretic notions,
    such as $\sgn{\cdot}$, $\sCent$, $\str$, $A$, $B$, $C$, $G$, and the Fraktur
     font for the corresponding \super{}theory notions,
    $\esgn{}{\cdot}$, $\esCent$, $\estr$, $\eA$, $\eB$, $\eC$, $\G$,
    etc.

    \begin{exm}\label{exm:efreeG}
    As another example, one can consider $\efreeG$, the free
    \super{}commutative \super{}algebra on the generators $e_g^{(n)}$
    ($n = 1,2,\dots$)
        where $e_g^{(n)}\in\efreeG_g$ is a homogenous generator of the component with degree $g$. As a result, $\efreeG$ is generated by the
        generators $e_g^{(n)}$ under the relations:
        \begin{equation*}
            [e_g^{(n)},e_h^{(m)}]=(1-\exp(\e_g\e_h))e_g^{(n)}e_h^{(m)}.
        \end{equation*}
        Note that $\T{\G}=\T{\efreeG}$, because $\G\subset\efreeG$ and $\efreeG$ satisfies the Grassmann identity.
    \end{exm}

    \subsection{The Generalized Grassmann Hull}

    Now that we have an appropriate grading, we can generalize the
    Grassmann hull of an algebra (see
    \Tref{thm:kemer_superrepresentability} for the notion of the
    Grassmann hull for superalgebras). Similarly to the standard
    Grassmann hull, one can use either the Grassmann algebra or the
    free \super{}commutative algebra to define it (for an
    example in the case of $\chr=0$, see
    \cite[p.~83--85]{asymptotic_methods}). For our purposes, it will be
    more convenient to use the free \super{}commutative algebra.

    \begin{dfn}
    Let $\eA=\bigoplus_{g\in\Z_2^{\oplus\N}}\eA_g$ be a
    \super{}algebra. The generalized Grassmann hull is by definition
        \[
             \efreeG[\eA]=\bigoplus_{g\in\Z_2^{\oplus\N}}\left(\efreeG_g\otimes_{C[\e]}\eA_g\right),
        \]
    with the $\Z_2^{\oplus\N}$-grading defined by
    $\efreeG[\eA]_g=\efreeG_g\otimes_{C[\e]}\eA_g$.
    \end{dfn}

    \begin{exm}
    Let $A$ be any $C$-algebra. Tensoring with the $C[\e]$-group
    algebra $C[\e][\Z_2^{\oplus\N}]$, which is naturally a \super{}algebra over $C$, gives $A\otimes_C
    C[\e][\Z_2^{\oplus\N}]$ a natural \super{}algebra grading, where
    $(A\otimes_C C[\e][\Z_2^{\oplus\N}])_g = A \otimes_C
    (C[\e][\Z_2^{\oplus\N}])_g$ and
        \[
            A\otimes_C \efreeG=\efreeG[A\otimes_C C[\e][\Z_2^{\oplus\N}]].
        \]
    \end{exm}

    We will now define the notion of a \super{}identity:

    \begin{dfn}\label{def:sigma_id}
        Define $C[\e]\sg{x^{(g)}_1,x^{(g)}_2,\dots\suchthat g\in\Z_2^{\oplus\N}}$ to be the free \super{}algebra.
        The elements of this algebra, which is denoted by $C[\e]\sg{X^{(g)}}$ for brevity, are called \super{}polynomials.
        We will define the set of \super{}identities of any \super{}algebra $\eA$ as the intersection of all kernels of all
        grading-preserving $C[\e]$-homomorphisms $\phi:C[\e]\sg{X^{(g)}}\rightarrow \eA$, and denote it by $\T[\Sigma]{\eA}$.
    \end{dfn}

    \begin{dfn}
        For every finitely supported function $\bar{n} \co \Z_{2}^{\oplus \N} \rightarrow \N$, $g \mapsto \bar{n}^{(g)}$,
        we let $P_{\bar{n}}[\e]$ denote the $C[\e]$-module of multilinear \super{}polynomials with coefficients in
        $C[\e]$, in the variables $\{x_i^{(g)}\}_{1\leq i\leq \bar{n}^{(g)},g\in\Z_2^{\oplus\N}}$. We will refer to $\bar{n}$ as
        the associated \emph{multidegree}. We will also write $n=\sum\bar{n}^{(g)}$, the total degree of identities in $P_{\bar{n}}[\e]$.
        The multilinear part of $C[\e]\sg{X^{(g)}}$ is $\bigoplus_{\bar{n}} P_{\bar{n}}[\e]$.
    \end{dfn}

    Again, keeping the analogy to the case of characteristic $0$, we can define the operation of the generalized Grassmann
    hull on an identity.

    \begin{dfn}
    We define the \emph{Grassmann involution} on \super{}polynomials
    as follows.  Let $f=\displaystyle\sum_{\sigma\in S_n}a_\sigma
    x_{\sigma(1)}\cdots x_{\sigma(n)}\in P_{\bar{n}}[\e]$ be a
    multilinear $\Z_2^{\oplus\N}$-graded identity of multidegree
    $\bar{n}$, such that each variable $x_j$ is in the homogenous
    component of $C[\e]\sg{X^{(g)}}$ corresponding to $g_j$. Then
    $$f^*=\displaystyle\sum_{\sigma\in S_n}\esgn{w}{\sigma}a_\sigma x_{\sigma(1)}\cdots x_{\sigma(n)},$$
    where $w=(e_{g_1},\dots,e_{g_n})$.

        (Although $w$ is not well defined, the $\e$-counterpart $\e_{g_1},\dots,\e_{g_n}$ is well defined. Hence, since $\esgn{w}{\sigma}$
        only depends on the $\e_{w_i}$, the morphism $*$ is well
        defined.)
    \end{dfn}

    This is indeed an involution:
    \begin{lem} \label{lem:Grass_hull_is_an_inv}
    The map $f \mapsto f^*$ is an involution.
    \end{lem}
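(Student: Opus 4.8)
The plan is to verify directly that $(f^*)^* = f$, the crux being that every generalized sign squares to $1$. Fix a multilinear graded polynomial $f = \sum_{\sigma \in S_n} a_\sigma\, x_{\sigma(1)} \cdots x_{\sigma(n)} \in P_{\bar{n}}[\e]$, with each variable $x_j$ homogeneous of degree $g_j$, and set $w = (e_{g_1}, \dots, e_{g_n})$ as in the definition of the Grassmann involution. First I would record that $*$ is a well-defined $C[\e]$-linear endomorphism of $P_{\bar{n}}[\e]$: the monomials $x_{\sigma(1)} \cdots x_{\sigma(n)}$, $\sigma \in S_n$, form a $C[\e]$-basis of $P_{\bar{n}}[\e]$; the tuple $w$ depends only on the fixed grades $g_1, \dots, g_n$ and not on $f$ (and through $\esgn{w}{\sigma}$ only on the $\e_{w_i}$, as noted parenthetically in the definition); and $*$ merely rescales the coefficient of the $\sigma$-th basis monomial by the scalar $\esgn{w}{\sigma} \in C[\e]$. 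In particular $f^*$ again lies in $P_{\bar{n}}[\e]$ with the same grading data, so $*$ can legitimately be applied to it a second time.

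Next I would carry out the double application. Writing $f^* = \sum_{\sigma \in S_n} \bigl(\esgn{w}{\sigma} a_\sigma\bigr)\, x_{\sigma(1)} \cdots x_{\sigma(n)}$ in standard form and applying the definition of $*$ once more,
\[
(f^*)^* = \sum_{\sigma \in S_n} \esgn{w}{\sigma}\bigl(\esgn{w}{\sigma} a_\sigma\bigr)\, x_{\sigma(1)} \cdots x_{\sigma(n)} = \sum_{\sigma \in S_n} \esgn{w}{\sigma}^2\, a_\sigma\, x_{\sigma(1)} \cdots x_{\sigma(n)},
\]
where I used commutativity of $C[\e]$ to collect the two copies of $\esgn{w}{\sigma}$. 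It then remains to show $\esgn{w}{\sigma}^2 = 1$ for each $\sigma$. But $\esgn{w}{\sigma}$ is by definition a value of the exponent map, $\esgn{w}{\sigma} = \exp\bigl(\sum_{i<j,\, \sigma(i)>\sigma(j)} \e_{w_{\sigma(i)}} \e_{w_{\sigma(j)}}\bigr)$, and \Rref{rem:exp2} states precisely that $\exp(a)^2 = \exp(2a) = 1$ for every $a$. Hence $\esgn{w}{\sigma}^2 = 1$, so $(f^*)^* = f$; together with the $C[\e]$-linearity noted above, this shows $*$ is an involution.

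I do not expect any real obstacle. The single point that requires care — and it is essentially the only thing to check — is that $*$ is well defined on all of $P_{\bar{n}}[\e]$, so that ``apply $*$ twice'' is meaningful; this is exactly the content of the parenthetical remark accompanying the definition, namely that $\esgn{w}{\sigma}$ is unchanged when the representing words $w_i$ are replaced by other words with the same $\e$-content, hence depends only on the multidegree $\bar{n}$ and the grades $g_i$, all of which are held fixed.
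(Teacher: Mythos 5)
Your proof is correct and follows essentially the same route as the paper: apply $*$ twice, observe that the two factors of $\esgn{w}{\sigma}$ collect into $\esgn{w}{\sigma}^2$, and conclude via \Rref{rem:exp2} that this square equals $1$. The extra care you take about well-definedness of $*$ on $P_{\bar{n}}[\e]$ is a reasonable addition but not a departure from the paper's argument.
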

    \begin{proof}
    Let $f\in P_{\bar{n}}[\e]$ be a multilinear
    $\Z_2^{\oplus\N}$-graded identity of multidegree $\bar{n}$. Write
    $f=\displaystyle\sum_{\sigma\in S_n}a_\sigma x_{\sigma(1)}\cdots
    x_{\sigma(n)}$. Then $f^{**}=\displaystyle\sum_{\sigma\in S_n}
        \esgn{w}{\sigma}^2 a_\sigma x_{\sigma(1)}\cdots x_{\sigma(n)}$. But,
        \begin{eqnarray*}
            \esgn{w}{\sigma}^2 = \exp\left(\displaystyle\sum_{\stackrel{i<j}{\sigma(i)>\sigma(j)}}\e_{w_{\sigma(i)}}\e_{w_{\sigma(j)}}\right)^2 = 1
        \end{eqnarray*}
    by \Rref{rem:exp2}, so $f^{**} = f$.
    \end{proof}

    As is the case with superalgebras, the involution gives the identities of the
    generalized Grassmann hull:
    \begin{dfn}
        Let $\Gamma\triangleleft C[\e]\sg{X^{(g)}}$ be a two-sided ideal. We say that $\Gamma$ is a T$_\Sigma$-ideal if it is also
        invariant under all $C[\e]$-endomorphisms of $C[\e]\sg{X^{(g)}}$ that preserve the grading.

        Also, in this case, we let $\Gamma^*$ be the T$_\Sigma$-ideal generated as a T$_\Sigma$-ideal by the images of all multilinear
        identities in $\Gamma$ under the involution $*$.
    \end{dfn}

    \begin{rem}
        Note that for all T$_\Sigma$-ideals $\Gamma$, we have: $\Gamma^*\cap P_{\bar{n}}[\e]=(\Gamma\cap P_{\bar{n}}[\e])^*$, where
        on the right hand side, taking $*$ means taking $*$ on each element separately. This is because the multilinear part
        $\Gamma\cap P_{\bar{n}}[\e]$ is already endomorphism-invariant, and since, by definition, $\Gamma^*$ is the minimal
        T$_\Sigma$-ideal containing $(\Gamma\cap P_{\bar{n}}[\e])^*$.

        In other words, using $*$ on all multilinear identities of a T$_\Sigma$-ideal $\Gamma$ gives all multilinear identities of
        $\Gamma^*$.
    \end{rem}

    Recall that $\T[\Sigma]{\eA}$ is the set of $\Sigma$-identities of $\eA$, \Dref{def:sigma_id}.

    \begin{thm} \label{thm:idents_of_Grass_hull}
        Let $A$ be a \super{}algebra. Then $\T[\Sigma]{\efreeG[\eA]}$ and $\T[\Sigma]{\eA}^*$ have the same multilinear components.

        In other words, for every $f\in P_{\bar{n}}[\e]$, we have that $f\in\T[\Sigma]{\efreeG[\eA]}$ iff $f^*\in\T[\Sigma]{\eA}$.
    \end{thm}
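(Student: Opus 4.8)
The plan is to reduce the whole statement to a single explicit evaluation in the Grassmann hull, followed by a cancellation. Fix a multilinear $f=\sum_{\sigma\in S_n}a_\sigma x_{\sigma(1)}\cdots x_{\sigma(n)}\in P_{\bar n}[\e]$, where $x_j$ is homogeneous of degree $g_j$ and $a_\sigma\in C[\e]$; write $g=\sum_j g_j$ and $w=(e_{g_1},\dots,e_{g_n})$ (only the residues $\e_{g_j}$ matter, exactly as in the definition of $*$). Since $f$ is multilinear and $\efreeG[\eA]_{g_j}=\efreeG_{g_j}\otimes_{C[\e]}\eA_{g_j}$ is $C[\e]$-spanned by simple tensors, whether $f\in\T[\Sigma]{\efreeG[\eA]}$ is decided by the substitutions $x_j\mapsto\xi_j\otimes b_j$ with $\xi_j\in\efreeG_{g_j}$ and $b_j\in\eA_{g_j}$. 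For such a substitution I would use that multiplication in the hull is $(\xi\otimes b)(\eta\otimes c)=\xi\eta\otimes bc$; that $\efreeG$ is \super{}commutative, so $\xi_{\sigma(1)}\cdots\xi_{\sigma(n)}=\esgn{w}{\sigma}\,\xi_1\cdots\xi_n$ by the same Coxeter-generator induction as in \Pref{prop:sign_properties}.\ref{sign_main_property} (the only input there being the \super{}commutation relation of \Dref{def:ext_super_commutator}); and that each $\esgn{w}{\sigma}a_\sigma\in C[\e]$ passes across $\otimes_{C[\e]}$. This gives the key identity
\begin{equation*}
    f(\xi_1\otimes b_1,\dots,\xi_n\otimes b_n)=(\xi_1\cdots\xi_n)\otimes f^*(b_1,\dots,b_n),
\end{equation*}
which carries essentially all the content of the theorem.

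The ``if'' direction then follows at once: if $f^*\in\T[\Sigma]{\eA}$ then $f^*(b_1,\dots,b_n)=0$ for every homogeneous substitution, so the right-hand side vanishes for all simple-tensor substitutions and $f\in\T[\Sigma]{\efreeG[\eA]}$. For the ``only if'' direction, assume $f\in\T[\Sigma]{\efreeG[\eA]}$, fix homogeneous $b_j\in\eA_{g_j}$, and specialize to $\xi_j=e_{g_j}^{(j)}$ with pairwise distinct upper indices. Then $\xi_1\cdots\xi_n$ is a sorted monomial in which each of these generators occurs exactly once, it lies in $\efreeG_g$, and the displayed identity gives $(\xi_1\cdots\xi_n)\otimes f^*(b_1,\dots,b_n)=0$ in $\efreeG_g\otimes_{C[\e]}\eA_g$. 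I would now invoke the structural fact that such a square-free monomial spans a \emph{free} rank-one $C[\e]$-submodule $L$ of $\efreeG_g$ which is a direct summand, $\efreeG_g=L\oplus M$; then $\efreeG_g\otimes_{C[\e]}\eA_g=(L\otimes_{C[\e]}\eA_g)\oplus(M\otimes_{C[\e]}\eA_g)$, and the map $b\mapsto(\xi_1\cdots\xi_n)\otimes b$ factors as the isomorphism $\eA_g\cong L\otimes_{C[\e]}\eA_g$ followed by the inclusion $L\otimes_{C[\e]}\eA_g\hookrightarrow\efreeG_g\otimes_{C[\e]}\eA_g$, hence is injective. Therefore $f^*(b_1,\dots,b_n)=0$; as the $b_j$ range over all homogeneous elements of the prescribed degrees, $f^*\in\T[\Sigma]{\eA}$. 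Together with the remark preceding the theorem (identifying $\Gamma^*\cap P_{\bar n}[\e]$ with $(\Gamma\cap P_{\bar n}[\e])^*$), this yields the asserted equality of multilinear components.

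The bookkeeping (reduction to simple-tensor substitutions, unwinding grading-preserving $C[\e]$-homomorphisms) and the sign manipulation (just \Pref{prop:sign_properties}) are routine. The one place where genuine content enters, and the step I expect to be the main obstacle, is the structural input used in the ``only if'' direction: that a product of distinct generators of $\efreeG$ is a free direct-summand element of its graded component over $C[\e]$. This is the $\efreeG$-analogue of the normal form for $\G$ --- the only $C[\e]$-torsion among monomials in the generators comes from the relations $(1-\exp(\e_g^2))(e_g^{(m)})^2=0$, which never involve a square-free monomial, so the decomposition of $\efreeG$ as a direct sum of cyclic $C[\e]$-modules over sorted monomials splits off $C[\e]\cdot(\xi_1\cdots\xi_n)$ as a free summand. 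This is precisely where the good behaviour of $\efreeG$ over an arbitrary ring (with $2$ not assumed invertible) is genuinely used; if it has not already been established in the basic theory of $\efreeG$, I would prove it by the same confluence/diamond-lemma argument that underlies the normal form of $\G$.
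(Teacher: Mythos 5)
Your proposal is correct and its core is the same as the paper's: both proofs rest on the single evaluation identity $f(\xi_1\otimes b_1,\dots,\xi_n\otimes b_n)=(\xi_1\cdots\xi_n)\otimes f^*(b_1,\dots,b_n)$, obtained by sorting the $\xi_{\sigma(i)}$ with \Pref{prop:sign_properties} and passing the unit $\esgn{w}{\sigma}$ across $\otimes_{C[\e]}$. The difference is one of completeness rather than of route. The paper's proof consists of exactly this computation (for $\xi_i$ words in the generators) and then stops with ``as we wanted to show''; it leaves implicit both the reduction to simple-tensor substitutions and, more significantly, the ``only if'' direction, where one must choose the $\xi_i$ so that $b\mapsto(\xi_1\cdots\xi_n)\otimes b$ is injective on $\eA_g$. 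You correctly isolate this as the one point of genuine content: taking $\xi_j=e_{g_j}^{(j)}$ with distinct upper indices, the square-free sorted monomial $\xi_1\cdots\xi_n$ spans a free rank-one direct summand of $\efreeG_g$ over $C[\e]$ (the only $C[\e]$-torsion among sorted monomials comes from repeated generators), so the tensor factors through an isomorphism $\eA_g\cong C[\e]\xi_1\cdots\xi_n\otimes_{C[\e]}\eA_g$ followed by a split inclusion. This is exactly the normal-form fact for $\efreeG$ that the paper tacitly assumes, and your diamond-lemma justification of it is the right one. In short: same approach, with a gap in the published argument identified and filled rather than introduced.
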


    \begin{proof}
        Let $f=\displaystyle\sum_{\sigma\in S_n}\alpha_\sigma x_{\sigma(1)}\cdots x_{\sigma(n)}\in P_{\bar{n}}[\e]$. Let $x_i\mapsto a_i\otimes w_i$
        be any substitution where $w_i\in\efreeG_{g_i}$ is a word in the generators $e_j^{(n)}$ of $\efreeG$, in the component corresponding to $g_i$,
        and $a_i\in \eA_{g_i}$. Then, under the substitution:
        \begin{eqnarray*}
            f & \mapsto & \displaystyle\sum_{\sigma\in S_n}\alpha_\sigma a_{\sigma(1)}\otimes w_{\sigma(1)}\cdots a_{\sigma(n)}\otimes w_{\sigma(n)}\\
            & = & \displaystyle\sum_{\sigma\in S_n}\alpha_\sigma (a_{\sigma(1)}\cdots a_{\sigma(n)})\otimes(w_{\sigma(1)}\cdots w_{\sigma(n)})\\
            & \stackrel{\mbox{Prop.~\ref{prop:sign_properties}}}{=} & \displaystyle\sum_{\sigma\in S_n}\alpha_\sigma (a_{\sigma(1)}\cdots a_{\sigma(n)})\otimes(\esgn{w}{\sigma}w_1\cdots w_n)\\
            & = & \displaystyle\sum_{\sigma\in S_n}\alpha_\sigma(\esgn{w}{\sigma}a_{\sigma(1)}\cdots a_{\sigma(n)})\otimes(w_1\cdots w_n)\\
            & = & \(\displaystyle\sum_{\sigma\in S_n}\alpha_\sigma\esgn{w}{\sigma}a_{\sigma(1)}\cdots a_{\sigma(n)}\)\otimes(w_1\cdots w_n)\\
                & = & f^*(a_1,\dots,a_n)\otimes w_1 \cdots w_n,
        \end{eqnarray*}
        as we wanted to show.
    \end{proof}

    \begin{cor}
        Let $\eA$ be a \super{}algebra. Then: $\T[\Sigma]{\efreeG[\efreeG[\eA]]}$ and $\T[\Sigma]{\eA}$ share the same multilinear identities.
    \end{cor}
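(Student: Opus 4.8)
The plan is to derive this purely formally from \Tref{thm:idents_of_Grass_hull} together with the fact that the Grassmann involution $*$ is an involution (\Lref{lem:Grass_hull_is_an_inv}); no new computation is needed. Fix a multidegree $\bar{n}$ and let $f\in P_{\bar{n}}[\e]$. First I would apply \Tref{thm:idents_of_Grass_hull} with the \super{}algebra $\efreeG[\eA]$ playing the role of $\eA$: this yields
\[
    f\in\T[\Sigma]{\efreeG[\efreeG[\eA]]}\iff f^*\in\T[\Sigma]{\efreeG[\eA]}.
\]
Next I would apply \Tref{thm:idents_of_Grass_hull} once more, this time to $\eA$ itself and to the \super{}polynomial $f^*$ (which, by the definition of $*$, again lies in $P_{\bar{n}}[\e]$, since $*$ only rescales each monomial by a scalar $\esgn{w}{\sigma}\in C[\e]$ and does not disturb the multidegree). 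This gives
\[
    f^*\in\T[\Sigma]{\efreeG[\eA]}\iff (f^*)^*\in\T[\Sigma]{\eA}.
\]
Finally, \Lref{lem:Grass_hull_is_an_inv} gives $f^{**}=f$, so composing the two equivalences yields $f\in\T[\Sigma]{\efreeG[\efreeG[\eA]]}$ iff $f\in\T[\Sigma]{\eA}$. Since this holds for every $\bar{n}$, the T$_\Sigma$-ideals $\T[\Sigma]{\efreeG[\efreeG[\eA]]}$ and $\T[\Sigma]{\eA}$ have the same multilinear components, which is the claim.

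The only point that deserves a sentence of justification is that the correspondence of \Tref{thm:idents_of_Grass_hull} is stated on the level of multilinear components of a fixed multidegree, so one must observe that $*$ maps $P_{\bar{n}}[\e]$ to itself; this is immediate from the definition of the Grassmann involution. There is no genuine obstacle here: the statement is the exact \super{}analogue of the classical observation that iterating the Grassmann hull construction returns the original class of (super)algebras up to identities, and the proof is the same two-fold application of the hull/involution correspondence combined with $*^2=\mathrm{id}$.
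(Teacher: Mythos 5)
Your argument is exactly the paper's: apply \Tref{thm:idents_of_Grass_hull} twice (once to $\efreeG[\eA]$ and once to $\eA$) and then invoke \Lref{lem:Grass_hull_is_an_inv} to cancel the two applications of $*$. The proposal is correct, and the extra remark that $*$ preserves the multidegree is a harmless (and accurate) elaboration of what the paper leaves implicit.
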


    \begin{proof}
        Use the result of \Tref{thm:idents_of_Grass_hull} twice, and then apply \Lref{lem:Grass_hull_is_an_inv}.
    \end{proof}

    \begin{rem}
    We have \emph{not} proved that
    $\T[\Sigma]{\efreeG[\eA]}=\T[\Sigma]{\eA}^*$. In characteristic $0$,
    having the same multilinear identities would have implied that
    they are the same. However, this is not the case in positive
    characteristic: $\T[\Sigma]{\eA}$ is not necessarily generated as
    a T$_{\Sigma}$-ideal by its multilinear component.
    \end{rem}

    We see that even though the language of generalized Grassmann
    hulls generalizes the ordinary notion of Grassmann hull, its
    formulation could be considered more elegant; rather than defining
    the involution on a multilinear identity by multiplying by the
    sign of only the odd variables, we simply multiply by the
    generalized sign of all variables. This is mainly because all
    words in the generators $e_i$ of $\G$ are, in a way, generic, so
    there is no ``special treatment" of any specific component of the
    grading.

    \subsection{Generalized Supertraces}

    The superization of basic concepts in linear algebra, such as the
    supertrace and supercommutator, is defined in characteristic zero.
    We now begin the development of a supertheory based upon $\G$ and
    the concept of the generalized superalgebra. Such a \super{}theory will have the advantage of
    being characteristic free, valid over any ring.

    We will begin by defining the notion of \super{}traces. Recall that a trace function on a $C$-algebra $A$ is a function $\tr\co
    A\ra\Cent(A)$ satisfying $\tr[a,b]=0$ and $\tr(a\tr(b))=\tr(a)\tr(b)$.

    \begin{dfn}
    Let $\eA$ be a \super{}algebra over $C$. Its \emph{\super{}center},
    $\esCent{(\eA)}$, is the set of all elements of $\eA$ that
    \super{}commute with every element, i.e.
        \[
            \esCent{(\eA)}=\{a\in \eA\suchthat\forall{b\in \eA},\{a,b\}=0\},
        \]
        where
        $\{a,b\}$ is the \super{}commutator of \Dref{def:ext_super_commutator}.
    \end{dfn}

    \begin{dfn} \label{def:ext_supertrace}
       Let $\eA$ be a \super{}algebra over $C$. A $C[\e]$-linear (grading-preserving) function $\estr:\eA\rightarrow\esCent{(\eA)}$
        will be called a \emph{\super{}trace} iff
        \[
            \estr{\{a,b\}}=0
        \]
        and
        \[
            \estr{(a\,\estr{(b)})}=\estr{(a)}\estr{(b)},
        \]
        for every $a,b\in \eA$.
    \end{dfn}

    The concepts of \emph{\super{}trace \super{}identities} naturally
    follows (see \cite[chapter~12]{comp_aspects}):

    \begin{dfn}
    Define the algebra $C[\e]\sg{X^{(g)},\esTr}$ to be the free
    \super{}algebra with \super{}trace $\esTr$. This algebra is spanned over $C[\e]$ by words of the form $w_0 \esTr(w_1) \cdots \esTr(w_\ell)$ where $w_i \in \sg{X^{(g)}}$, and the grading is such that the grade of $\esTr(w)$ is the same as that of $w$. The defining relations are the axioms of \Dref{def:ext_supertrace}.

    The \super{}trace \super{}identities of a \super{}algebra $\eA$
    with \super{}trace $\estr$ are the elements in the  intersection
    of all the kernels of all grading-preserving $C[\e]$-homomorphisms
    $\phi\co C[\e]\sg{X^{(g)},\esTr}\rightarrow A$    such that
    $\estr{\phi(x)}=\phi(\esTr{\,x})$.
    \end{dfn}

    \begin{rem} \label{rem:formal_trace_notation}
    We use different capitaliztion to differentiate between formal
    traces (traces in the free algebra) and traces of the object under
    discussion. That is, $\Tr$, $\sTr$ and $\esTr$ are formal traces,
    formal supertraces and formal \super{}traces in the algebras
    $C\sg{X,\Tr}$, $C\sg{X^{(0)},X^{(1)},\sTr}$ and
    $C[\e]\sg{X^{(g)},\esTr}$, respectively. At the same time, $\tr$,
    $\str$ and $\estr$ are arbitrary trace functions, in any algebra
    we happen to be currently working with.
    \end{rem}

    For example, the equality $\estr(a^p)=\estr(a)^p$ holds in the
    algebra $A$ for all $a$, iff $A$ satisfies the \super{}trace
    \super{}identity $\esTr(x^p)=\esTr(x)^p$. In other words,
    $\esTr(x^p)=\esTr(x)^p$ is an identity, while
     $\estr(a^p)=\estr(a)^p$ is the value of that identity after substituting the function $\estr$ to the variable $\esTr$.

    We come to our most important example.

    \begin{dfn} \label{def:assoc_ext_supertrace}
    Let $\eA$ be a \super{}algebra with a grading preserving trace
    function $\tr \co \eA \ra C$. Define the associated \super{}trace
    function $\estr=\tr^*$ on $\efreeG[\eA]$ by $\estr(a\otimes
    w)=\tr(a)\otimes w$.

        Conversely, if $\eA$ has a \super{}trace $\estr$, define its associated trace function $\tr=\estr^*$ on
        $\efreeG[\eA]$ by $\tr(a\otimes w)=\estr(a)\otimes w$. Note that $\estr^*$ preserves the grading.
    \end{dfn}

    \begin{lem}
        The above definitions of the associated trace function $\estr^*$ and the associated \super{}trace function
        $\tr^*$, indeed give a trace function and a \super{}trace function, respectively.
    \end{lem}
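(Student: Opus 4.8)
The plan is to isolate a single multiplication identity in $\efreeG[\eA]$ and then read off all the axioms from it, in exact analogy with the classical identity $\{a\otimes w, b\otimes u\} = [a,b]\otimes wu$ recorded above for $A\otimes G$. First I would dispose of the routine reductions: both $\tr^*$ and $\estr^*$ are prescribed on the spanning pure tensors $a\otimes w$ of $\efreeG[\eA]_g = \efreeG_g\otimes_{C[\e]}\eA_g$, with $a\in\eA_g$ and $w\in\efreeG_g$ a word in the generators; since $\tr$ (resp.\ $\estr$) is $C[\e]$-linear, the assignment $a\otimes w\mapsto\tr(a)\otimes w$ (resp.\ $\estr(a)\otimes w$) is $C[\e]$-balanced, hence descends to a well-defined $C[\e]$-linear endomorphism of each component, and it is grading-preserving because $\tr$ (resp.\ $\estr$) is; as the \super{}commutator of \Dref{def:ext_super_commutator} and the two trace axioms are multilinear, it is enough to verify them on homogeneous pure tensors.

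The key step is to establish, for $a\in\eA_g$, $b\in\eA_h$, $w\in\efreeG_g$, $u\in\efreeG_h$, the pair of identities
\[
   [\,a\otimes w,\, b\otimes u\,] = \{a,b\}\otimes(wu), \qquad \{\,a\otimes w,\, b\otimes u\,\} = [a,b]\otimes(wu).
\]
Here multiplication in $\efreeG[\eA]$ is component-wise, $(a\otimes w)(b\otimes u) = (ab)\otimes(wu)$, so $(b\otimes u)(a\otimes w) = (ba)\otimes(uw)$; by \Pref{prop:exponent_property} one has $uw = \exp(\e_u\e_w)\,wu$ in $\efreeG$, and since $\e_u\e_w$ and $\e_g\e_h$ differ by an even element of $\text{span}_\Z\{\e_i\e_j\}$ (the number of occurrences of each $e_i$ in $w$ has the same parity as $g_i$, and likewise for $u$), \Rref{rem:exp2} gives $\exp(\e_u\e_w) = \exp(\e_g\e_h)$. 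The scalar $\exp(\e_g\e_h)\in C[\e]$ passes freely across the tensor, so $(b\otimes u)(a\otimes w) = \exp(\e_g\e_h)(ba)\otimes(wu)$; subtracting yields the first identity, and for the second, $\{a\otimes w, b\otimes u\} = (ab)\otimes(wu) - \exp(\e_g\e_h)^2(ba)\otimes(wu) = (ab-ba)\otimes(wu)$, using $\exp(\e_g\e_h)^2 = \exp(2\e_g\e_h) = 1$ from \Rref{rem:exp2}.

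Granting these, the conclusion is immediate. For $\estr = \tr^*$: the second identity gives $\{\estr(a\otimes w),\,b\otimes u\} = [\tr(a),b]\otimes(wu) = 0$ because $\tr(a)$ is central in $\eA$, so $\estr$ maps into $\esCent(\efreeG[\eA])$; next $\estr\{a\otimes w,b\otimes u\} = \tr[a,b]\otimes(wu) = 0$; and $\estr\big((a\otimes w)\,\estr(b\otimes u)\big) = \tr(a\,\tr(b))\otimes(wu) = \tr(a)\tr(b)\otimes(wu) = \estr(a\otimes w)\,\estr(b\otimes u)$ by the second trace axiom for $\tr$. For $\tr = \estr^*$ the argument is the mirror image, with the two key identities interchanged: the first identity gives $[\tr(a\otimes w),\,b\otimes u] = \{\estr(a),b\}\otimes(wu) = 0$ since $\estr(a)\in\esCent(\eA)$; then $\tr[a\otimes w,b\otimes u] = \estr\{a,b\}\otimes(wu) = 0$; and $\tr\big((a\otimes w)\,\tr(b\otimes u)\big) = \estr(a\,\estr(b))\otimes(wu) = \estr(a)\estr(b)\otimes(wu) = \tr(a\otimes w)\,\tr(b\otimes u)$. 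I do not expect a genuine obstacle: the entire content sits in the two key identities, and the only delicate point is the passage $\exp(\e_u\e_w) = \exp(\e_g\e_h)$ — precisely the reason the exponent must be defined over $\Z_2$ rather than $\Z$ — together with the bookkeeping that every pure tensor written down (such as $[a,b]\otimes(wu)$ or $\tr(a)\otimes w$) genuinely lies in the intended homogeneous component $\efreeG[\eA]_{g+h}$.
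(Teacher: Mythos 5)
Your proposal is correct and follows essentially the same route as the paper: the paper's entire proof consists of the two key identities $\{a\otimes u,b\otimes v\}=[a,b]\otimes uv$ and $[a\otimes u,b\otimes v]=\{a,b\}\otimes uv$, established exactly as you do via $vu=\exp(\e_g\e_h)uv$ and $\exp(\e_g\e_h)^2=1$, with the verification of the trace axioms left implicit. Your version merely spells out the routine reductions and the final derivation that the paper compresses into ``this follows since.''
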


    \begin{proof}
    	This follows since for all $a\otimes u,b\otimes v\in\efreeG[\eA]$, $a,b\in \eA$,
    	$u,v\in\efreeG$,
    	\begin{eqnarray*}
    		\{a\otimes u,b\otimes v\} & = & (a\otimes u)(b\otimes v) - \exp(\e_g\e_h)(b\otimes v)(a\otimes u) \\
    		& = & ab \otimes uv -ba \otimes \exp(\e_g\e_h) vu \\
    		& = & (ab -ba)\otimes uv = [a,b]\otimes uv
    	\end{eqnarray*}
    	and $\{a,b\}\otimes uv=[a\otimes u,b\otimes v]$ in the same manner.
    \end{proof}

    \begin{rem}
        Let $A$ be a $C$-algebra, with a trace function $\tr$. Then $A\otimes_C C[\e][\Z_2^{\oplus\N}]$ has a \super{}algebra
        grading (coming from the grading of the $C[\e][\Z_2^{\oplus\N}]$ component), and is a \super{}algebra.
        Now, the function $\tr\co A\rightarrow\Cent(A)$ can be extended by linearity to $\hat{\tr} = \tr \otimes 1 \co A\otimes_C C[\e][\Z_2^{\oplus\N}]
        \rightarrow\Cent(A)\otimes C[\e][\Z_2^{\oplus\N}]$ such that $\hat{\tr}$ preserves the \super{}algebra grading.
        Then, since $A\otimes_C \efreeG=\efreeG[A\otimes_C C[\e][\Z_2^{\oplus\N}]]$, we obtain a \super{}trace $\estr=\hat{\tr}^*$
        on $A\otimes_C \efreeG$, given by: $\estr(a\otimes w)=\tr(a)\otimes w$. This construction generalizes \Dref{def:assoc_ext_supertrace}
        to the case of (non-graded) $C$-algebras.
    \end{rem}

    Now, in analogue with \Tref{thm:G_and_extended_G_equivalence}, we show the equivalence of supertrace and
    \super{}strace identities (the identities are not graded, so these are not \super{}identities).

    \begin{thm} \label{thm:supertrace_and_extended_supertrace_equivalence}
    	\ThmTraceEquiv.
    \end{thm}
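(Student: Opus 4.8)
The plan is to rerun the proof of \Tref{thm:G_and_extended_G_equivalence}, now carrying the trace along. What makes this essentially free of charge is that \emph{both} trace functions are given by one and the same recipe, $a\otimes w\mapsto\tr(a)\otimes w$ (for $\str$ on $A\otimes_C\freeG$ and for $\estr$ on $A\otimes_C\efreeG$): hence any homomorphism of the form $1_A\otimes\alpha$, with $\alpha$ a homomorphism of the tensorand, automatically intertwines $\str$ and $\estr$, so ``preserving the trace'' costs nothing beyond the algebra statements already established. As the identities in question are ungraded, I regard $(A\otimes_C\freeG,\str)$ and $(A\otimes_C\efreeG,\estr)$ simply as $C$-algebras equipped with a $C$-linear unary operation --- named $\sTr$ on the first and $\esTr$ on the second --- and I must show that a trace polynomial is an identity of the first exactly when it is an identity of the second. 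I prove the two implications in turn.

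First, let $f$ be a $\Sigma$-supertrace identity of $A\otimes_C\efreeG$ and let $x_i\mapsto u_i\in A\otimes_C\freeG$ be a substitution. Take the homomorphism $\phi\co\freeG\to\G$ from the proof of \Tref{thm:G_and_extended_G_equivalence} --- even generators $e_b\mapsto(1-\frac{1}{2}\theta\e_b)e_b$, odd generators $e_a\mapsto\frac{1}{2}\theta\e_a e_a$, on disjoint families of indices --- and compose it with the inclusion $\G\subseteq\efreeG$ of \Exref{exm:efreeG}, obtaining an embedding $\freeG\hookrightarrow\efreeG$ whose image admits a $C$-module complement. Then $1_A\otimes\phi\co A\otimes_C\freeG\to A\otimes_C\efreeG$ is injective (the relevant $\Tor_1^C$ vanishes), and $\estr\circ(1_A\otimes\phi)=(1_A\otimes\phi)\circ\str$ straight from the formula. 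Hence $(1_A\otimes\phi)\bigl(f(u_1,\dots,u_n)\bigr)=f\bigl((1_A\otimes\phi)u_1,\dots\bigr)=0$, and injectivity gives $f(u_1,\dots,u_n)=0$: $f$ is a supertrace identity of $A\otimes_C\freeG$.

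For the converse, let $f$ be a supertrace identity of $A\otimes_C\freeG$ and fix a substitution $x_i\mapsto\hat x_i\in A\otimes_C\efreeG$. Let $X\subseteq\N$ be the finite set of indices $j$ such that $\e_j$, or a generator $e_g^{(m)}$ of $\efreeG$ with $j\in\operatorname{supp}(g)$, occurs in some $\hat x_i$, and let $\efreeG_X\subseteq\efreeG$ be the subalgebra generated by $\theta$, the $\e_j$ with $j\in X$, and the $e_g^{(m)}$ with $\operatorname{supp}(g)\subseteq X$; then each $\hat x_i\in A\otimes_C\efreeG_X$. By \Pref{prop:idempotents} the $\Lambda_s$, $s\co X\ra\set{\pm 1}$, form a complete orthogonal system of idempotents in $C[\e]$, and, because they lie in $C[\e]$ and $\estr$ is $C[\e]$-linear, they split $A\otimes_C\efreeG_X$ as a direct product of the factors $A\otimes_C\Lambda_s\efreeG_X$ compatibly with $\estr$; since a trace polynomial vanishes on a finite product of algebras with trace iff it vanishes on each factor, it suffices to kill each substitution $x_i\mapsto\Lambda_s\hat x_i$. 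But exactly as in \Pref{prop:idempotents}.\ref{item_idemp_behavior}, applied to $\efreeG_X$ in place of $\G_X$, the algebra $\Lambda_s\efreeG_X$ is free supercommutative --- $\Lambda_s e_g^{(m)}$ being even or odd according to the parity of $\#\{\,j\in\operatorname{supp}(g)\suchthat s(j)=-1\,\}$ --- so it embeds into $\freeG$ by a map carrying a $C$-base of $\Lambda_s\efreeG_X$ into part of a $C$-base of $\freeG$; tensoring with $1_A$ yields an injection $A\otimes_C\Lambda_s\efreeG_X\hookrightarrow A\otimes_C\freeG$ intertwining the two operations, on which $f$ vanishes. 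Reassembling over $s$ gives $f(\hat x_1,\dots,\hat x_n)=0$.

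The only point needing attention beyond \Tref{thm:G_and_extended_G_equivalence} is the (routine) analogue of \Pref{prop:idempotents}.\ref{item_idemp_behavior} for $\efreeG_X$ --- the same idempotent computation shows that, after multiplying by $\Lambda_s$, two generators $\Lambda_s e_g^{(m)}$, $\Lambda_s e_h^{(l)}$ commute or anticommute according to their induced parities, with $\theta$ playing throughout the role of a formal square root of $2$ --- together with the bookkeeping that each such piece embeds into $\freeG$ with split cokernel. The genuinely new ingredient, that every homomorphism in sight intertwines $\str$ and $\estr$, should present no obstacle: on homomorphisms $1_A\otimes\alpha$ it is immediate from $\str(a\otimes w)=\tr(a)\otimes w=\estr(a\otimes w)$, and the compatibility of the idempotent decomposition with $\estr$ is just the $C[\e]$-linearity of $\estr$ against the central idempotents $\Lambda_s$.
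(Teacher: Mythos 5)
Your proof is correct and is exactly what the paper intends: its own proof of \Tref{thm:supertrace_and_extended_supertrace_equivalence} consists of the single remark that the argument is ``virtually identical, word for word'' to that of \Tref{thm:G_and_extended_G_equivalence}, and you have carried out precisely that rerun, correctly identifying the only two points requiring comment --- that every map of the form $1_A\otimes\alpha$ automatically intertwines $\str$ and $\estr$, and that the idempotent decomposition of \Pref{prop:idempotents} transfers from $\G_X$ to $\efreeG_X$.
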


    \begin{proof}
    	The proof is virtually identical, word for word, to the proof of
    	\Tref{thm:G_and_extended_G_equivalence}.
    \end{proof}

    A key result in PI-theory is the ``Kemer supertrick'' (see {\it{e.g.}} \cite{Zel}), which heavily relies on representation
    theory, which fails to deliver in characteristic $p$. The Kemer supertrick can be reformulated as the claim that for all algebras
    $A$ there is some $n$ such that $\T{A}\supseteq\T{\M[n](G)}$. In this sense, the Kemer supertrick has already been proven in
    characteristic $p$ (by Kemer, \cite{kemer_basis_of_mult_trace_idents_of_matrices}), but with very bad bounds.

    On the long term, one might hope to bypass this difficulty by directly adding formal supertraces to algebras (and then show that
    their identities imply all identities of $\M[n](\G)$), just like Zubrilin theory (see \cite{specht_for_graded_algebras} for an
    overview of Zubrillin traces) enables the introduction of traces to an algebra and showing that affine PI-algebras satisfy all
    identities of a matrix algebra.

    This motivates the following question about \super{}traces: Let $A$ be an (ordinary) algebra on which a linear function $\ef$ is
    defined. What identities on $A$ and $\ef$ allow us to introduce a grading to the algebra such that $\ef$ becomes a \super{}trace?

    More formally, we define

    \begin{dfn}
        Let $C\sg{X,\eF}$ be the free algebra over $C$ with a $C$-linear function $\eF$ acting freely on it. Let $A$ be any
        $C$-algebra with a linear function $\ef:A\rightarrow A$. We define the \emph{identities of $A$ with linear function $\ef$}
        to be the intersection of a;; kernels of all homomorphisms $\phi:C\sg{X,\eF}\rightarrow A$ such that
        $\phi(\eF{(a)})=\ef{(\phi(a))}$.
    \end{dfn}

    \begin{rem}
        As in \Rref{rem:formal_trace_notation}, we use capitalization to differentiate formal objects from
        others. That is, $\ef$ is any particular linear function, while $\eF$ is \emph{the} formal linear function, of the algebra
        $C\sg{X,\eF}$.
    \end{rem}

    \begin{thm} \label{thm:basis_of_identities_of_free_str}
        The multilinear part of the ideal of identities of $C[\e]\sg{X^{(g)},\esTr}$ with linear function $\esTr$ is generated by:
        \begin{eqnarray*}
            \eF(\eF(x)y)& = & \eF(x)\eF(y)\\
            \eF(x\eF(y))& = & \eF(x)\eF(y)\\
            {}[x,\eF{[y,z]}] & = & 0\\
            {}[\eF(x),[\eF(y),z]] & = & 0
        \end{eqnarray*}
    \end{thm}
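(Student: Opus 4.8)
The plan is to establish the two inclusions: that the four displayed polynomials are identities of $\mathfrak{R}:=C[\e]\sg{X^{(g)},\esTr}$, viewed as a $C$-algebra with the linear map $\esTr$, and conversely that every multilinear identity of $\mathfrak{R}$ lies in the ideal $I\normali C\sg{X,\eF}$ that they generate (an ideal closed under $\eF$ and under substitution of elements of $C\sg{X,\eF}$ for the variables).

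For the first inclusion I would, by multilinearity, evaluate the variables at homogeneous elements $x\in\mathfrak{R}_g$, $y\in\mathfrak{R}_h$, $z\in\mathfrak{R}_k$. The two mixed relations then follow at once from $\esTr(a\,\esTr(b))=\esTr(a)\esTr(b)$ and $\esTr\{a,b\}=0$ (so that $\esTr(ab)=\exp(\e_{|a|}\e_{|b|})\esTr(ba)$), together with $\esTr(b)\in\mathfrak{R}_{|b|}$. For the two bracket relations I would use $\esTr[y,z]=(\exp(\e_h\e_k)-1)\esTr(zy)$ and $[\esTr(y),z]=(\exp(\e_h\e_k)-1)\,z\,\esTr(y)$, and the fact that $\esTr(zy)$, respectively $z\,\esTr(y)$, lies in the super-center $\esCent(\mathfrak{R})$; in both cases the left-hand side collapses to the single $C[\e]$-scalar $(\exp(\e_h\e_k)-1)(\exp(\e_g\e_h+\e_g\e_k)-1)$ times an element of $\mathfrak{R}$, and that scalar vanishes by a computation of exactly the type carried out in \Lref{lem:grass_ident_for_generators}, using only $\e_i^2=\theta\e_i$ and $\theta^2=2$.

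For the converse I would first use the two mixed identities to reduce, modulo $I$, an arbitrary multilinear trace monomial to a $C$-combination of monomials in which no $\esTr$ stands at the very beginning or the very end of the argument of another $\esTr$; I would then use the two bracket identities together with their consequences to bring these into an explicit list of normal monomials spanning the multilinear part of $C\sg{X,\eF}/I$ in any fixed set of variables. It then remains to show that this list is linearly independent \emph{as a set of functions on $\mathfrak{R}$}: substituting each $x_i$ by a single generator $e_{g_i}^{(1)}$ of $\efreeG$ carrying a distinct grade $g_i$, the trace-free parts become pairwise distinct basis words of the free \super{}algebra $C[\e]\sg{X^{(g)}}$ and the $\esTr$-factors become the corresponding $C[\e]$-independent elements of $\esCent(\mathfrak{R})$, so by the trace-monomial description of $\mathfrak{R}$ a vanishing $C$-combination of normal monomials forces all its coefficients to vanish. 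Since the normal monomials span $C\sg{X,\eF}/I$ in each multidegree and stay independent after this substitution, the natural surjection from $C\sg{X,\eF}/I$ onto the quotient by the full identity ideal is an isomorphism in each multidegree, which gives the theorem --- the argument paralleling the proof of \Tref{thm:comodule_of_G}.

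The step I expect to be the main obstacle is this normalisation: producing the correct normal form modulo $I$ and verifying that the four identities --- and no further ``hidden'' ungraded identity --- suffice for it. The difficulty is that $\esTr$ here is only a linear map; we do not have $\esTr[a,b]=0$, only that $\esTr[a,b]$ and $[\esTr(a),b]$ are central in the appropriate sense, so the reductions one would use for a genuine trace algebra are unavailable and have to be replaced throughout by bookkeeping with the generalized signs. Once the normal form is pinned down, matching it against the trace-monomial structure of $\mathfrak{R}$ is the decisive point, and it is here that the machinery of the earlier sections --- \Pref{prop:idempotents}, \Pref{prop:sign_properties} and \Tref{thm:comodule_of_G} --- carries the argument.
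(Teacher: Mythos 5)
Your first half is fine and coincides with the paper's \Lref{lem:free_str_satisfies_grassmann}: the two trace axioms give the first pair of identities, and the two bracket identities collapse to the vanishing of the scalar $(1-\exp(\e_h\e_k))(1-\exp(\e_g(\e_h+\e_k)))$ in $C[\e]$, which is exactly the Grassmann-identity computation already carried out for $\G$. (Minor slip: $z\,\esTr(y)$ is not in $\esCent$; what your computation actually uses is that $\esTr(x)$ and $\esTr(y)$ are, which suffices.)

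The completeness half has a genuine gap, and it sits precisely where you wave it through rather than where you say you expect trouble. After reducing to a normal form you propose a \emph{single} substitution $x_i\mapsto x_i^{(g_i)}$ with distinct grades into the free \super{}trace algebra itself, and claim the resulting elements are $C$-independent ``by the trace-monomial description'' of that algebra. Two problems. First, the paper only asserts that the trace words $w_0\esTr(w_1)\cdots\esTr(w_\ell)$ \emph{span} $C[\e]\sg{X^{(g)},\esTr}$ over $C[\e]$; no $C[\e]$-basis is established anywhere, and without one you cannot read off linear independence inside the free object --- proving such freeness is a nontrivial rewriting/diamond-lemma argument of roughly the same order as the theorem itself. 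Second, and more seriously, distinct normal-form terms routinely land on the \emph{same} trace word after your substitution, differing only by a coefficient in $C[\e]$: for instance $x_1x_2\eF(x_3)\eF(x_4)$, $x_1x_2[\eF(x_3),\eF(x_4)]$, $x_1\eF(x_4)[x_2,\eF(x_3)]$, $\eF(x_4)[x_1x_2,\eF(x_3)]$ and $[x_1,\eF(x_3)][x_2,\eF(x_4)]$ all become $C[\e]$-multiples of $x_1^{(g_1)}x_2^{(g_2)}\esTr(x_3^{(g_3)})\esTr(x_4^{(g_4)})$, with coefficients $1$, $\e_3\e_4$, $\e_2\e_3$, and so on. What you need is that, for each fixed trace word, all of these coefficients are $C$-linearly independent in $C[\e]$ --- a statement in the spirit of \Tref{thm:codim_of_G} but considerably more intricate, for which you give no argument. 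This is exactly the content the paper supplies by other means: it evaluates in $\M[n](\G)$ with the \super{}trace induced from the ordinary matrix trace, and isolates the coefficients one at a time through a sequence of tailored substitutions (matrix units arranged into a path plus loops, with central elements or generators $e_i$ of $\G$ placed on chosen edges), organized by nested inductions on $n+k+\l$, on $k+\l$, on $\l$, and on the position of $s_i$ within $t_i$. Your single generic substitution may well separate everything --- it does in small cases --- but establishing that is the real work of the theorem, not a consequence of the definitions, so as written the proposal is a plan rather than a proof.
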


    Note that the \super{}identity $\eF{\{a,b\}}=0$ of \Dref{def:ext_supertrace} is not in the list, as it is not an (ordinary) identity.

    To prove the theorem we require a few lemmas. We begin by proving a lemma analogous to \Lref{lem:G_satisfies_grass}:

    \begin{lem} \label{lem:free_str_satisfies_grassmann}
        The identities with linear function:
        \begin{subequations}
            \begin{eqnarray}
                \eF(\eF(x)y)& = & \eF(x)\eF(y)  \label{eq:str_swallow_right}\\
                \eF(x\eF(y))& = & \eF(x)\eF(y)  \label{eq:str_swallow_left}\\
                {}[x,\eF{[y,z]}] & = & 0    \label{eq:str_of_commutator_is_central}\\
                {}[\eF(x),[\eF(y),z]] & = & 0    \label{eq:str_commutes_with_commutator_with_str}
            \end{eqnarray}
        \end{subequations}
        hold in $C[\e]\sg{X^{(g)},\esTr}$.
    \end{lem}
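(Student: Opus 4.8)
The plan is to check the four identities \eqref{eq:str_swallow_right}--\eqref{eq:str_commutes_with_commutator_with_str} in $\eA:=C[\e]\sg{X^{(g)},\esTr}$ one at a time. Each is $C[\e]$-multilinear in its variables, so it suffices to substitute homogeneous elements; we may then freely use the defining relations of $\eA$, i.e.\ the \super{}trace axioms of \Dref{def:ext_supertrace} --- in particular every value $\esTr(u)$ is \super{}central and homogeneous of the same degree as $u$ --- together with $\exp(a)\exp(b)=\exp(a+b)$ and $\exp(a)^2=1$ (\Rref{rem:exp2}).

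Identity \eqref{eq:str_swallow_left} is one of the defining relations, so there is nothing to prove. For \eqref{eq:str_swallow_right}, take $x,y$ homogeneous of degrees $g,h$; using that $\esTr(x)$ is \super{}central we rewrite $\esTr(x)y=\exp(\e_g\e_h)\,y\,\esTr(x)$, then apply $C[\e]$-linearity of $\esTr$ and \eqref{eq:str_swallow_left} with its arguments interchanged to get $\esTr(\esTr(x)y)=\exp(\e_g\e_h)\,\esTr(y)\,\esTr(x)$; finally $\esTr(x)\esTr(y)=\exp(\e_g\e_h)\,\esTr(y)\,\esTr(x)$ because the \super{}central elements $\esTr(x),\esTr(y)$ \super{}commute, which proves \eqref{eq:str_swallow_right}.

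For \eqref{eq:str_of_commutator_is_central} and \eqref{eq:str_commutes_with_commutator_with_str} I would first ``peel off'' the inner bracket. Writing $|x|=g$, $|y|=h$, $|z|=f$: from $\esTr\{y,z\}=0$ we obtain $\esTr[y,z]=(\exp(\e_g\e_h)-1)\,\esTr(zy)$, where $c:=\esTr(zy)$ is \super{}central of degree $g+h$; commuting the degree-$f$ element $x$ past $c$ via $\{x,c\}=0$ then gives
\[
[x,\esTr[y,z]]=(\exp(\e_g\e_h)-1)\bigl(\exp(\e_f\e_{g+h})-1\bigr)\,\esTr(zy)\,x .
\]
Similarly, $\esTr(y)$ being \super{}central yields $[\esTr(y),z]=(\exp(\e_h\e_f)-1)\,z\,\esTr(y)$, and commuting $\esTr(x)$ past this element gives
\[
[\esTr(x),[\esTr(y),z]]=(\exp(\e_h\e_f)-1)\bigl(\exp(\e_g\e_{h+f})-1\bigr)\,z\,\esTr(y)\,\esTr(x).
\]
So both identities follow from one scalar identity in $C[\e]$: for all $a,b,c\in\Z_2^{\oplus\N}$,
\[
(\exp(\e_a\e_b)-1)\bigl(\exp(\e_c\e_{a+b})-1\bigr)=0 .
\]

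This scalar identity is the crux, and I expect it to be the main obstacle --- a blunt expansion of the products $\prod(1-\e_i\e_j)$ is unwieldy. To make the cancellation visible: the identity involves only finitely many $\e_i$, and since $\Z[\e]$ is a free $\Z$-module and $C[\e]=C\otimes_\Z\Z[\e]$, it suffices to prove it over $C=\Q$, where $\Z[\e]\hookrightarrow\Q[\e]$. Over $\Q$ the elements $p_i=\frac12\theta\e_i$ are commuting idempotents with $\e_i=\theta p_i$, so the subring they generate embeds into the ring of $\Q$-valued functions on $\{0,1\}^{\N}$ via $p_i\mapsto x_i$. Under such an evaluation $\e_i\e_j=2p_ip_j\mapsto 2x_ix_j$, hence $\exp(\e_u\e_v)=\prod_{i\in S_u,\,j\in S_v}(1-\e_i\e_j)\mapsto(-1)^{a_u(x)a_v(x)}$, where $S_u$ is the support of $u$ and $a_u(x)=\sum_{i\in S_u}x_i$ counts its active coordinates. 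Thus the first factor is nonzero (equal to $-2$) only when $a_a(x)$ and $a_b(x)$ are both odd, while the second is nonzero only when $a_c(x)$ is odd and exactly one of $a_a(x),a_b(x)$ is odd; these conditions cannot hold simultaneously, so the product vanishes at every point and is therefore $0$ in $C[\e]$. This reduction and the resulting parity observation are the core of the argument.
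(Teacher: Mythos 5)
Your proof is correct, and it diverges from the paper's at the crucial step. Both arguments peel the identities \eqref{eq:str_of_commutator_is_central} and \eqref{eq:str_commutes_with_commutator_with_str} down to the single scalar identity $(1-\exp(\e_a\e_b))(1-\exp(\e_c\e_{a+b}))=0$ in $C[\e]$ by repeatedly using that $\esTr$-values are \super{}central and that a product inside $\esTr$ can be cyclically rotated up to $\exp$-factors (your derivation of \eqref{eq:str_swallow_right} from \eqref{eq:str_swallow_left} via \super{}centrality is also fine; the paper simply declares both to be part of the definition). The difference is in how the scalar identity is established. The paper imports it from \Lref{lem:G_satisfies_grass}: choosing words $w_x,w_y,w_z\in\G$ with the prescribed $\e$-degrees, the already-proved relation $[w_x,[w_y,w_z]]=0$ expands to $(1-\exp(\e_y\e_z))(1-\exp(\e_x(\e_y+\e_z)))\,w_xw_yw_z=0$, and the scalar factor is read off. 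You instead prove the scalar identity from scratch: reduce to $C=\Q$ using that $\Z[\e]$ is a free $\Z$-module (true, with basis $\theta^{\delta}\prod_{i\in S}\e_i$, $\delta\in\{0,1\}$) and that $C[\e]\cong C\otimes_\Z\Z[\e]$, then split $\Q[\e]$ by the commuting idempotents $\frac{1}{2}\theta\e_i$ --- essentially the idempotents $\Lambda_s$ of \Pref{prop:idempotents} --- so that $\exp(\e_u\e_v)$ becomes $(-1)^{a_u(x)a_v(x)}$ at each point of $\{0,1\}^n$, and finish with the parity observation that ``both of $a_a,a_b$ odd'' and ``exactly one of $a_a,a_b$ odd'' cannot hold simultaneously. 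Your route is self-contained and makes the combinatorial reason for the vanishing transparent, at the cost of the base-change step; the paper's route is shorter because it reuses the work of Section~2, and it stays over an arbitrary $C$ without invoking torsion-freeness. Two small blemishes in your write-up, neither fatal: the degree labels are scrambled (you declare $|x|=g$, $|y|=h$, $|z|=f$ but the displayed formulas are written as if $|y|=g$, $|z|=h$, $|x|=f$), and the point evaluations are $\Q[\theta]/(\theta^2-2)$-valued rather than $\Q$-valued, though the factors you evaluate land in $\Z$ anyway.
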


    \begin{proof}
        The identities \Eqref{eq:str_swallow_right} and \Eqref{eq:str_swallow_left} follow immediately from the definition
        of the \super{}trace (\Dref{def:ext_supertrace}).

        We will now show that the identities \Eqref{eq:str_of_commutator_is_central} and \Eqref{eq:str_commutes_with_commutator_with_str}
        are indeed satisfied by any \super{}trace, using the fact that the \super{}traces \super{}commute with everything
        and a product of two elements inside a \super{}trace behaves as if it \super{}commutes. Thus, for the purpose of checking
        \Eqref{eq:str_of_commutator_is_central} and \Eqref{eq:str_commutes_with_commutator_with_str}, one can assume that everything
        \super{}commutes. But the \super{}commutative \super{}algebra $\G$ satisfies the Grassmann identity, which thus implies
        these two identities.

        More formally, we begin by proving \Eqref{eq:str_of_commutator_is_central}. The proof of \Eqref{eq:str_commutes_with_commutator_with_str}
        is completely analogous. First of all, since \Eqref{eq:str_of_commutator_is_central} is multilinear, we may assume that
        $x$, $y$ and $z$ are all homogenous. Then the following holds:
        \begin{eqnarray*}
            [x,\esTr{[y,z]}] & = & [x,\esTr(yz)-\esTr(zy)]=[x,\esTr(yz)-\exp(\e_y\e_z)\esTr(yz)]\\
            & = & (1-\exp(\e_y\e_z))[x,\esTr(yz)]\\
            & = & (1-\exp(\e_y\e_z))(x\,\esTr(yz)-\esTr(yz)x)\\
            & = & (1-\exp(\e_y\e_z))(x\,\esTr(yz)-\exp(\e_x\e_{yz})x\,\esTr(yz))\\
            & = & (1-\exp(\e_y\e_z))(1-\exp(\e_x(\e_y+\e_z)))x\,\esTr(yz).
        \end{eqnarray*}
        Hence, in order to show that this is zero, it is sufficient to show that $0=(1-\exp(\e_y\e_z))(1-\exp(\e_x(\e_y+\e_z)))$.

        However, if we choose words $w_x,w_y,w_z\in\G$ such that $\e_{w_x}=\e_x$, $\e_{w_y}=\e_y$ and $\e_{w_z}=\e_z$,
        then since $\G$ satisfies the Grassmann identity:
        \begin{eqnarray*}
            0=[w_x,[w_y,w_z]] & = & [w_x,w_y w_z-w_z w_y]=[w_x,w_y w_z-\exp(\e_y\e_z)w_y w_z]\\
            & = & (1-\exp(\e_y\e_z))[w_x,w_y w_z]\\
            & = & (1-\exp(\e_y\e_z))(w_x w_y w_z-w_y w_z w_x)\\
            & = & (1-\exp(\e_y\e_z))(w_x w_y w_z-\exp(\e_x\e_{yz})w_x w_y w_z)\\
            & = & (1-\exp(\e_y\e_z))(1-\exp(\e_x(\e_y+\e_z)))w_x w_y w_z,
        \end{eqnarray*}
        and thus $(1-\exp(\e_y\e_z))(1-\exp(\e_x(\e_y+\e_z)))=0$, as we wanted to show.
    \end{proof}

    Now, we give the lemma analogous to \Lref{lem:G_satisfies_grass_consequ}:

    \begin{lem} \label{lem:free_str_satisfies_grassmann_consequ}
        The identities with linear function:
        \begin{subequations}
            \begin{eqnarray}
                [x,[\eF(y),\eF(z)]] & = & 0 \label{eq:commutator_of_str_is_central}\\
                {}[x,\eF(y)][\eF(z),\eF(w)]+[x,\eF(z)][\eF(y),\eF(w)] & = & 0 \label{eq:grass_consequ_str_moves}\\
                {}[\eF(x),y][\eF(z),\eF(w)]+[\eF(x),\eF(z)][y,\eF(w)] & = & 0 \label{eq:grass_consequ_var_moves}
            \end{eqnarray}
        \end{subequations}
        are consequences of \Eqref{eq:str_swallow_right}, \Eqref{eq:str_swallow_left}, \Eqref{eq:str_of_commutator_is_central}
        and \Eqref{eq:str_commutes_with_commutator_with_str}.
    \end{lem}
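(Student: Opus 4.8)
The plan is to establish the three identities in the order listed, each from the four hypotheses of \Lref{lem:free_str_satisfies_grassmann} together with the Leibniz rules $[ab,c]=a[b,c]+[a,c]b$ and $[a,bc]=[a,b]c+b[a,c]$; the already-derived ones are then available for the later ones. First I would treat \eqref{eq:commutator_of_str_is_central}: the swallowing identities \eqref{eq:str_swallow_right} and \eqref{eq:str_swallow_left} let one rewrite
\[
  [\eF(y),\eF(z)]=\eF(y)\eF(z)-\eF(z)\eF(y)=\eF(\eF(y)z)-\eF(z\eF(y))=\eF\bigl([\eF(y),z]\bigr),
\]
exhibiting $[\eF(y),\eF(z)]$ as the value of $\eF$ on a commutator, which is central by \eqref{eq:str_of_commutator_is_central}. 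Hence $[x,[\eF(y),\eF(z)]]=0$. I expect this step to be entirely routine.

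For \eqref{eq:grass_consequ_str_moves} the idea is to recognise the left-hand side as a single iterated commutator. Put $p=\eF(y)$, $q=\eF(z)$, $r=\eF(w)$. Expanding $[x,pq]=[x,p]q+p[x,q]$ and applying the Leibniz rules gives
\[
  [[x,pq],r]=[x,p][q,r]+[[x,p],r]\,q+p\,[[x,q],r]+[p,r][x,q].
\]
Now $[x,p]=-[\eF(y),x]$ and $[x,q]=-[\eF(z),x]$ have the shape $[\eF(\cdot),\cdot]$, so \eqref{eq:str_commutes_with_commutator_with_str} makes each commute with $r=\eF(w)$, killing the two middle terms; and $[p,r]=[\eF(y),\eF(w)]$ is central by \eqref{eq:commutator_of_str_is_central}, so $[p,r][x,q]=[x,q][p,r]$. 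Thus $[[x,pq],r]$ is exactly the left-hand side of \eqref{eq:grass_consequ_str_moves}, and it remains to see it vanishes. But \eqref{eq:str_swallow_right} gives $pq=\eF(y)\eF(z)=\eF(\eF(y)z)$, so $[x,pq]=-[\eF(\eF(y)z),x]$ again has the shape $[\eF(\cdot),\cdot]$, whence \eqref{eq:str_commutes_with_commutator_with_str} yields $[r,[x,pq]]=0$.

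The derivation of \eqref{eq:grass_consequ_var_moves} is parallel, now with $p=\eF(x)$, $q=\eF(z)$, $r=\eF(w)$ and the free variable $y$ in the middle slot. Expanding $[[p,yq],r]$ with $yq=y\eF(z)$ produces $[p,y][q,r]+[[p,y],r]\,q+y\,[[p,q],r]+[y,r][p,q]$, where $[[p,y],r]=0$ by \eqref{eq:str_commutes_with_commutator_with_str}, $[[p,q],r]=0$ since $[p,q]=[\eF(x),\eF(z)]$ is central by \eqref{eq:commutator_of_str_is_central}, and $[y,r][p,q]=[p,q][y,r]$ for the same reason; so $[[p,yq],r]$ is the left-hand side of \eqref{eq:grass_consequ_var_moves}. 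Since $[p,yq]=[\eF(x),y\eF(z)]$ is directly of the form $[\eF(\cdot),\cdot]$, \eqref{eq:str_commutes_with_commutator_with_str} gives $[[p,yq],r]=0$, and we are done.

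The only step that is not purely mechanical is spotting that, after one swallowing move, the inner bracket $[x,\eF(y)\eF(z)]$ (respectively $[\eF(x),y\eF(z)]$, which needs no move) can be read as $[\eF(\text{word}),\text{word}]$ --- precisely the shape to which \eqref{eq:str_commutes_with_commutator_with_str} applies. Everything else is bookkeeping with the Leibniz rule and the centrality supplied by \eqref{eq:commutator_of_str_is_central}, so I expect the difficulty to lie in presenting this cleanly rather than in any real obstruction.
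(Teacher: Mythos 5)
Your proposal is correct and follows essentially the same route as the paper: the first identity is obtained exactly as in the paper (swallowing $\eF(y)\eF(z)-\eF(z)\eF(y)$ into $\eF([\eF(y),z])$ and invoking \eqref{eq:str_of_commutator_is_central}), and for the other two the paper simply declares the argument ``completely analogous'' to the classical derivation of $[x,u][v,z]+[x,v][u,z]=0$ from the Grassmann identity, which is precisely the expansion of $[[x,pq],r]$ (resp.\ $[[p,yq],r]$) that you carry out, with the correct bookkeeping of which terms are killed by \eqref{eq:str_commutes_with_commutator_with_str} and which by the centrality from \eqref{eq:commutator_of_str_is_central}. You have merely written out the details the paper leaves implicit.
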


    \begin{proof}
    In order to obtain \Eqref{eq:commutator_of_str_is_central}, substitute $y\mapsto\eF(y)$ into \Eqref{eq:str_of_commutator_is_central},
        and use \Eqref{eq:str_swallow_right} and \Eqref{eq:str_swallow_left} to see that $0=[x,\eF{[y,z]}]\mapsto
        [x,\eF{[\eF(y),z]}]=[x,[\eF(y),\eF(z)]]$.
        The proofs of \Eqref{eq:grass_consequ_str_moves} and \Eqref{eq:grass_consequ_var_moves}, given
        \Eqref{eq:str_commutes_with_commutator_with_str} and \Eqref{eq:commutator_of_str_is_central} are completely analogous to
        the proof of \Lref{lem:G_satisfies_grass_consequ}.
    \end{proof}

    \begin{proof}[of \Tref{thm:basis_of_identities_of_free_str}]
        We will use the following equalitites: for all words $s=x_1\cdots x_n$ and $t$, we have
        \begin{subequations}
            \begin{eqnarray}
                {}[s,t] & = & [x_1,x_2\cdots x_n t]+[x_2,x_3\cdots x_n t x_1]+\dots+[x_n,t x_1 x_2\cdots x_{n-1}], \label{eq:asymm_cyclic_trace_equality}\\
                {}0 & = & [x_1,x_2\cdots x_n]+[x_2,x_3\cdots x_n x_1]+\dots+[x_n,x_1 x_2\cdots x_{n-1}]. \label{eq:cyclic_trace_equality}
            \end{eqnarray}
        \end{subequations}

        The strategy of our proof greatly resembles that of \Lref{lem:all_idents_of_G_are_consequences_of_grassmann}. We
        will use the above identities to bring an arbitrary polynomial $f\in C\sg{X,\eF}$ to a specified standard
        form, and then use substitutions to show that the coefficients are $0$. This will be done via substitutions
        from the matrix algebras $\M[n](\G)$ over $\G$, with the \super{}traces $\estr$ associated with the usual
        traces in $\M[n](C)$.

        We begin by specifying the standard form we will use. Note that we are working with multilinear polynomials.
        The form is a sum of terms of the form:
    		\[\begin{array}{c}
            w\times\\
            \eF(v_1)\eF(v_2)\cdots\eF(v_n)\times\\
            {}[w_1,\eF(u_1)][w_2,\eF(u_2)]\cdots[w_m,\eF(u_m)]\times\\
            {}[\eF(u_{m+1}),\eF(u_{m+2})][\eF(u_{m+3}),\eF(u_{m+4})]\cdots[\eF(u_{k-1}),\eF(u_k)]\times\\
            \eF{[s_1,t_1]}\eF{[s_2,t_2]}\cdots\eF{[s_\l,t_\l]}
        \end{array}\]
        where $w$, $w_1,\dots,w_m$, $v_1,\dots,v_n$, $u_1,\dots,u_k$ and $t_1,\dots,t_\l$ are
        all words in the $x_i$, and the $s_1,\dots,s_\l$ are letters. However, many of these forms are trivially
        equal, so we require that: the words $u_1,\dots,u_k$ are alphabetically ordered; the words
        $v_1,\dots,v_n$ are alphabetically ordered; the pairs $(s_i,t_i)$ are also alphabetically ordered;
    		for every $i$, the letter $s_i$ is smaller than some letter of $t_i$; and the words $v_i$ and $u_i$ are cyclically
    		minimal, where a word is cyclically minimal if it is the first among its cyclic rotations.

        \Lref{lem:free_str_satisfies_grassmann}, \Lref{lem:free_str_satisfies_grassmann_consequ}
        and \Eqref{eq:asymm_cyclic_trace_equality}, \Eqref{eq:cyclic_trace_equality} imply that all multilinear polynomials
        can be brought to this form.

        Now, we will show that the coefficients of the terms containing no $\eF$-s are zero. Indeed, substitute
        matrix units $x_i\mapsto e_{\sigma(i),\sigma(i+1)}$ into all $x_i$, where $\sigma$ is some permutation.
        Then only the monomial in which the $x_i$ are ordered according to $\sigma$ contributes, and thus its
        coefficient is $0$.

        Next, rather than substitute a path as we just did, we choose some subset of the variables
        and substitute a cycle into them and a path into the rest. Since the standard trace is zero off diagonally, the only terms contributing are those
        that have no more than one appearance of $\eF$, corresponding to the cycle. We thus have three options for the terms that contribute:
        $w\cdot\eF(v_1)$, $w\cdot[w_1,\eF(u_1)]$ and $w\cdot\eF{[s_1,t_1]}$.

        Note that the last two do not contribute at all if the coefficients of the matrix units are central. Thus
        the coefficient of the first is $0$. Now, substitute coefficients from $\G$
        to two edges of the loop, such that exactly one edge has $e_1$ as the coefficient, and another
        has $e_2$ as the coefficient. Then only the term $w\cdot\eF{[s_1,t_1]}$ contributes -- and hence has coefficient equal to $0$.
        Finally, substitute $e_1$ to just one of the variables of the loop, and $e_2$ to an edge of the path.
        Then the term $w\cdot[w_1,\eF(u_1)]$ gives a non-zero contribution, unless it too has coefficient zero.

        We use induction on $N=n+k+\l$ to show that all coefficients are $0$. We substitute matrix elements such that there is one path, and $N=n+k+\l$ loops.
        We are now left with the liberty to choose their coefficients from $\G$. Now, we must be able to tell
        how they are divided into $v_i$-s, $u_i$-s and $(s_i,t_i)$-s. So, at first we substitute only central coefficients.
        This gives us the case of: $k=\l=0$, so its coefficient is zero.

        Now, we will use induction on $k+\l$. We choose $n=N-(k+\l)$ loops, and substitute central coefficients.
        This forces them to be $v_1,\dots,v_n$, and by induction, no coefficient with any other \mbox{$v_i$-s} contributes.
        Now, we substitute coefficients $e_i$ into all elements of the path, and we substitute one coefficient
        into the generators in each remaining loop (out of the $k+\l$ loops left). This gives us the case where
        $\l=0$.

        We use induction on $\l$. Choose $k$ loops, and substitute one coefficient into each one of them, in addition
        to the substitution into elements of the path. This forces these loops to be the $u_1,\dots,u_k$. We are left
        with two things to find out: how is the path split into the $w,w_1,\dots,w_m$, and how are the remaining
        $\l$ loops divided between the $s_i$ and the $t_i$.

        Choosing the division of each remaining loop into $s_i$ and $t_i$ is easy, and will be done via induction
        on the position of the letter $s_i$ relative to the largest letter of $t_i$. Indeed, the base of the
        induction is this: substitute a coefficient $e_{i_1}$ to the largest letter and  $e_{i_2}$ to the letter
        before it. Then the only contribution to the coefficient of the product $\e_{i_1}\e_{i_2}$ comes from the cases
        in which the largest letter itself is $s_i$, or the one before it is $s_i$ (otherwise $e_{i_1}$ and $e_{i_2}$ appear in
        their correct order). But because the largest letter is never $s_i$, we see that $s_i$ is also never
        the letter before that. Proceeding by induction, we are done.

        Therefore, we have almost isolated all coefficients of the form; we must now isolate one specific way to
        break down the path to $w,w_1,\dots,w_m$, for an arbitrary (but known) choice of $s_i$. This is done as
        follows. We use induction on $m$. Now, we already know that the associated loop, $u_i$, has one
        coefficient, say $e_1$, and we know which loop it is. Also recall that we substituted
        coefficients into the elements of the path. So, after the substitution, look for the largest
        number of $\e_i$-s appearing. This information determines which elements of the path belong to $w$ (their $\e_i$-s
        never appear). Now look for the smallest number of $\e_i$-s from the path appearing. This is the case
        where each $w_i$ contributes one $\e_i$. So, sort these $\e_i$-s, and put the element of the path
        corresponding to the $j$-th $\e_i$ into $w_j$. This gives us all elements of $w_j$, and only the case
        where $m$ is the smallest value we have not considered, contributes.

        This isolates everything -- only one term contributes, and thus has a coefficient of zero, which completes
        all of the above inductive steps.
    \end{proof}

    Note that incidently, just like in \Lref{lem:all_idents_of_G_are_consequences_of_grassmann}, we also obtain
    the co-dimension sequence (the algebra $C[\e]\sg{X^{(g)},\esTr}$ is not PI, so it is not exponential and also
    not very interesting).

    \begin{cor}
        Suppose that $A$ is any $C$-algebra, and $\ef$ any linear function on it. Also suppose that the following
        is true in $A$:
        \begin{eqnarray*}
            \ef(\ef(x)y)& = & \ef(x)\ef(y)\\
            \ef(x\ef(y)) & = & \ef(x)\ef(y)\\
            {}[x,\ef{[y,z]}] & = & 0\\
            {}[\ef(x),[\ef(y),z]] & = & 0.
        \end{eqnarray*}
        Then there is some \super{}algebra $\eA$ with \super{}trace $\estr$, such that $A$ and $\eA$ have the
        same multilinear identities with linear function $\ef$ and $\estr$ respectively.
    \end{cor}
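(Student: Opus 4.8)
The plan is to realize $\eA$ as a relatively free \super{}algebra with \super{}trace built out of the identities of $A$. Let $\Gamma\triangleleft C\sg{X,\eF}$ be the ideal of identities with linear function of $(A,\ef)$. By hypothesis $\Gamma$ contains the four displayed identities, so by \Tref{thm:basis_of_identities_of_free_str} its multilinear part contains every multilinear identity with linear function of the free \super{}algebra with \super{}trace $\mathcal{F}:=C[\e]\sg{X^{(g)},\esTr}$. For each multilinear $f\in\Gamma$, in $m$ variables say, and each assignment of degrees $g_1,\dots,g_m\in\Z_2^{\oplus\N}$ to the variables, let $f^{(\vec g)}\in\mathcal{F}$ be the \super{}polynomial with the same coefficients in which the $i$-th variable sits in the homogeneous component $g_i$; this is a homogeneous element of $\mathcal{F}$. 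Let $\tilde\Gamma\triangleleft\mathcal{F}$ be the smallest two-sided ideal containing all these $f^{(\vec g)}$ that is closed under $\esTr$ and under every grading-preserving $C[\e]$-endomorphism of $\mathcal{F}$, and set $\eA:=\mathcal{F}/\tilde\Gamma$. Since $\tilde\Gamma$ is graded, $\eA$ is again a \super{}algebra, $\esTr$ descends to a \super{}trace $\estr$ on it, and the four identities of \Lref{lem:free_str_satisfies_grassmann} -- holding in $\mathcal{F}$ -- hold in the quotient $\eA$.

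The basic point is that, since $\eA$ is $\Z_2^{\oplus\N}$-graded and we work only with multilinear polynomials, a multilinear $h\in C\sg{X,\eF}$ is an identity with linear function of $(\eA,\estr)$ if and only if every graded refinement $h^{(\vec g)}$ lies in $\tilde\Gamma$. Indeed, any substitution into $\eA$ decomposes into homogeneous components, and expanding $h$ by multilinearity expresses the result as a sum of values of the various $h^{(\vec g)}$ on homogeneous elements of the prescribed degrees; such elements are images under $\mathcal{F}\to\eA$ of the free generators $x_i^{(g_i)}$, so, $\tilde\Gamma$ being stable under grading-preserving endomorphisms, each summand vanishes exactly when $h^{(\vec g)}\in\tilde\Gamma$. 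Applying this with $h\in\Gamma$, and recalling that every refinement of such $h$ was placed into $\tilde\Gamma$, we obtain that $\Gamma$ is contained in the multilinear identities with linear function of $(\eA,\estr)$.

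It remains to prove the reverse inclusion: that $\eA$ satisfies no multilinear identity with linear function outside $\Gamma$ -- equivalently that $h^{(\vec g)}\in\tilde\Gamma$ for all $\vec g$ forces $h\in\Gamma$, or that the relatively free algebra with linear function $C\sg{X,\eF}/\Gamma$ of $(A,\ef)$ embeds into $(\eA,\estr)$ by sending each variable to the class of a generic homogeneous element. This is the main obstacle. I would prove it by rerunning the proof of \Tref{thm:basis_of_identities_of_free_str}: given a multilinear identity $h$ of $(\eA,\estr)$, use the four identities (available in $\eA$) together with the linearised cyclic identities \eqref{eq:asymm_cyclic_trace_equality}, \eqref{eq:cyclic_trace_equality} to bring $h$ to the standard form of that proof, and then evaluate $h$ on the families of matrix substitutions used there -- but with the matrix entries now taken from the relatively free algebra $C\sg{X,\eF}/\Gamma$ (or a suitable enlargement of it) rather than from $C$. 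Each such substitution should factor through $(\eA,\estr)$, because the resulting \super{}algebra with \super{}trace satisfies every refinement $f^{(\vec g)}$ for $f\in\Gamma$ (as $f$ is an identity with linear function of the algebra supplying the entries); the coefficient-by-coefficient analysis of \Tref{thm:basis_of_identities_of_free_str} then forces every coefficient of the standard form of $h$ to be a consequence of $\Gamma$, i.e.\ $h\in\Gamma$. The place where genuinely new work is needed -- as opposed to transcribing \Tref{thm:basis_of_identities_of_free_str} -- is the construction of these test algebras: since $\ef$ need not vanish on commutators, the naive choice $\ef\otimes\mathrm{id}$ on $A\otimes_C\G$ is not a \super{}trace, and one must instead fabricate, from $(A,\ef)$ and a prescribed non-identity, a genuine \super{}algebra with \super{}trace that still satisfies all of $\Gamma$ yet detects that non-identity. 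Verifying that the standard-form reduction and the matrix bookkeeping survive this replacement -- in particular that the matrix algebra over $\G$ really does admit the required \super{}trace -- is the technical heart of the proof.
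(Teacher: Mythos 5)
The paper states this corollary without proof, as an immediate consequence of \Tref{thm:basis_of_identities_of_free_str}, so there is no written argument to compare against; judged on its own terms, your proposal is incomplete. The first half is sound: taking $\tilde\Gamma$ to be the $\mathrm{T}_\Sigma$-ideal of $C[\e]\sg{X^{(g)},\esTr}$ generated by all graded refinements $f^{(\vec g)}$ of multilinear $f\in\Gamma$, the quotient $\eA$ is a \super{}algebra with \super{}trace, and your decomposition-into-homogeneous-components argument correctly shows that every multilinear element of $\Gamma$ is an identity of $(\eA,\estr)$. But that inclusion is the cheap one (the zero algebra satisfies it too); the entire content of the corollary is the reverse inclusion, and you explicitly leave it as a sketch whose ``technical heart'' you acknowledge you have not carried out. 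A statement whose substantive half is deferred to ``genuinely new work'' is not a proof.

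Moreover, the route you sketch for that half is problematic, not merely unfinished. You propose to rerun the matrix substitutions of \Tref{thm:basis_of_identities_of_free_str} with entries from the relatively free algebra of $(A,\ef)$, and you yourself note that this requires fabricating a \super{}algebra with \super{}trace that satisfies all of $\Gamma$ yet violates a prescribed non-identity $h\notin\Gamma$. That fabrication \emph{is} the corollary (such an object would itself witness the claim for the pair $(\Gamma,h)$), so the plan is circular as stated; and matrix algebras $\M[n](\,\cdot\,)$ for $n>1$ will in general fail to satisfy $\Gamma$, so the coefficient-isolation bookkeeping of the theorem's proof, which relies on evaluations of identities vanishing, does not transfer. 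The non-circular way to close the gap is intrinsic: show directly that the multilinear component of $\tilde\Gamma$ of each multidegree $\bar n$ consists only of $C[\e]$-combinations of consequences of the $f^{(\vec g)}$ together with the \super{}trace axioms, reduce everything to the standard form of \Tref{thm:basis_of_identities_of_free_str}, and use the freeness of the generalized sign modules (\Tref{thm:codim_of_G}) to see that an ungraded $h$ all of whose refinements land in $\tilde\Gamma$ must already have its standard-form coefficients reduced modulo $\Gamma$. None of this appears in your write-up, and also watch the side issue your own construction raises: the degree-zero generators of $C[\e]\sg{X^{(g)},\esTr}$ already satisfy $\esTr[x,y]=0$, which need not lie in $\Gamma$, so nonzero degrees are essential in the detection step and the argument cannot be run on a single multidegree.
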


    \subsection{Concluding Remarks}

    We have seen how the structure of the generalized Grassmann algebra can be used to
    generalize the notions of superalgebras and supertraces to arbitrary characteristics and rings. In a similar
    manner, one can define a Lie \super{}algebra:

    \begin{dfn}
        Let $\eL$ be a $C[\e]$-module with a \super{}algebra grading. Suppose that
        $\{\cdot,\cdot\}$ is a bi-linear form that respects the grading (if $a\in\eL_g,b\in\eL_h$ then $\{a,b\}\in\eL_{gh}$).
        Then $\eL$ will be called a Lie \super{}algebra if for all homogenous $x,y,z\in\eL$:
        \begin{enumerate}
            \item $\{x,y\}=-\exp(\e_x\e_y)\{y,x\}$,
            \item $\exp(\e_x\e_z)\{x,\{y,z\}\}+\exp(\e_y\e_x)\{y,\{z,x\}\}+\exp(\e_z\e_y)\{z,\{x,y\}\}=0$,
            \item\label{idL3} $\{x,\{x,x\}\}=0$.
        \end{enumerate}
    \end{dfn}

    Note that \ref{idL3} is superfluous when $3$ is invertible in $C$.
    This new object is obviously equivalent to an ordinary Lie superalgebra whenever $2$ is invertible. However,
    the interesting property of this definition is that it yields non-trivial behaviour in characteristic $2$,
    where (unlike ordinary Lie superalgebras) it does not degenerate to an ordinary Lie algebra.

    \smallskip

    In this paper we only considered \super{}theory from the point of view of PI-theory. In a similar manner, one can consider all of \super{}theory in characteristic $2$. The cost we pay for this
    is that since the grading is over an infinite group, we must consider infinite-dimensional objects; therefore,
    in order to replicate the study of finite dimensional objects, one should consider \super{}objects that are
    locally finite-dimensional, in the sense that their graded components are each finite dimensional and
    isomorphic to one another in a sufficiently strong sense (so infinite-dimensional behavior is not ``hidden"
    across multiple graded components).

        One hopes
        that this construction can be used to yield characteristic-free results over arbitrary rings, such as \Tref{thm:basis_of_identities_of_free_str}.

        \bibliographystyle{alpha}
        \bibliography{./bib/PI_bib}

\medskip

    \end{document}